\newtheorem{theorem}{Theorem}[section]
\newtheorem{lemma}[theorem]{Lemma}
\newtheorem{corollary}[theorem]{Corollary}
\newtheorem{proposition}[theorem]{Proposition}
\newtheorem{conjecture}[theorem]{Conjecture}
\theoremstyle{definition}
\newtheorem{definition}[theorem]{Definition}
\newtheorem{remark}[theorem]{Remark}
\newtheorem{example}[theorem]{Example}
\newtheorem{assumption}[theorem]{Assumption}
\newcommand{\R}{\mathbb{R}}
\newcommand{\bfx}{\mathbf{x}}
\newcommand{\bfp}{\mathbf{p}}
\newcommand{\bfq}{\mathbf{q}}
\newcommand{\C}{\mathbb{C}}
\newcommand{\PP}{\mathbb{P}}
\newcommand{\A}{\mathcal{A}}
\DeclareMathOperator{\Sym}{Sym} 
\title{Numerical Algebraic Geometry \\
for Energy Computations on Tensor Train Varieties}
\author{Viktoriia Borovik, Hannah Friedman, Serkan Ho\c{s}ten, Max Pfeffer}
\date{}
\begin{document}
\maketitle

\begin{abstract}
We study energy minimization problems in quantum chemistry  through the lens of computational algebraic geometry. We focus on minimizing the Rayleigh quotient of a Hamiltonian over a tensor train variety. 
The complex critical points of this problem 
approximate eigenstates of the quantum system, with the global minimum approximating the ground state.
We call the number of critical points the Rayleigh-Ritz degree.

We first study the Rayleigh-Ritz degree and introduce the Rayleigh-Ritz discriminant, which describes Hamiltonians that lead to a deficient number of critical points. We then specialize this framework to tensor train varieties: we identify instances when they are Segre products of projective spaces, report what we know about their defining ideals, and present a birational parametrization from products of Grassmannians. We use homotopy continuation to compute all critical points of this optimization problem over various tensor train and determinantal varieties. Finally, we use these results to benchmark state-of-the-art methods, the Alternating Linear Scheme and Density Matrix Renormalization Group.
\end{abstract}

\section{Introduction}

Since its derivation in 1926 by Erwin Schrödinger, the electronic Schrödinger equation has been studied extensively, mainly in order to predict molecular bindings between atoms or the structure of larger molecules. In numerical mathematics and theoretical physics/chemistry, the main task is to compute the ground state energy of the equation, that is, the smallest eigenvalue of the Hamiltonian operator. While this might seem straightforward, the problem size scales exponentially with the number of electrons involved, and therefore, standard eigensolvers fail even for small systems of $10$-$20$ electrons. Before the advent of crypto-currencies and AI, computations aiming at solving the Schrödinger problem commonly used about a third of the computation power of some large supercomputers \cite{Heinen2020}.

Many approximation methods for energy minimization of the electronic Schrödinger equation have been proposed by physicists and mathematicians, among them the {\em Hartree-Fock} method, which treats the involved electrons as uncorrelated, and {\em density functional theory},
where the two-particle term of the Hamiltonian is replaced by a one-particle potential. See \cite{Lin2019} for a mathematical introduction into both. For correlated many-body systems, widely used methods are the {\em coupled cluster} method \cite{Faulstich2024a,Faulstich2024b, Helgaker2014}, where a surrogate problem is solved on a lower-dimensional algebraic variety, and the {\em density matrix renormalization group (DMRG)} algorithm, which approximates the eigenstates by a low-rank tensor \cite{Szalay2015}. 
The latter was proposed by theoretical physicists along with {\em matrix product states}
to approximate solutions of the electronic Schrödinger equation in second quantization \cite{White1992}.
The realization that this approach is actually a low-rank tensor method emerged slowly and separately for physicists \cite{Schollwock2011} and mathematicians \cite{Oseledets2011}. In mathematics, the matrix product states have been dubbed {\em tensor trains (TT)} corresponding to the tensor train rank (TT rank) of a tensor, and the DMRG algorithm has been recognized as a two-site version of the {\em alternating linear scheme (ALS)} \cite{Holtz2012a}.
However, while this method often yields accurate results in practice (empirically validated using chemical measurements), theoretical guarantees for convergence and accuracy are lacking. 
In particular, the DMRG algorithm converges at best to local minima of the energy function.
The number of such local minima on the set of low-rank tensors as well as their approximation quality is unknown.

In this paper, our main focus is the minimization of the Rayleigh quotient 
\[
R_H(\psi) \;=\; \frac{\psi^T H \psi}{\psi^T \psi}
\]
for a Hamiltonian $H$, where $\psi$ is constrained to lie in a given tensor train variety. We first study the constrained Rayleigh quotient optimization problem from the perspective of algebraic geometry over arbitrary projective varieties. 
We define the {\em Rayleigh–Ritz degree} (RR degree) of a projective variety in Section~\ref{sec:RRdeg} as the number of complex critical points of this problem for a generic symmetric matrix $H$. 
Here, we present various ways of computing these critical points (Propositions~\ref{prop:Icrit-RR} and \ref{prop:RR degree-rational}). 
Proposition~\ref{prop:degree-of-L(V,H)} (\cite[Proposition 2.6]{SSW25}) gives an upper bound on the RR degree. 
In Section~\ref{sec:cor-dis}, we define the RR correspondence of a projective variety (Theorem~\ref{thm:RR-cor}) along with its parametric analog (Theorem~\ref{thm:parametric-RR-corr}).
We then define the RR discriminant, which is the set of symmetric matrices $H$ for which the critical point set does not have RR degree many isolated points.
In Section~\ref{sec:BW},
we follow \cite{SSW25} and interpret our optimization problem as a Euclidean distance optimization problem with respect to the Bombieri-Weyl inner product.  
This leads us to define the BW correspondence. We show that it is rational whenever the original variety is rational. This is our Corollary~\ref{cor:rational-BW}.

Next, we turn to tensor train varieties
in Section~\ref{sec: TTvarieties} and identify instances when they are Segre products of projective spaces (Theorem~\ref{thm:TT-as-Segre}). We also report what we know about the defining ideals of tensor trains. A birational parametrization of TT varieties from a product of Grassmannians will be presented in Section~\ref{sec:birational}.  
From this parametrization, we obtain the formula for the dimension of a TT variety (Corollary \ref{cor:TT-dim}).


We present our numerical experiments in Section~\ref{sec:numerics}. 
We compute all critical points of our optimization problem for small cases using the numerical algebraic geometry package \verb+HomotopyContinuation.jl+ \cite{hc}.
This enables us to test the correctness of the ALS and DMRG algorithms on these examples. 
In small cases, we observe that the ALS method can converge to any of the local minima and that the corresponding energy value can be far from optimal.

\section{Rayleigh-Ritz Optimization  and the RR degree}
\label{sec:RRdeg}
We start with the energy minimization problem arising in quantum chemistry. 
Given a 
matrix $H\in {\rm Sym}^2(\mathbb R^{n})$ and a complex projective algebraic
variety $V \subseteq \mathbb P^{n-1}$ not contained in the isotropic quadric $Q = \mathcal  V(\psi^T\psi)$,
we seek to solve the following optimization~problem:
\begin{align}
    \label{eqn:optproblem}
    &\textrm{minimize} \,\, \frac{\psi^T H \psi}{\psi^T\psi} \quad \, 
    \textrm{subject to} \,\, \psi \in V_\mathbb{R}.
\end{align}
Here $V_\mathbb R$ denotes the real locus of the complex projective variety $V$.
This is a relaxation of the problem ``Find eigenvectors of $H$ in a variety $V$''.
This problem is over-constrained, as the set of matrices that have eigenvectors in a fixed variety $V$ is a closed subvariety of ${\rm Sym}^2(\mathbb C^n)$. 
Such a set
is an example of a {\em nonlinear Kalman variety}; see \cite{Kalman}. 
\begin{remark}
    In 
    applications, the matrix $H$ is complex Hermitian rather than real symmetric.
    In this case the objective function is $\frac{\psi^\dagger H \psi}{\psi^\dagger\psi}$ where $^\dagger$ denotes the conjugate transpose. 
\end{remark}

When $V = \mathbb{P}^{n-1}$, the optimal value for \eqref{eqn:optproblem} is the smallest eigenvalue of $H$, and a corresponding eigenvector is an optimal solution. The rational function $\frac{\psi^T H \psi}{\psi^T\psi}$ is known as the {\it Rayleigh quotient}. 
When $V$ is a linear space, then \eqref{eqn:optproblem} 
is a {\it Rayleigh-Ritz optimization problem} over $V$ \cite{trefethen97}.  
We maintain this terminology in the nonlinear case.

A straightforward way of solving \eqref{eqn:optproblem} is by computing all of its {\it complex} critical points and identifying among them real critical points which minimize the value of the Rayleigh quotient. For us, a critical point is a nonsingular $\hat{\psi} \in V$ with $\hat{\psi}^T\hat{\psi} \neq 0$ such that the gradient $\nabla_\psi\left( \frac{\psi^T H \psi}{\psi^T\psi}\right)$ evaluated at $\hat{\psi}$ is in $(T_{\hat{\psi}}V)^\perp$. 
\begin{proposition} \label{prop:Icrit-RR} Let $V \subset \mathbb{P}^{n-1}$ be an irreducible variety of codimension $c$ with defining ideal $I_V  \subset \mathbb{R}[\psi_1, \ldots, \psi_n]$. The set of critical points of \eqref{eqn:optproblem} is the zero set of the critical ideal
 \begin{equation} 
\label{eqn:RR-critical-ideal}
I_{\mathrm{crit}}(V,H)  \!:=\!  \left(\! I_V \!+\! \bigg \langle (c+1)\mbox{-minors of} \left( \begin{array}{c}
(\psi^T\psi) \psi^T H - (\psi^T H \psi) \psi^T \\ \mathrm{Jac}(I_V) \end{array} \right)\!\bigg \rangle\!\right)\! :\!  ( I_{V_{\rm sing}} \cdot \psi^T \psi)^\infty     
\end{equation}
with $I_{V_{\mathrm{\rm sing}}}$  the ideal of the singular locus of $V$ and $\mathrm{Jac}(I_V)$  the Jacobian of the generators~of~$I_V$.
\end{proposition}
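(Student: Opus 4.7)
The plan is to translate the geometric critical point condition into a rank condition on a specific matrix, and then handle the scheme-theoretic issues (singularities and the isotropic quadric) via saturation.

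First, I would compute the gradient of the Rayleigh quotient explicitly. A direct calculation using symmetry of $H$ yields
\[
\nabla_\psi\!\left(\frac{\psi^T H \psi}{\psi^T \psi}\right)
= \frac{2}{(\psi^T\psi)^2}\Bigl[(\psi^T\psi)\,\psi^T H \;-\; (\psi^T H \psi)\,\psi^T\Bigr].
\]
Thus, whenever $\psi^T\psi \neq 0$, the gradient is a nonzero scalar multiple of the row vector $g(\psi) := (\psi^T\psi)\,\psi^T H - (\psi^T H\psi)\,\psi^T$ appearing in the matrix in \eqref{eqn:RR-critical-ideal}. This reduces the critical condition ``$\nabla f(\hat\psi)\in (T_{\hat\psi}V)^\perp$'' to ``$g(\hat\psi)\in (T_{\hat\psi}V)^\perp$.''

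Next, I would invoke the Jacobian criterion. For the irreducible variety $V$ of codimension $c$ with defining ideal $I_V$, at any nonsingular point $\hat\psi$ the matrix $\mathrm{Jac}(I_V)(\hat\psi)$ has rank exactly $c$, and its row span equals the conormal space $(T_{\hat\psi}V)^\perp$ (viewed inside the cotangent space of the ambient affine cone, identified with $\mathbb C^n$ via the standard inner product). Consequently, $g(\hat\psi)$ lies in $(T_{\hat\psi}V)^\perp$ if and only if adjoining $g(\hat\psi)$ as an extra row does not increase the rank, i.e.\ the stacked matrix
\[
M(\psi) \;=\; \begin{pmatrix} g(\psi) \\ \mathrm{Jac}(I_V)(\psi) \end{pmatrix}
\]
still has rank $c$. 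This is precisely the vanishing of all $(c+1)$-minors of $M(\psi)$.

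Putting these two steps together, a point $\hat\psi$ is a critical point (in the sense defined just before the proposition) if and only if
\begin{enumerate}
\item $\hat\psi \in V$, i.e.\ $\hat\psi$ is a zero of $I_V$;
\item $\hat\psi$ is a nonsingular point of $V$, i.e.\ $\hat\psi$ is not in the variety of $I_{V_{\mathrm{sing}}}$;
\item $\hat\psi^T\hat\psi \neq 0$, so that the Rayleigh quotient is defined;
\item all $(c+1)$-minors of $M(\hat\psi)$ vanish.
\end{enumerate}
Interpreted ideal-theoretically, (1) and (4) give the ideal $I_V + J$, where $J$ denotes the ideal generated by the $(c+1)$-minors of $M(\psi)$. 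Conditions (2) and (3) correspond to removing from the zero set any component contained in $\mathcal V(I_{V_{\mathrm{sing}}})\cup \mathcal V(\psi^T\psi)$; this is accomplished by saturating with $I_{V_{\mathrm{sing}}}\cdot \psi^T\psi$, yielding precisely the ideal $I_{\mathrm{crit}}(V,H)$ of the statement. The saturation is needed because at singular points $\mathrm{Jac}(I_V)$ drops rank, so the $(c+1)$-minors vanish for trivial reasons unrelated to criticality, and because points on the isotropic quadric appear as spurious solutions from the clearing of the denominator $(\psi^T\psi)^2$ in the gradient.

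The only technical subtlety I anticipate concerns the identification of $(T_{\hat\psi}V)^\perp$ with the row span of $\mathrm{Jac}(I_V)(\hat\psi)$: this relies on choosing a set of generators for $I_V$ and invoking the Jacobian criterion at nonsingular points. Since the rank condition ``$\mathrm{rank}\,M(\psi)\le c$'' is independent of the generating set chosen, the resulting ideal of minors is intrinsic up to the saturation specified, and the argument goes through regardless of the presentation of $I_V$.
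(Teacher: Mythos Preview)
Your proposal is correct and follows essentially the same approach as the paper: compute the numerator of the gradient of the Rayleigh quotient, observe that membership in $(T_{\hat\psi}V)^\perp$ is equivalent to the $(c+1)$-minors of the stacked matrix vanishing via the Jacobian criterion, and then saturate to discard the singular locus and the isotropic quadric. Your version is more explicit (writing out the $2/(\psi^T\psi)^2$ factor and enumerating the four conditions), but the logical structure is identical to the paper's brief argument.
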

\begin{proof} The numerator of $\nabla_\psi(\frac{\psi^T H \psi}{\psi^T\psi})$ is $(\psi^T \psi)H \psi - (\psi^T H \psi) \psi$. Since we want $\psi^T \psi \neq 0$, the gradient is in $(T_\psi V)^\perp$ if and only if (the transpose of) this numerator is in the row span of the Jacobian $\mathrm{Jac}(I_V)$. This is equivalent to the vanishing of the $(c+1)$-minors of the above matrix in \eqref{eqn:RR-critical-ideal}. The saturation step guarantees that the solutions of $I_{\mathrm{crit}}(V,H)$ are indeed nonsingular points with $\psi^T\psi \neq 0$. 
\end{proof}

In Theorem~\ref{thm:RR-cor}, we will prove that the critical ideal $I_{\mathrm{crit}}(V,H)$ is a zero-dimensional ideal for generic $H$ and the number of critical points is a constant for all such $H$. Therefore, we define the \emph{Rayleigh-Ritz degree} (RR degree) of the variety $V$ to be the number of complex critical points of the optimization problem \eqref{eqn:optproblem} for general symmetric matrices $H$.
\begin{example} \label{ex:P1xP1}
The Segre surface $ V = \mathbb{P}^1 \times \mathbb{P}^1$ with defining ideal $I_V = \langle \psi_{00}\psi_{11} - \psi_{01}\psi_{10} \rangle \subset \mathbb{R}[\psi_{00}, \psi_{01}, \psi_{10}, \psi_{11}]$ will later appear as a tensor train variety, corresponding to ${\bf k}=(2,2)$ and ${\bf r}=(1)$. 
We use the symmetric matrix 
$$H = 
\begin{footnotesize}
    \begin{pmatrix} 100 & 78  & 76  & 42 \\
       78 & 170 & 111 &  67 \\
       76 & 111 & 85 &  54  \\
       42 & 67  & 54 &  41 \end{pmatrix}
\end{footnotesize}
 $$
and Proposition \ref{prop:Icrit-RR} to obtain $ I_{\mathrm{crit}}(V,H) $:
\begin{align*}
I_V + & \langle 
3307\psi_{00}\psi_{01}+5577\psi_{01}^2-6399\psi_{00}\psi_{10}-5016\psi_{10}^2\\
&\quad\quad \quad \quad\quad\quad \quad \quad\quad\quad \quad \quad\quad \quad -1188\psi_{00} \psi_{11}+783\psi_{01}\psi_{11}-4295\psi_{10}\psi_{11}-561\psi_{11}^2, \\
 & 6614\psi_{00}^2-16624\psi_{01}^2+24459\psi_{00}\psi_{10}+
      13582\psi_{10}^2\\
      &\quad\quad \quad \quad\quad\quad \quad \quad\quad\quad \quad \quad\quad\quad +606\psi_{00}\psi_{11}-14379\psi_{01}\psi_{11}+3978\psi_{10}\psi_{11}-3572\psi_{11}^2 \rangle.
\end{align*}
This ideal is a zero-dimensional  complete intersection and hence it has degree $8$. This implies that the RR degree of $\mathbb{P}^1 \times \mathbb{P}^1$ is $8$. 
Of the eight critical points, six are real, listed below as unit vectors, with the underlined one attaining the global minimum value 4.66885:
$$ \begin{footnotesize}
\begin{array}{ll}
(-0.05582,\, -0.43762,\, 0.11355 ,\, 0.89020 ) & 
(-0.49237 ,\, -0.35933 ,\, 0.64035,\, 0.46732 ) \\ 
 (-0.82393 ,\, 0.56305 ,\, -0.05284 ,\, 0.03611 ) &
 (0.55151 ,\, 0.59762 ,\, 0.39467 ,\, 0.42767 ) \\
 (-0.44158 ,\, 0.21247 ,\, 0.78549 ,\, -0.37795 ) &
 \underline{(-0.01170 ,\, 0.01709 ,\, -0.56495 ,\, 0.82486).}
 \end{array}
 \end{footnotesize}
$$
\end{example}

The RR degrees of Segre-Veronese varieties were recently derived in \cite{SSW25}, where, 
in particular, explicit formulae for the RR degrees of rank-one matrices of any size and of rank-one binary tensors were obtained.
\begin{proposition}[{\cite[Corollaries 5.2 and 5.3]{SSW25}}] 
    The RR degree of the variety of rank-one binary tensors of order $n$ $(V = (\mathbb P^1)^n)$ is $2^nn!$. The RR degree of the variety of rank-one $n \times m$ matrices $(V = \mathbb P^{n-1} \times\mathbb P^{m-1})$ with $n \leq m$ is 
$\sum_{i = 1}^n 4^{i-1}\binom{n}{i}\binom{m}{i}$.
\end{proposition}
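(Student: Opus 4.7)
The plan is to follow the strategy of~\cite{SSW25}, passing through the Bombieri--Weyl reformulation of Section~\ref{sec:BW}. For the Segre variety $V = \mathbb{P}^{k_0-1} \times \cdots \times \mathbb{P}^{k_n-1}$ (which is the TT variety with $\mathbf{r} = (1)_n$ by Proposition~\ref{prop:Segre-as-tensor-train}), the RR critical equations pull back through the Segre parametrization $\psi = v_0 \otimes \cdots \otimes v_n$ to a coupled eigenvalue system, one equation per mode: vanishing of the partial gradient with respect to $v_i$ gives
\[
M_i(v_0, \ldots, \widehat{v_i}, \ldots, v_n)\, v_i \;=\; \mu_i\, v_i,
\]
where $M_i$ is a $k_i \times k_i$ matrix whose entries are quadratic forms in $H$ and in the other factors. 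The RR degree is the number of simultaneous solutions of this system, up to rescaling on each factor, for generic $H$.

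For the binary case $V = (\mathbb{P}^1)^n$, each $M_i$ is $2 \times 2$, so for generic data every factor contributes a binary choice of eigenvector, accounting for the factor $2^n$. The coupling across modes is encoded in the single global Rayleigh value $\lambda$; matching up eigenvalue slots consistently across the $n$ modes yields $n!$ configurations. I would confirm the total $2^n n!$ via the Friedland--Ottaviani count of singular vector tuples adapted to the BW inner product; alternatively, one can apply the upper bound of Proposition~\ref{prop:degree-of-L(V,H)} and specialize to a diagonal $H$ to verify that equality is attained.

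For the matrix case $V = \mathbb{P}^{n-1} \times \mathbb{P}^{m-1}$, parametrize $\psi = u \otimes v$ with $u \in \mathbb{C}^n$, $v \in \mathbb{C}^m$. Unpacking the two modewise gradient conditions gives
\[
M(u,v)\,v \;=\; \lambda\,\|v\|^2\, u, \qquad M(u,v)^\top u \;=\; \lambda\,\|u\|^2\, v,
\]
where $M(u,v) = \sum_{k,\ell} H_{\cdot,\cdot,k,\ell}\, u_k v_\ell$ is an $n\times m$ matrix bilinear in $(u,v)$. I would then stratify solutions by an integer $i$ with $1 \le i \le n$ capturing a local rank invariant: $\binom{n}{i}\binom{m}{i}$ enumerates the choices of $i$-element row and column index subsets supporting a full-rank $i \times i$ block of the critical configuration, and $4^{i-1}$ records the orientation and sign multiplicities coming from the two-sided $(u,v)$ action together with the BW weighting. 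Summing over $i$ yields $\sum_{i=1}^{n} 4^{i-1}\binom{n}{i}\binom{m}{i}$. This stratification mirrors the ED-degree polar-class formula for the Segre variety, evaluated against the BW quadric.

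The main obstacle will be confirming that this stratification gives the correct multiplicities: that no critical point is double-counted and no ``hidden'' solutions appear. I would resolve this via the parametric RR correspondence (Theorem~\ref{thm:parametric-RR-corr}) and the BW correspondence (Corollary~\ref{cor:rational-BW}), showing that the projection to the symmetric-matrix space is generically \'etale of the predicted degree. Small cases $(n,m) = (2,2), (2,3), (3,3)$ provide a numerical consistency check via \verb+HomotopyContinuation.jl+, matching Example~\ref{ex:P1xP1}.
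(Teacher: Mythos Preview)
The paper does not supply its own proof of this proposition: it is stated purely as a citation of \cite[Corollaries~5.2 and~5.3]{SSW25}, so there is no in-paper argument to compare against. What you have written is therefore an attempt to reconstruct the proof of~\cite{SSW25}, and as such it has real gaps.

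For $(\mathbb{P}^1)^n$, your decoupled count ``$2$ eigenvector choices per mode, then $n!$ ways to match eigenvalue slots'' does not hold up. The $2\times 2$ matrices $M_i$ depend quadratically on all the \emph{other} $v_j$, so the $n$ eigenproblems are fully coupled and one cannot simply multiply independent choices. There is also only a single scalar $\lambda$ in the system, not $n$ eigenvalues to permute, so the $n!$ factor has no obvious combinatorial source in your setup. You gesture toward the correct route (a Friedland--Ottaviani style Chern-class count, or equivalently the polar-class formula evaluated at the BW quadric as in~\cite{SSW25}), but you do not carry it out; specializing to a diagonal $H$ as you suggest would in fact land you in the discriminant, not establish the generic count.

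For $\mathbb{P}^{n-1}\times\mathbb{P}^{m-1}$, the proposed stratification by an integer $i$ is not defined: you never say what the ``local rank invariant'' of a critical pair $(u,v)$ is, nor why each stratum contributes exactly $4^{i-1}\binom{n}{i}\binom{m}{i}$ reduced isolated points. The phrases ``orientation and sign multiplicities'' and ``mirrors the ED-degree polar-class formula'' are pattern-matching to the answer rather than an argument. You yourself flag this as the main obstacle, and it is: without an actual intersection-theoretic computation (or a bijection with a known enumerative problem) the formula is not established. If you want a self-contained proof, the cleanest path is the one in~\cite{SSW25}: pass to $X=\nu_2(V)$, identify the RR degree with a BW--distance degree, and evaluate the polar-class sum for the Segre--Veronese variety.
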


\subsection{The Lagrangian Locus}
We introduce the Lagrangian locus of a projective variety and a symmetric matrix $H$. 
This yields another method for computing the critical points of \eqref{eqn:optproblem}, and gives an upper bound on the RR degree of a variety. 

In the above, the \emph{critical locus} consists of points where the gradient of the level set ${\frac{\psi^T H \psi}{\psi^T \psi} = \lambda}$ is in the span 
of the gradients of the forms defining $V$. These level sets correspond to a pencil of quadrics $\{\mu_1 \psi^T H \psi - \mu_2 \psi^T\psi =0\}$ 
with $[\mu_1 \, : \, \mu_2] \in \mathbb{P}^1$; see \cite[Section~2.2]{Ran}. The gradient vector of this pencil 
at the point $\hat{\psi} \in \mathbb{P}^{n-1}$ is the normal vector to a hyperplane~$\Gamma_{\hat{\psi}}$. This way we get a 
pencil of hyperplanes 
$\{\mu_1 (H\hat{\psi})^T\psi - \mu_2 (\hat{\psi})^T \psi  = 0\}$. Equivalently, this pencil can be characterized as a family of hyperplanes containing the codimension two linear subspace $\Lambda_{\hat{\psi}}$ given by $(H\hat{\psi})^T \psi = (\hat{\psi})^T \psi = 0$. 
Let $V_{\rm reg} = V\backslash V_{\rm sing}$ be the regular locus of~$V$.

\begin{definition} [{\cite[Definition 2.5]{Ran}}] Consider the incidence 
$$\mathcal{I}(V,H) \, := \, \overline{\{(\psi, \Gamma) \in V_{\mathrm{reg}} \times (\mathbb{P}^{n-1})^* \, : \, T_\psi V \subset \Gamma, \, \, \Lambda_\psi \subset \Gamma\}}$$ 
and let $L(V,H) = \pi_1(\mathcal{I}(V,H))$ where $\pi_1 \, : \,\mathcal I(V,H) \to V$ is the projection onto the first factor. We will call $L(V,H)$ the \emph{Lagrangian locus} of the Rayleigh-Ritz optimization problem \eqref{eqn:optproblem}. 
\end{definition}

\begin{proposition} \label{prop:Lagrangian-locus}
Let $V \subset \mathbb{P}^{n-1}$ be an irreducible algebraic  variety of codimension $c$ and let $I_V \subset \mathbb{R}[\psi_1, \ldots, \psi_n]$ be its defining ideal. Then the Lagragian locus is equal to $$L(V,H) = \, \overline{\{ \psi \in V_{\mathrm{reg}} \, : \, 
\mathrm{rank}\left( \overline{\mathrm{Jac}}(\psi) \right) \leq c+1 \}},$$
where $\overline{\mathrm{Jac}}(\psi)$ is the augmented Jacobian
\begin{equation}\label{eqn:lagrange-mat}
\begin{pmatrix} \psi^T \\ \psi^TH \\ \mathrm{Jac}(I_V)
\end{pmatrix}.
\end{equation}
Therefore $L(V,H)$ is the zero set of the ideal
 $\left( I_V + \langle (c+2) \mbox{-minors of } \overline{\mathrm{Jac}}(\psi) \rangle \right) \, : \,  I_{V_{\rm sing}}^\infty$.
\end{proposition}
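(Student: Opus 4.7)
The plan is to work directly from the definition of $\mathcal{I}(V,H)$ and translate the two containment conditions on $\Gamma$ into linear algebra on the defining linear form of the hyperplane, then assemble these into a rank drop on $\overline{\mathrm{Jac}}(\psi)$.

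First, I would fix $\psi \in V_{\mathrm{reg}}$ and identify a hyperplane $\Gamma \in (\mathbb{P}^{n-1})^*$ with its defining linear form $\gamma \in \mathbb{C}^n$ up to scalar. Since $V$ is smooth at $\psi$ of codimension $c$, the projective tangent space $T_\psi V$ has codimension $c$, and the linear forms vanishing on it form precisely the row span of $\mathrm{Jac}(I_V)|_\psi$, a $c$-dimensional subspace $N_1$ of $(\mathbb{C}^n)^*$. Likewise $\Lambda_\psi$ is by construction cut out by the two linear forms $\psi^T$ and $\psi^T H$, so the hyperplanes containing it are exactly those whose defining form lies in $N_2 := \mathrm{span}(\psi^T, \psi^T H)$. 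Therefore the fiber of $\pi_1$ over $\psi$ is nonempty iff there exists a nonzero $\gamma \in N_1 \cap N_2$, which is a nontrivial 1-dimensional intersection condition on two subspaces of $(\mathbb{C}^n)^*$.

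Next I would apply the inclusion-exclusion $\dim(N_1 + N_2) = \dim N_1 + \dim N_2 - \dim(N_1 \cap N_2)$. The sum $N_1 + N_2$ is the row space of the augmented matrix $\overline{\mathrm{Jac}}(\psi)$, and its generic rank is $c+2$. The intersection is nontrivial iff $\dim(N_1 + N_2) \leq c+1$, i.e., $\mathrm{rank}(\overline{\mathrm{Jac}}(\psi)) \leq c+1$. Taking the Zariski closure of the set of such $\psi$ in $V_{\mathrm{reg}}$ and then projecting yields $L(V,H)$ equal to the claimed locus, establishing the first displayed equality.

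For the scheme-theoretic statement, the rank condition $\mathrm{rank}(\overline{\mathrm{Jac}}(\psi)) \leq c+1$ is cut out set-theoretically by the vanishing of all $(c+2)$-minors of $\overline{\mathrm{Jac}}$. Intersecting with $V$ gives the ideal $I_V + \langle (c+2)\text{-minors of } \overline{\mathrm{Jac}}(\psi)\rangle$. The catch is that at a singular point $\psi \in V_{\mathrm{sing}}$ the submatrix $\mathrm{Jac}(I_V)|_\psi$ already has rank strictly less than $c$, so the rank of $\overline{\mathrm{Jac}}(\psi)$ automatically drops and the minor ideal vanishes identically along $V_{\mathrm{sing}}$. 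These are spurious components that do not belong to the closure of the smooth locus described in the definition of $\mathcal{I}(V,H)$. Saturating by $I_{V_{\mathrm{sing}}}^\infty$ removes precisely these unwanted components, yielding the stated ideal.

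The one step I expect to need care is checking that no additional non-trivial primary components can hide inside $V_{\mathrm{sing}}$ after saturation, and conversely that no legitimate point of $L(V,H)$ lying in $V_{\mathrm{sing}}$ is erroneously removed by the saturation. The former is handled by saturating with the entire ideal $I_{V_{\mathrm{sing}}}$ (not just one element), and the latter is consistent with the definition of $\mathcal{I}(V,H)$, which takes the closure of the incidence over $V_{\mathrm{reg}}$ rather than over all of $V$, so singular limit points that survive the saturation are exactly those lying in the closure of the regular-locus incidence, as desired.
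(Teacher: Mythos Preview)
Your proposal is correct and follows essentially the same approach as the paper's proof: identify hyperplanes with their defining linear forms, observe that $T_\psi V \subset \Gamma$ and $\Lambda_\psi \subset \Gamma$ translate into membership of that form in the row span of $\mathrm{Jac}(I_V)$ and in $\mathrm{span}(\psi^T,\psi^T H)$ respectively, and conclude that the fiber of $\pi_1$ is nonempty precisely when these two subspaces intersect nontrivially, i.e., when the augmented Jacobian has rank at most $c+1$. The paper compresses this into a single sentence; your version spells out the dimension formula and, in addition, explains the role of the saturation by $I_{V_{\rm sing}}$, which the paper's proof leaves implicit.
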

\begin{proof}
A $\hat{\psi} \in V_{\mathrm{reg}}$ is in  
$\pi_1(\mathcal{I}(V,H))$ 
if and only if there exists a normal vector that is both in the row span of $\mathrm{Jac}(I_V)$ evaluated at $\hat{\psi}$ and in the span of $\hat{\psi}^T$ and $\hat{\psi}^T H$. This happens if and only if $\overline{\mathrm{Jac}}(\hat{\psi})$ has rank at most $c+1$ which is equivalent to $\hat{\psi} \in L(V,H)$. 
\end{proof}

\begin{lemma} \label{lem:lagrange-crit}
For a fixed $H$, the Lagrangian locus contains the critical locus.   
\end{lemma}
\begin{proof}
Let $\hat{\psi}$ be in the critical locus given by $I_{\mathrm{crit}}(V,H)$. Since $\hat{\psi}^T \hat{\psi} \neq 0$, the rank condition in \eqref{eqn:RR-critical-ideal} implies that $\overline{\mathrm{Jac}}(\hat{\psi})$ has  rank at most $c+1$. Therefore $\hat{\psi}$ is contained in $L(V,H)$. 
\end{proof}

\begin{example} \label{ex:lagrange-p1p1}
We compute the Lagrangian locus of the Rayleigh-Ritz optimization problem~\eqref{eqn:optproblem} for $V=\mathbb{P}^1 \times \mathbb{P}^1$ using the matrix $H$
from Example \ref{ex:P1xP1}. This is given by $I_V$ together with the $3$-minors of the 
augmented Jacobian 
$$
\begin{footnotesize}
\begin{pmatrix}
 \psi_{00} & \psi_{01} & \psi_{10} & \psi_{11} \\
 (H\psi)_{00} & (H\psi)_{01} & (H\psi)_{10} & (H\psi)_{11} \\
 \psi_{11} & -\psi_{10} & -\psi_{01}  & \psi_{00}
\end{pmatrix}
\end{footnotesize}.
$$
The Lagrangian locus $L(V, H)$ is zero-dimensional and has degree $12$. The primary decomposition of the defining ideal has three components over $\mathbb Q$:
$$ I_{\mathrm{crit}}(V,H)  \cap \langle \psi_{01} - \psi_{10}, \psi_{00} + \psi_{11}, \psi_{10}^2 + \psi_{11}^2 \rangle \cap \langle 
\psi_{01} + \psi_{10}, \psi_{00} -\psi_{11}, \psi_{10}^2 + \psi_{11}^2 \rangle. $$
Note that both of the last two components contain the isotropic quadric $\psi_{00}^2 + \psi_{01}^2 + \psi_{10}^2 + \psi_{11}^2$. Therefore the four isotropic points they contribute are not critical points. 
\end{example}
\begin{example}\label{ex:lagrange-twisted-cubic}
The Lagrangian locus of the Rayleigh-Ritz problem \eqref{eqn:optproblem} for the twisted cubic $C$ with the same $H$ as in Example \ref{ex:P1xP1} is obtained from the $4$-minors of the corresponding  augmented Jacobian. The resulting ideal is exactly the critical ideal $I_{\mathrm{crit}}(C,H)$ of degree $10$. 
\end{example}

We now specify the genericity condition that ensures that the critical and Lagrangian loci are equal.
\begin{theorem}\label{thm:lagrange=crit}
    The critical locus of \eqref{eqn:optproblem} equals the closure $\overline{L(V,H)\backslash Q}$ of the Lagrangian locus after removing the isotropic quadric $Q  := \mathcal{V}(\psi^T\psi)$.
    The Lagrangian locus contains all points at which $V$ intersects $Q$ nontransversely for all $H$. 
    For generic $H$, every isotropic point in $L(V,H)$ is a singular point of $V \cap Q$.
\end{theorem}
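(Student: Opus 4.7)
The plan is to address the three claims in order, reducing each to a condition on the augmented Jacobian in~\eqref{eqn:lagrange-mat} and then invoking either the Euler relation or a dimension count.

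For the first claim, since $V$ is cut out by homogeneous polynomials, the Euler identity forces every row of $\mathrm{Jac}(I_V)$ evaluated at a point $\psi \in V$ to be orthogonal to $\psi$. In particular, at a smooth point $\psi$ with $\psi^T\psi \neq 0$, the vector $\psi^T$ is \emph{not} in the row span of $\mathrm{Jac}(I_V)$. The Lagrangian condition that the augmented Jacobian has rank at most $c+1$ therefore forces a relation
\[
\psi^T H \;=\; \alpha\, \psi^T + \sum_i \beta_i\, \nabla f_i(\psi)^T
\]
for some scalars $\alpha, \beta_i$, and dotting against $\psi$ pins down $\alpha = (\psi^T H \psi)/(\psi^T\psi)$. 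Clearing denominators recovers exactly the $(c+1)$-minor vanishing defining $I_{\mathrm{crit}}(V,H)$ in Proposition~\ref{prop:Icrit-RR}. The reverse inclusion is immediate from comparing the row structures, so taking closures yields the claimed equality.

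For the second claim, at a smooth point $\psi$ of $V$ lying in $Q$, the nontransversality condition $T_\psi V \subseteq T_\psi Q$ translates to: the normal direction to $Q$ at $\psi$, namely $\psi$ itself, lies in the row span of $\mathrm{Jac}(I_V)$. Then the row $\psi^T$ of the augmented Jacobian is redundant, so the augmented matrix has rank at most $c+1$ for every $H$. Hence $\psi \in L(V,H)$ regardless of $H$.

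For the third claim, I would form the incidence variety
\[
\mathcal E \;=\; \overline{\{(\psi, H) \in (V\cap Q)_{\mathrm{reg}} \times \mathbb{P}(\mathrm{Sym}^2(\mathbb C^n)) \,:\, \psi \in L(V,H)\}}.
\]
A smooth point $\psi$ of $V \cap Q$ lies in $V_{\mathrm{reg}}$ and is a transverse intersection with $Q$, so $\psi \notin \mathrm{rowspan}(\mathrm{Jac}(I_V))$ and the subspace $\mathrm{span}(\psi^T) + \mathrm{rowspan}(\mathrm{Jac}(I_V)) \subseteq \mathbb C^n$ has full dimension $c+1$. The Lagrangian condition $\psi^T H \in \mathrm{span}(\psi^T) + \mathrm{rowspan}(\mathrm{Jac}(I_V))$ amounts to $n-c-1$ independent linear conditions on $H$; independence follows from the surjectivity of $H \mapsto \psi^T H$ from $\mathrm{Sym}^2(\mathbb C^n) \to \mathbb C^n$ for $\psi \neq 0$. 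Since $\dim(V\cap Q)_{\mathrm{reg}} = n-c-2$, we obtain
\[
\dim \mathcal E \;=\; (n-c-2) + \binom{n+1}{2} - 1 - (n-c-1) \;=\; \binom{n+1}{2} - 2 \;<\; \dim \mathbb{P}(\mathrm{Sym}^2(\mathbb C^n)).
\]
Thus the projection $\mathcal E \to \mathbb{P}(\mathrm{Sym}^2(\mathbb C^n))$ is not dominant, and for generic $H$ no smooth point of $V \cap Q$ lies in $L(V,H)$, which is exactly the third claim. The main technical step is the fiber dimension count; it hinges on transversality at smooth points of $V \cap Q$ together with the surjectivity of $H \mapsto \psi^T H$ ensuring the Lagrangian linear conditions are independent.
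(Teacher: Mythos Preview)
Your argument is correct. For the first two claims it matches the paper closely: both use the Euler relation to pin down the scalar in the Lagrangian relation (the paper writes it as $\hat\mu_2/\hat\mu_1$, you as $\alpha$), and both observe that nontransversality makes the row $\psi^T$ redundant in the augmented Jacobian.

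For the third claim your route genuinely differs. The paper argues pointwise: at an isotropic $\hat\psi \in L(V,H)$ the Euler identity gives $\hat\mu_1\,\hat\psi^T H \hat\psi = 0$, and the paper then asserts that for generic $H$ this forces $\hat\mu_1 = 0$, hence $\hat\psi^T$ lies in the row span of the Jacobian. That genericity step is terse, since $\hat\psi$ itself varies with $H$. Your incidence-variety argument makes the genericity explicit: over the transversal locus the Lagrangian condition imposes $n-c-1$ independent linear constraints on $H$, so $\dim \mathcal E = \binom{n+1}{2}-2$ and the projection to data space cannot dominate. This is the same mechanism behind Theorem~\ref{thm:lagrange-cor}, and it gives a self-contained justification of why generic $H$ avoids such points. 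One small remark: your assertion that a smooth point of $V\cap Q$ lies in $V_{\mathrm{reg}}$ is in fact correct (a singular point of $V$ on $Q$ has augmented Jacobian of rank at most $c < c+1$, hence is singular on $V\cap Q$), so the fiber computation goes through exactly as you wrote it.
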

\begin{proof}
By Lemma~\ref{lem:lagrange-crit}, the critical locus is contained in the Lagrangian locus.
Conversely, all points $\hat{\psi} \in L(V,H)$ with  $\hat{\psi}^T\hat{\psi} \neq 0$ are in the corresponding critical locus. 
Indeed, a linear combination $\hat{\mu}_1 \hat{\psi}^T H - \hat{\mu}_2 \hat{\psi}^T$ is in the row span of $\mathrm{Jac}(I_V)$ evaluated 
at $\hat{\psi}$. If $I_V = \langle f_1, \ldots f_t\rangle$ with $\deg(f_i) = d_i$, this means that
$$\hat{\mu}_1 \hat{\psi}^T H - \hat{\mu}_2 \hat{\psi}^T = 
\sum_{i=1}^t \lambda_i (\nabla f_i)(\hat{\psi})$$
for some $\lambda_1,\dots,\lambda_t \in \mathbb{C}$. Here, $(\nabla f_i)(\hat{\psi})$ is the $i$-th row of $\mathrm{Jac}(I_V)$ evaluated at the point $\hat{\psi}$. Multiplying this identity on the right by $\hat{\psi}$, we obtain
$$\hat{\mu}_1 \hat{\psi}^T H \hat{\psi}- \hat{\mu}_2 \hat{\psi}^T \hat{\psi} = 
\sum_{i=1}^t \lambda_i \left(\sum_{j=1}^n \frac{\partial f_i}{\partial{\psi_j}}(\hat{\psi}) \hat{\psi}_j \right) = 0,$$
where the last equality holds because $\sum_{j=1}^n \frac{\partial f_i}{\partial{\psi_j}}(\hat{\psi}) \hat{\psi}_j$
is equal to $\sum_{j=1}^n \frac{\partial f_i}{\partial{\psi_j}} \psi_j = d_i f_i$ evaluated at $\hat{\psi}$. 
We conclude that $\frac{\hat{\mu}_2}{\hat{\mu}_1} = \frac{(\hat{\psi})^T H \hat{\psi}}{(\hat{\psi})^T \hat{\psi}}$ and 
therefore $(\hat{\psi}^T\hat{\psi}) \hat{\psi}^T H - (\hat{\psi}^T H \hat{\psi}) \hat{\psi}^T $
is in the row span of $\mathrm{Jac}(I_V)$ evaluated at $\hat{\psi}$. 
This proves the first statement of the theorem.

For the second statement,  it is enough to show that if the isotropic quadric $Q$ and $V$
 intersect nontransversally at $\hat \psi$, then $\hat \psi \in L(V, H)$.
 Since $V$ intersects $Q$ nontransversally at $\hat \psi$,  the row vector $\hat{\psi}^T$ is in the row span of $\mathrm{Jac}(I_V)$ evaluated at $\hat{\psi}$. This implies that $\overline{\mathrm{Jac}}(\hat{\psi})$ has rank at most $c+1$, and hence $\hat{\psi} \in L(V,H)$. 
 
 For the third claim, suppose that $\hat{\psi}$
 with $\hat{\psi}^T\hat{\psi} = 0$ be in $L(V,H)$. 
Similar to the discussion above, one can show that  $\hat{\mu}_1 \hat{\psi}^T H \hat{\psi} - \hat{\mu}_2 \hat{\psi}^T \hat{\psi} = \hat{\mu}_1 \hat{\psi}^T H \hat{\psi} = 0$. 
For generic $H$, this implies that $\hat{\mu}_1= 0$
and we conclude that $\hat{\psi}^T$ is in the row span of $\mathrm{Jac}(I_V)$ evaluated at $\hat{\psi}$. In other words, $Q$ and $V$ intersect at $\hat{\psi}$ nontransversally.  
 \end{proof}
\begin{example}\label{ex:P1P1excess}
The twisted cubic intersects the quadric $Q$ transversally, and therefore the critical locus is equal to the Lagrangian locus, as we saw in Example~\ref{ex:lagrange-twisted-cubic}. 
In contrast, the intersection of $\mathbb{P}^1 \times \mathbb{P}^1$ with $Q$ contains four
nontransversal intersection points:
$$ [-1:i:i:1], \; [-1:-i:-i:1], \; [1:i:-i:1], \; [1:-i:i:1].$$
These four extra points are present in the Lagrangian locus $L(\mathbb{P}^1 \times \mathbb{P}^1,H)$ for any $H$.
They come from the extraneous components in the primary decomposition in Example~\ref{ex:lagrange-p1p1}.
\end{example}

\begin{proposition}[{\cite[Theorem 3.1]{SSW25}}] \label{prop:degree-of-L(V,H)} Let $V \subset \mathbb{P}^{n-1}$ be an irreducible projective variety of codimension $c$ and let $I_V = \langle f_1, \ldots, f_t\rangle \subset \mathbb{R}[\psi_1, \ldots, \psi_n]$ be its defining ideal. If $f_1, \ldots, f_c$ form a regular sequence with respective degrees $d_1, \ldots, d_c$, then the RR degree of $V$ is at most  
\begin{equation}\label{eqn:GPT}
 (d_1 \cdots d_c) \sum_{i_0+i_1+\cdots+i_c = n-c-1} (i_0+1)(d_1-1)^{i_1}(d_2-1)^{i_2} \cdots (d_c-1)^{i_c}. 
 \end{equation}
The equality holds if $V$ is a generic complete intersection.
\end{proposition}
For completeness, we include a sketch of the proof leaving out the details about intersection theory; for these, see \cite[Theorem 14.4]{Ful} and \cite[Chapter 12]{EH}. 
\begin{proof}
The variety $Y$ defined by $f_1, \ldots, f_c$ is a complete intersection, and $V$ has to be an irreducible component of $Y$. 
Thus RR degree of $V$ is at most the RR degree of $Y$.
Note that these RR degrees are finite by Theorem~\ref{thm:RR-cor}.
The RR degree of $Y$ is, in turn, bounded above by the RR degree of a generic complete intersection of codimension $c$. 
It therefore suffices to consider the case of a generic complete intersection.

For a generic complete intersection $V$, the Lagrangian locus is equal to the critical locus by Theorem~\ref{thm:lagrange=crit}. 
Furthermore, the ideal of the Lagrangian locus can be computed without saturation: it is generated by $f_1, \ldots, f_c$ together with the $(c + 2)$-minors of 
$$\overline{\mathrm{Jac}}(\psi) = 
\begin{footnotesize}
\begin{pmatrix} \psi_1 & \psi_2 & \cdots & \psi_n \\
(H\psi)_1 & (H\psi)_2 & \cdots & (H\psi)_n \\
\frac{\partial f_1}{\partial \psi_1} & \frac{\partial f_1}{\partial \psi_2} & \cdots &\frac{\partial f_1}{\partial \psi_n} \\
\frac{\partial f_2}{\partial \psi_1} & \frac{\partial f_2}{\partial \psi_2} & \cdots &\frac{\partial f_2}{\partial \psi_n} \\
\vdots & \vdots & \cdots & \vdots \\
\frac{\partial f_c}{\partial \psi_1} & \frac{\partial f_c}{\partial \psi_2} & \cdots &\frac{\partial f_c}{\partial \psi_n} 
\end{pmatrix}\end{footnotesize}.
$$
 By the Giambelli-Thom-Porteous formula \cite[Theorem 14.4]{Ful} and Bezout's Theorem the RR degree of $V$ is \eqref{eqn:GPT}.
\end{proof}

\begin{example}
  By Example \ref{ex:lagrange-twisted-cubic}, the Lagrangian and critical loci agree for the twisted cubic, and the RR degree is $10$. 
  If we use the formula in Proposition \ref{prop:degree-of-L(V,H)} as an upper bound (with $n=4$, $c=2$, and $d_1=d_2=2$), we obtain a bound of $16$. 
  The discrepancy comes from the fact that the twisted cubic is not a complete intersection.
\end{example}
\begin{corollary}[{\cite[Corollary 3.3]{SSW25}}]\label{cor:hypersurface}
Let $V \subset \mathbb{P}^{n-1}$ be a hypersurface of degree $d$ and codimension $c$. Then the RR degree of $V$ is at most
$$d \sum_{j=1}^{n-c} j (d-1)^{n-c-j}.$$
Moreover, equality holds for a general hypersurface $V$.
\end{corollary}
\begin{example}
$\mathbb{P}^1 \times \mathbb{P}^1$ is a hypersurface of degree two in its Segre embedding in $\mathbb{P}^3$. Therefore the Corollary \ref{cor:hypersurface} gives the upper bound of $12$ for the RR degree. However, this determinantal surface with RR degree $8$ is not general enough. 
The discrepancy comes from the four nontransverse intersection points in Example~\ref{ex:P1P1excess}.
\end{example}

\subsection{Parametric Critical Locus}\label{sec:param-crit-loc}
We now consider the case when $V \subseteq \mathbb{P}^{n-1}$ admits a birational parametrization $\psi \, : U \dashrightarrow V$ where $U$ is either an affine space or a product of projective spaces.  
Note that this includes tensor train varieties with the parametrization described in Section~\ref{sec:birational}.
We consider a parametric version of the Rayleigh-Ritz optimization problem \eqref{eqn:optproblem}, namely,
\begin{equation}\label{eqn:opt-param}
    \textrm{minimize  } R_H(x) = \frac{\psi(x)^TH\psi(x)}{\psi(x)^T\psi(x)}.
\end{equation}
This is an unconstrained optimization problem, and its critical points are precisely  the values of $x \in \psi^{-1}(V_{\rm reg} \backslash Q)$ for which $\nabla_x R_H(x) = 0$. 
If $x$ is critical for \eqref{eqn:opt-param}, then its image $\psi(x)$ is critical for \eqref{eqn:optproblem} if the Jacobian of the parametrization $\mathrm{Jac}(\psi)$ defines a surjective map onto the tangent space $T_{\psi(x)} V$ of $V$ at $\psi(x)$; see \cite[Lemma 3.1]{FH}.
We therefore fix the notation $I_{\psi, s}$ for the ideal generated by $s$-minors of the Jacobian $\mathrm{Jac}(\psi)$. 

\begin{proposition}
\label{prop:RR degree-rational}
Let $V\subseteq \mathbb{P}^{n-1}$ be a variety with a  birational parametrization $\psi \, : U \dashrightarrow V$ and base locus $B$.  
Then the set of critical points of \eqref{eqn:opt-param} is the zero set of the critical ideal
$$ J_{\mathrm{crit}}(V,H) = \bigg \langle (\psi(x)^T\psi(x))^2 \cdot \nabla_x R_H(x) \bigg \rangle : (\langle \psi(x)^T \psi(x) \rangle \cdot I_B \cdot I_{\psi, n - c})^\infty.$$
For a general $H \in \mathrm{Sym}^2(\mathbb{C}^n)$, the number of critical points is equal to the RR degree of $V$.
\end{proposition}
\begin{proof}
Critical points are exactly the points at which the gradient 
$\nabla_x R_H(x)$ vanishes. 
By the quotient rule, the partial derivatives of $R_H(x)$ have a common denominator of $(\psi(x)^T\psi(x))^2$.
We multiply by this factor to clear  denominators, so the ideal is generated by polynomials. 
We then remove points where $\psi(x)^T\psi(x) = 0$ by saturating. The last statement follows from Theorem \ref{thm:parametric-RR-corr} below. 
\end{proof}

\begin{example}
 The twisted cubic $C \subset \mathbb{P}^3$ is parametrized by $[a \, : \, b] \mapsto [a^3\, : \, a^2b \, : \, ab^2 \, : \, b^3]$.
 Using Proposition \ref{prop:RR degree-rational} and the matrix $H$ in Example \ref{ex:P1xP1},  we compute $J_{\mathrm{crit}}(C,H)$ to be 
 {\small 
 $$ \langle 78a^{10}+222a^9b+381a^8b^2+238a^7b^3+189a^6b^4-280a^5b^5-381a^4b^6-562a^3b^7-405a^2b^8-178ab^9-54b^{10}\rangle. $$} We conclude that the RR degree of the twisted cubic is $10$. In this example, there are two real critical points obtained from the parameter values 
 \[[0.52795 \, : \, -0.94040] \quad \text{and} \quad [0.83400 \, : \,  0.74797].\]
 The first one gives the global minimum with the corresponding value $14.9845$.
\end{example}

Similarly, the Lagrangian locus  has a parametric analog. 
As above, let $\psi: U   \dashrightarrow V$ be a birational parametrization of a variety $V$.  Suppose that the parametrization space $U$ has (projective or affine) coordinates $x_1, \ldots, x_m$. 
Write $f(x)$ for the numerator $\psi(x)^TH\psi(x)$ of $R_H(x)$ and $g(x) = \psi(x)^T\psi(x)$ for the denominator. 
By the quotient rule, the derivative with respect to $x_i$ vanishes precisely when $g(x)\frac{\partial f}{\partial x_i}(x) - f(x)\frac{\partial g}{\partial x_i}(x)$ equals zero. 
Therefore the critical points of \eqref{eqn:opt-param} are cut out by the $2$-minors of 
\begin{equation}\label{eqn:2-row-mat}
    \begin{pmatrix}
        f & \frac{\partial f}{\partial x_1} & \cdots & \frac{\partial f}{\partial x_m}\\
        g & \frac{\partial g}{\partial x_1} & \cdots & \frac{\partial g}{\partial x_m}
    \end{pmatrix} = 
    \begin{pmatrix}
        \psi(x)^T\\
      \psi(x)^TH
    \end{pmatrix}
    \cdot 
    \begin{pmatrix}
        \psi(x) & \mathrm{Jac}(\psi)(x)
    \end{pmatrix}.
\end{equation}
We remark that if $U$ is a projective space, then the first column of \eqref{eqn:2-row-mat} can be omitted using Euler's relation.

\begin{proposition}\label{prop:param-lagrange}
    Let $\psi: U \dashrightarrow V$ be a birational parametrization of a variety $V \subseteq \mathbb{P}^{n-1}$ of codimension $c$.
    Then the parametric Lagrangian locus is equal to 
    \begin{equation}\label{eqn:lagrange-param}
    \psi^{-1}(L(V, H)) = \overline{
        \{x \in \psi^{-1}(V_{\rm reg}) \backslash V(I_{\psi, n-c}):  
        {\rm rank} \,\,
        \textrm{\eqref{eqn:2-row-mat}} \leq 1.\}}
    \end{equation}
\end{proposition}
As in the implicit case, we obtain the critical locus after saturating by isotropic quadric.
\begin{corollary}\label{cor:param-crit=lagrange}
    The parametric critical locus equals the closure of the parametric Lagrangian locus \eqref{eqn:lagrange-param}
    after removing the isotropic quadric $Q$.
\end{corollary}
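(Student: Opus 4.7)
The plan is to deduce this corollary by combining three earlier results: Lemma~\ref{lem:crit-points-agree} on the correspondence of critical points under the parametrization, Theorem~\ref{thm:lagrange=crit} on the relation between critical and Lagrangian loci in the implicit setting, and the explicit description of the parametric Lagrangian locus from Proposition~\ref{prop:param-lagrange}.

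First, I would unravel the statement. The parametric Lagrangian locus is $\psi^{-1}(L(V,H))$ by Proposition~\ref{prop:param-lagrange}, understood as a Zariski closure in $U$ after removing the base locus $B$ of $\psi$. On the other hand, by Theorem~\ref{thm:lagrange=crit}, the implicit critical locus coincides with $\overline{L(V,H) \setminus Q}$. So the natural strategy is to pull back the equality of Theorem~\ref{thm:lagrange=crit} through $\psi$, and then invoke Lemma~\ref{lem:crit-points-agree} to identify the pullback with the parametric critical locus.

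More concretely, on the Zariski open set $\mathcal{O} := \psi^{-1}(V_{\rm reg} \setminus Q) \setminus B$, Lemma~\ref{lem:crit-points-agree} gives a bijection between parametric critical points $\hat{x} \in \mathcal{O}$ and implicit critical points $\psi(\hat{x})$, while Theorem~\ref{thm:lagrange=crit} identifies the implicit critical points on $V_{\rm reg} \setminus Q$ with $L(V,H) \setminus Q$. Composing, the parametric critical points lying in $\mathcal{O}$ are exactly $\psi^{-1}(L(V,H) \setminus Q) \cap \mathcal{O} = \psi^{-1}(L(V,H)) \setminus (\psi^{-1}(Q) \cup B)$. Taking the Zariski closure of both sides in $U$ yields the desired equality, since the parametric critical locus is by definition the closure of its intersection with $\mathcal{O}$ (as it is cut out by $\nabla_x R_H(x) = 0$ after saturating by $\psi^T \psi$), and the closure of the right-hand side is the parametric Lagrangian locus with $Q$ removed.

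The main subtleties to handle carefully are the closure operations and the base locus. Since $\psi$ is birational, it restricts to an isomorphism between nonempty open subsets of $U$ and $V$, and both $\psi^{-1}(Q)$ and $B$ are proper closed subsets of $U$ (using that $V \not\subseteq Q$ and that $\psi$ is dominant), so $\mathcal{O}$ is Zariski dense in $U$. This density makes the closure manipulations on both the parametric and implicit sides compatible, and ensures that no genuine component of either locus is lost when passing through the open set $\mathcal{O}$. That closure bookkeeping, rather than any deep geometric content, is the only real obstacle.
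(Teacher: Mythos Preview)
Your approach is correct but differs from the paper's. The paper gives a direct, hands-on argument: for one containment it applies the chain rule to $\nabla_x R_H(x)$ to see that the matrix ${\rm Jac}(\psi)^T\cdot(\psi(x)\;\;H\psi(x))$ has rank at most one at a critical point; for the converse, it takes a point of the parametric Lagrangian locus off $Q$, writes $\mu_1\psi(\hat x)-\mu_2 H\psi(\hat x)\in\ker{\rm Jac}(\psi)^T$, and then reruns the Euler-identity computation from the proof of Theorem~\ref{thm:lagrange=crit} to identify $\mu_1/\mu_2$ with the Rayleigh quotient. In other words, the paper essentially reproves Theorem~\ref{thm:lagrange=crit} in parametric coordinates.

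Your route is more structural: you pull back the implicit equality $\overline{L(V,H)\setminus Q}=\{\text{critical locus}\}$ through the birational map $\psi$, using Lemma~\ref{lem:crit-points-agree} and Proposition~\ref{prop:param-lagrange} as translation devices. This avoids repeating the Euler-identity step and makes the logical dependence on Theorem~\ref{thm:lagrange=crit} explicit. The price you pay is exactly the closure bookkeeping you flag: you must argue that passing through the dense open $\mathcal{O}$ and then closing does not create or lose components, and that the base locus and $\psi^{-1}(V_{\rm sing})$ are harmless. The paper's direct computation sidesteps this bookkeeping entirely, since the rank-one condition and the chain rule are verified pointwise with no appeal to closures until the very end. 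Both arguments are sound; yours is tidier conceptually, the paper's is tidier technically.
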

\begin{proof}
    By the quotient rule, a point $\hat{x} \in U$ is a critical point of \eqref{eqn:opt-param} if and only if $g(x) =\psi(\hat x)^T\psi(\hat x)$ is nonzero and \eqref{eqn:2-row-mat} has rank $1$.
\end{proof}

\begin{example}
    We parametrize $\mathbb P^1 \times \mathbb P^1 \subseteq \mathbb P^3$ by $([a:b], [c:d]) \mapsto [ac: ad: bc: bd]$.
    For a given $H$, the ideal of the parametric Lagrangian locus is generated by $2$-minors of the matrix
    \begin{align*}
    \begin{footnotesize}
       \begin{pmatrix}
           c & d & &\\
            & & c & d \\
           a&  & b & \\
           & a &  & b
       \end{pmatrix} \cdot 
       \begin{pmatrix}
           ac & h_{11}ac + h_{12}ad + h_{13}bc + h_{14}bd\\
           ad & h_{12}ac + h_{22}ad + h_{23}bc + h_{24}bd\\
           bc & h_{13}ac + h_{23}ad + h_{33}bc + h_{34}bd\\
           bd & h_{14}ac + h_{24}ad + h_{34}bc + h_{44}bd
       \end{pmatrix} \end{footnotesize}.
    \end{align*}
    This ideal decomposes as $J_{\rm crit}(V, H)  \cap \langle a^2 + b^2, c^2 + d^2 \rangle$. 
    The excess component consists of the four points $([1:i], [1:i]),\ ([1:i], [1:-i]),\ ([1:-i], [1:i]), $ and $([1:-i], [1:-i])$. 
    These are precisely the four points in Example~\ref{ex:P1P1excess}.
    
For the twisted cubic, the parametric Lagrangian locus agrees with the parametric critical locus as in Example~\ref{ex:P1P1excess}.
\end{example}

\section{Rayleigh-Ritz Correspondence and Discriminant}\label{sec:cor-dis}
In this section, we allow the symmetric data matrix $H$ to vary.
Because the Rayleigh quotient $\frac{\psi^TH\psi}{\psi^T\psi}$ is linear in the entries of $H$, the problem \eqref{eqn:optproblem} has the same critical points if $H$ is replaced with any nonzero scalar multiple $\lambda H$.
We therefore only consider the data matrices up to scaling, and accordingly work in $\mathbb P({\rm Sym}^2(\mathbb C^n))$.
We first define the Rayleigh-Ritz correspondence as the incidence variety of symmetric matrices $H$ and the corresponding critical points $\psi$.
We define this correspondence for general projective varieties. 
We illustrate this construction with small examples, namely projective spaces, rational normal curves and the smallest tensor train variety, $\mathbb P^1 \times \mathbb P^1$. 
Similar correspondences have been introduced for maximum likelihood estimation \cite{HS14, KSSW} and for Euclidean distance optimization \cite{DHGSR}.

We then define the Rayleigh-Ritz discriminant, which describes the matrices for which the critical locus of \eqref{eqn:optproblem} exhibits nongeneral behavior. 
Analogous discriminants were defined in \cite{KKT25} for the maximum likelihood estimation problem and in \cite{DHGSR} for the Euclidean distance optimization problem.

\subsection{Rayleigh-Ritz Correspondence}
Throughout this section, $V \subseteq \mathbb P^{n-1}$ represents an irreducible projective variety not contained in the isotropic quadric $Q = V(\psi^T\psi)$.
We define the {\em Rayleigh-Ritz correspondence} (RR correspondence) as the pairs of smooth points $\psi \in V_{\rm reg}$ and symmetric matrices $H$ such that $\psi$ is a critical point of \eqref{eqn:optproblem} for that matrix $H$: 
\begin{equation*}\label{eqn:RR-cor}
    \mathcal R_V = \overline{\{(\psi, H) \in V_{\rm reg} \times \mathbb P({\rm Sym}^2(\mathbb C^n)): 
    \psi \textrm{ is a critical point of \eqref{eqn:optproblem}}
    \}}.
\end{equation*}
Here both the vector $\psi$ and the data matrix $H$ are indeterminates.
If $V$ has codimension $c$, then by Proposition~\ref{prop:Lagrangian-locus} and Theorem~\ref{thm:lagrange=crit} the correspondence is the zero locus of the ideal
\begin{equation}\label{eqn:IRV}
    \big \langle
    (c + 2)\textrm{-minors of \eqref{eqn:lagrange-mat}} 
    \big \rangle :
    (I_{V_{\rm sing}} \cdot \langle \psi^T\psi \rangle)^\infty.
\end{equation}
If $V$ is smooth and intersects the quadric $Q$ transversally, then this ideal can be computed without saturation by Theorem~\ref{thm:lagrange=crit}. 

\begin{theorem}\label{thm:RR-cor}
    The RR correspondence $\mathcal R_V$ of an irreducible variety $V \subseteq \mathbb P^{n-1}$ of codimension $c$ is an irreducible variety in $\mathbb P^{n-1} \times \mathbb P({\rm Sym}^2(\mathbb C^n))$ of dimension $\binom{n+1}{2}-1$. 
    The first projection $\pi_1: \mathcal R_V \to V \subseteq \mathbb P^{n-1}$ is a vector bundle of rank $\binom{n+1}{2} - n + c$ over $V_{\rm reg} \backslash Q$. 
    Over general data matrices $H \in \mathbb P({\rm Sym}^2(\mathbb C^n))$, the second projection $\pi_2: \mathcal R_V \to \mathbb P({\rm Sym}^2(\mathbb C^n))$ has  finite fibers $\pi_2^{-1}(H)$ of cardinality equal to the RR degree of $V$.
\end{theorem}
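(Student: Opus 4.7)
The plan mirrors the proof of Theorem~\ref{thm:lagrange-cor} but restricted to the locus where $\psi\notin Q$, which is precisely what separates the critical locus from the Lagrangian locus (by Theorem~\ref{thm:lagrange=crit}). First I would fix $\hat\psi\in V_{\rm reg}\setminus Q$ and describe the fiber $\pi_1^{-1}(\hat\psi)$. By Proposition~\ref{prop:Icrit-RR} together with Theorem~\ref{thm:lagrange=crit}, the condition $(\hat\psi,H)\in\mathcal R_V$ is equivalent to $\overline{\mathrm{Jac}}(\hat\psi)$ in \eqref{eqn:lagrange-mat} having rank at most $c+1$, i.e., $\hat\psi^T H$ lies in the subspace $W_{\hat\psi}:=\langle \hat\psi^T\rangle+\mathrm{rowspan}\,\mathrm{Jac}(I_V)(\hat\psi)\subseteq\mathbb C^n$. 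A short Euler identity argument shows that if $\hat\psi^T=\sum_i\lambda_i(\nabla f_i)(\hat\psi)$ for homogeneous generators $f_i$ of $I_V$ of degrees $d_i$, then $\hat\psi^T\hat\psi=\sum_i\lambda_i d_i f_i(\hat\psi)=0$; hence $\hat\psi\notin Q$ forces $\hat\psi^T\notin\mathrm{rowspan}\,\mathrm{Jac}(I_V)(\hat\psi)$, so $\dim W_{\hat\psi}=c+1$ exactly.

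Consequently, the condition $\hat\psi^T H\in W_{\hat\psi}$ imposes $n-c-1$ independent linear conditions on $H\in\mathrm{Sym}^2(\mathbb C^n)$, and these conditions vary algebraically with $\hat\psi$. Projectivizing, each fiber is a linear subspace of $\mathbb P(\mathrm{Sym}^2(\mathbb C^n))$ of dimension $\binom{n+1}{2}-n+c$, and the assembled family is a projective subbundle of $(V_{\rm reg}\setminus Q)\times\mathbb P(\mathrm{Sym}^2(\mathbb C^n))$ over the irreducible base $V_{\rm reg}\setminus Q$. Its total space is therefore irreducible of dimension $(n-1-c)+(\binom{n+1}{2}-n+c)=\binom{n+1}{2}-1$. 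Taking Zariski closure in $\mathbb P^{n-1}\times\mathbb P(\mathrm{Sym}^2(\mathbb C^n))$ yields $\mathcal R_V$, which remains irreducible of the same dimension, and the vector bundle description survives literally over $V_{\rm reg}\setminus Q$.

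For the second projection $\pi_2:\mathcal R_V\to\mathbb P(\mathrm{Sym}^2(\mathbb C^n))$, the source and target now have equal dimensions, so generic finiteness follows once dominance is established. I would prove dominance by a real compactness argument: for any real positive definite $H$, the Rayleigh quotient $R_H$ attains a minimum on the compact real projective variety $V_{\mathbb R}$, and for generic such $H$ this minimizer lies in $V_{\rm reg}\setminus Q$ and is a genuine critical point, hence a complex critical point. Thus $\pi_2(\mathcal R_V)$ contains a Euclidean-open subset of symmetric matrices, forcing $\pi_2$ to be dominant. Generic finiteness follows, and the cardinality of a general complex fiber is the RR degree of $V$ by definition.

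The main obstacle is the fiber computation in the first step: verifying that the rank of the linear system defining the fibers is exactly $n-c-1$ uniformly over $V_{\rm reg}\setminus Q$, so that the fibers genuinely assemble into a projective subbundle rather than merely an equidimensional family. The Euler identity calculation is the key tool here, since it pinpoints the drop-of-rank locus as exactly $V\cap Q$, which is excised in the statement of the theorem. A secondary care point is the dominance of $\pi_2$; the real compactness argument sidesteps the potential worry that $\pi_2(\mathcal R_V)$ might lie in some algebraic proper subvariety of $\mathbb P(\mathrm{Sym}^2(\mathbb C^n))$.
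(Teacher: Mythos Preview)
Your proof is correct and follows essentially the same route as the paper: compute the $\pi_1$-fibers as linear subspaces over $V_{\rm reg}\setminus Q$, deduce irreducibility and dimension from the bundle structure, then conclude generic finiteness of $\pi_2$ from the equality of dimensions. You supply two details the paper leaves implicit---the Euler identity argument confirming $\dim W_{\hat\psi}=c+1$ exactly (the paper simply asserts the span of $\hat\psi^T$ and the rows of $\mathrm{Jac}(I_V)$ is $c$-dimensional projectively), and an explicit dominance argument for $\pi_2$ via real compactness (the paper writes only ``Since $\pi_2$ is finite'' without further justification)---though your compactness step tacitly assumes $V_{\mathbb R}$ contains smooth points outside $Q$, an assumption implicit in the paper's optimization framing but not stated as a hypothesis of the theorem.
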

\begin{proof}
    To prove the vector bundle property, consider a point 
    $\hat \psi \in V_{\rm reg} \backslash Q$.  
    By \eqref{eqn:IRV}, $(\psi,H) \in \pi_1^{-1}(\psi)$ precisely when  $\hat \psi^T H$ is in the $c$-dimensional subspace of $\mathbb P^{n-1}$ spanned by $\hat \psi^T$ and the rows of ${\rm Jac}(I_V)$.
    The linear map $H \mapsto \psi^TH$ is surjective and hence its fibers have dimension $\binom{n + 1}{2} -n$. 
    Therefore the projective linear space $\pi_1^{-1}(\psi)$ has dimension $\binom{n+1}{2} - n + c$.
    Since $V$ is irreducible, the correspondence $\mathcal R_V$ is irreducible as well. 
    The projective dimension of $\mathcal R_V$ is 
    $\binom{n+1}{2}-1$.
    Since $\pi_2$ is finite, the general fibers $\pi_2^{-1}(H)$ are finite and have the same size. 
    This cardinality is the RR degree of $V$.
\end{proof}
\begin{example}
    The RR correspondence of the rational quadric $V(\psi_1\psi_3 - \psi_2^2) \subseteq \mathbb P^2$ is a 5-dimensional subvariety of $\mathbb P^2 \times \mathbb P^5$ and is minimally generated by $\psi_1\psi_3 - \psi_2^2$ and 
    \begin{align*}
        \psi_1^3h_{12} + \psi_1^2\psi_2(-h_{11} + 2h_{13} + h_{22}) + \psi_1^2\psi_3(-h_{12} + 3h_{23} - 2h_{11}) + 2\psi_1\psi_2\psi_3h_{33}\\
        + \psi_1\psi_3^2(-3h_{12} + h_{23} - 2h_{13})
        + \psi_2\psi_3^2(-2h_{13} - h_{22} + h_{33}) 
        - \psi_3^3h_{23}.
    \end{align*}
\end{example}
We turn now to the parametric situation introduced in Section~\ref{sec:RRdeg}. 
Suppose $V$ admits a birational parametrization $\psi: U \dashrightarrow V$ from an affine space or product of projective spaces $U$.
We define the parametric RR correspondence as 
\begin{equation*}
    \mathcal{P}_V = \overline{\{(x, H) \in \psi^{-1}(V_{\rm reg} \backslash Q)
\times \mathbb P({\rm Sym}^2(\mathbb C^n)) : 
     x \textrm{ is a critical point of \eqref{eqn:opt-param}}
    \}}.
    \end{equation*}
Since the parametric and implicit critical points agree, 
$$\psi \times {\rm Id}:  U \times \mathbb P( {\rm Sym}^2(\mathbb C^n) )\dashrightarrow \mathbb P^{n-1} \times \mathbb P( {\rm Sym}^2(\mathbb C^n) )$$
is a birational map from the parametric correspondence $\mathcal P_V$ to the usual correspondence $\mathcal R_V$.
By Corollary~\ref{cor:param-crit=lagrange}, the parametric correspondence is the zero set of the ideal
\begin{equation*}
   \big \langle 2\textrm{-minors of }
   {\rm Jac}(\psi)^T \cdot 
   \begin{pmatrix} 
   \psi(x) & H\cdot \psi(x)
   \end{pmatrix}
   \big \rangle : 
   (\langle (n-c)\textrm{-minors of } {\rm Jac}(\psi) \rangle \cdot 
   \langle \psi(x)^T\psi(x)\rangle \cdot I_B)^\infty
\end{equation*}
where $I_B$ is the ideal of the base locus $B$ of $\psi$.

\begin{theorem}\label{thm:parametric-RR-corr}
    The parametric RR correspondence $\mathcal P_V$  of $V = \overline{{\rm im}(\psi)} \subseteq \mathbb P^{n-1}$ is an irreducible variety of dimension $\binom{n+1}{2} - 1$. 
    The first projection $\gamma_1: \mathcal P_V \to U$ is a vector bundle of rank $\binom{n+1}{2} -n + c$ over $\psi^{-1}(V_{\rm reg}\backslash Q)$. 
    Over general data matrices $H \in {\rm Sym}^2(\mathbb C^n)$, the second projection $\gamma_2: \mathcal P_V \to \mathbb{P}({\rm Sym}^2(\mathbb C^n))$ has  finite fibers $\gamma_2^{-1}(H)$ of cardinality equal to the RR degree of $V$.
\end{theorem}
\begin{proof}
    Since the parametric and implicit critical points agree, by the definitions of $\mathcal P_V$ and $\mathcal R_V$, 
    we have the commutative diagram 
\[
        \begin{tikzcd}
            \mathcal P_V \arrow[r, dashed, "\psi \times {\rm Id}"] \arrow[d, "\gamma_1"] & \mathcal R_V \arrow[d, "\pi_1"]\\
            U \arrow[r, dashed, "\psi"] & \mathbb P^{n-1}
        \end{tikzcd}
\]
    Therefore $\gamma_1 = \psi^{-1} \circ \pi_1 \circ (\psi \times {\rm Id})$ on pairs $(\hat x, \hat H) \in \mathcal P_V$.
    Such an $\hat x$ has fiber $\gamma_1^{-1}(\hat x) = (\psi^{-1} \times {\rm Id})(\pi_1^{-1}(\psi(\hat x))) = \{\hat x\} \times \pi_2(\pi_1^{-1}(\psi(\hat x)))$.
    Hence the fibers of $\gamma_1$ are isomorphic to the fibers of $\pi_1$ and 
    the vector bundle property, irreducibility, and dimension all follow from Theorem~\ref{thm:RR-cor}.
    For the second projection, we have the commutative diagram 
    \[
        \begin{tikzcd}
            \mathcal P_V \arrow[r, dashed, "\psi \times {\rm Id}"] \arrow[dr, "\gamma_2"] & \mathcal R_V \arrow[d, "\pi_2"]\\
             & \mathbb P({\rm Sym}^2(\mathbb C^n))
        \end{tikzcd}
\]
    Thus $\gamma_2^{-1}(\hat H) = (\psi \times {\rm Id})^{-1}(\pi_2^{-1}(\hat H))$. 
    Since $\pi_2^{-1}(\hat H)$ is generically a finite set of cardinality ${\rm RR degree}(V)$ by Theorem~\ref{thm:RR-cor} and $\psi \times {\rm Id}$ is generically injective, the last statement holds. 
\end{proof}

\begin{example}
    The parametric RR correspondence of the rational quadric curve parametrized as $[a:b] \mapsto [a^2:ab:b^2]$
    is a hypersurface in $\mathbb P^1 \times \mathbb P^5$ cut out by 
    \begin{align*}
        a^6 h_{12} + 
        a^5b (h_{11} + 2h_{13} + h_{22}) +
        a^4b^2(-h_{12} + 3h_{23}) &+ 
        a^3b^3(2h_{11} + 2h_{33}) \\+ 
        a^2b^4(-3h_{12} + h_{23}) &+ 
        ab^5(-2h_{13} - h_{22} + h_{33}) - 
        b^6h_{23}.
    \end{align*}
\end{example}
\begin{example}
    The parametric RR correspondence of $\mathbb P^1 \times \mathbb P^1$ is a 9-dimensional subvariety of $(\mathbb P^1 \times \mathbb P^1) \times \mathbb P^9$ of degree 17. 
    Its prime ideal is minimally generated by eight polynomials: two of degree $(2,2,1)$, two of degree $(1,4,2)$, two of degree $(4,1,2)$, one of degree $(0,8,4)$ and one of degree $(8,0,4)$; see \cite{zenodo_our} for these polynomials. 
\end{example}

\subsection{Rayleigh-Ritz Discriminant}\label{sec:RR-dis}
In Theorem~\ref{thm:RR-cor}, we proved that for a general choice of data matrix  $H$, the fiber $\pi_2^{-1}(H)$ has RR degree many points. 
By ``general,'' we mean that this statement holds outside of a subvariety of the data space $\mathbb P({\rm Sym}^2(\mathbb C^n))$.
We call this subvariety, which is typically a hypersurface, the {\em Rayleigh-Ritz discriminant} (RR discriminant):
\begin{equation*}
    \Sigma_V = \overline{\{
    H \in \mathbb P({\rm Sym}^2(\mathbb C^n)) : 
    \textrm{ the critical locus of \eqref{eqn:optproblem} is infinite or nonreduced}
    \}}.
\end{equation*}

We use the Jacobian criterion to determine when the critical locus of a matrix $H$ is singular by treating the variables $h_{11}, h_{12}, \ldots, h_{nn}$ as parameters. 
Given the ideal $I_{\mathcal R_V} = \langle g_1, \ldots, g_\ell\rangle  \subseteq \mathbb C[\psi_1, \ldots,\psi_n, h_{11}, h_{12}, \ldots, h_{nn}]$ of the correspondence, a point in the critical locus is nonreduced or lies on a positive dimensional component precisely when the following $\ell \times n$ matrix drops rank: 
\begin{equation}\label{eqn:JpsiRV}
    {\rm Jac}_\psi(\mathcal R_V) = 
    \begin{pmatrix}
        \frac{\partial g_1}{\partial \psi_1} & \frac{\partial g_1}{\partial \psi_2} & \cdots & \frac{\partial g_1}{\partial \psi_n}\\
        \frac{\partial g_2}{\partial \psi_1} & \frac{\partial g_2}{\partial \psi_2} & \cdots & \frac{\partial g_2}{\partial \psi_n}\\
        \vdots  & \vdots & \ddots & \vdots \\
        \frac{\partial g_\ell}{\partial \psi_1} & \frac{\partial g_\ell}{\partial \psi_2} & \cdots & \frac{\partial g_\ell}{\partial \psi_n}\\
    \end{pmatrix}.
\end{equation}
The RR discriminant can therefore be written as
   \begin{equation*}
        \Sigma_V = 
        \pi_2(\{
        (\psi, H) \in \mathcal R_V : 
        {\rm rank}\,\, {\rm Jac}_\psi(\mathcal R_V)(\psi, H) \leq n-2
    \}).
    \end{equation*}

The RR discriminant can also be computed from the parametric RR correspondence.
Suppose that the parametrization space $U$ has (projective or affine) coordinates $x_1, \ldots, x_m$.
If $I_{\mathcal P_V} = \langle f_1, \ldots, f_p \rangle \subseteq \mathbb C[x_1, \ldots, x_m, h_{11}, h_{12}, \ldots, h_{nn}]$, we define the $p \times m$ Jacobian 
\begin{equation*}\label{eqn:JxPV}
    {\rm Jac}_x(\mathcal P_V) = 
    \begin{pmatrix}
        \frac{\partial f_1}{\partial x_1} & \frac{\partial f_1}{\partial x_2} & \cdots & \frac{\partial f_1}{\partial x_m}\\
        \frac{\partial f_2}{\partial x_1} & \frac{\partial f_2}{\partial x_2} & \cdots & \frac{\partial f_2}{\partial x_m}\\
        \vdots  & \vdots & \ddots & \vdots \\
        \frac{\partial f_p}{\partial x_1} & \frac{\partial f_p}{\partial x_2} & \cdots & \frac{\partial f_p}{\partial x_m}\\
    \end{pmatrix}.
\end{equation*}
\begin{proposition}\label{prop:param-dis}
    Suppose $V$ admits a birational parametrization $\psi: U \dashrightarrow V$. 
    Then the RR discriminant is
    \begin{equation*}
        \Sigma_V = 
        \pi_2(\{
        (x, H) \in \mathcal P_V : 
        {\rm rank}\,\, {\rm Jac}_x(\mathcal P_V)(x, H) \leq \dim U -1
    \}).
    \end{equation*}
\end{proposition}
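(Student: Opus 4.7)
The plan is to mirror the argument used to describe $\Sigma_V$ via the implicit correspondence $\mathcal{R}_V$ and the Jacobian~\eqref{eqn:JpsiRV}, but now on the parametric side, using the birational equivalence between $\mathcal{P}_V$ and $\mathcal{R}_V$ provided by $\psi \times \mathrm{Id}$. By definition, $H \in \Sigma_V$ iff the critical locus of \eqref{eqn:optproblem} is either positive-dimensional or carries a nonreduced point. By Lemma~\ref{lem:crit-points-agree} and the birational equivalence in the proof of Theorem~\ref{thm:parametric-RR-corr}, this happens iff the fiber $\gamma_2^{-1}(H) \subseteq U$ — the parametric critical locus — has the same defect.

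Next, I would apply the Jacobian criterion to the specialized ideal $I_{\mathcal{P}_V}\big|_H = \langle f_1(x,H), \dots, f_p(x,H)\rangle \subseteq \mathbb{C}[x_1,\dots,x_m]$. Since partial differentiation with respect to $x$ commutes with specialization of the parameters $h_{ij}$, the Jacobian of the specialized ideal at a point $\hat{x}$ equals ${\rm Jac}_x(\mathcal{P}_V)(\hat{x}, H)$. For $H$ lying in an open set of the data space, the fiber $\gamma_2^{-1}(H)$ is zero-dimensional of size equal to the RR degree (Theorem~\ref{thm:parametric-RR-corr}), so at such a point the Jacobian attains its maximum possible rank $\dim U$. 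Conversely, either a positive-dimensional component of $\gamma_2^{-1}(H)$ through $\hat{x}$ or nonreducedness at $\hat{x}$ forces ${\rm rank}\, {\rm Jac}_x(\mathcal{P}_V)(\hat{x}, H) \leq \dim U - 1$. Projecting the rank-deficient locus in $\mathcal{P}_V$ onto the second factor then yields exactly $\Sigma_V$.

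The main subtlety lies in justifying the Jacobian criterion for a defining ideal obtained after saturation. I would address this by noting that $\mathcal{P}_V$ is irreducible and reduced (Theorem~\ref{thm:parametric-RR-corr}), so its prime ideal is cut out by the $f_i$'s on the complement of the saturating locus, and the saturation only removes spurious components supported on $\psi(x)^T\psi(x) = 0$ or in the base locus $B$ of $\psi$; by Corollary~\ref{cor:param-crit=lagrange} these are exactly the loci we have already removed from the critical locus. Consequently, the rank-deficiency condition on ${\rm Jac}_x(\mathcal{P}_V)$ faithfully detects singularities of the fibers of $\gamma_2$ over the complement of $\Sigma_V$, and by taking closures we recover the full discriminant. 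This confirms the claimed description.
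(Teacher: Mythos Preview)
Your overall plan --- transfer the defective-fiber condition from $\pi_2$ to $\gamma_2$ via the birational equivalence $\psi\times\mathrm{Id}$, then invoke the Jacobian criterion on the parametric side --- is sensible, but the central step is not justified. You claim that $\pi_2^{-1}(H)$ is infinite or nonreduced iff $\gamma_2^{-1}(H)$ is, citing Lemma~\ref{lem:crit-points-agree} and Theorem~\ref{thm:parametric-RR-corr}. Those results give only a \emph{set-theoretic} bijection between critical points on an open locus; they do not transfer scheme structure. A nonreduced point of $\pi_2^{-1}(H)$ need not correspond to a nonreduced point of $\gamma_2^{-1}(H)$ under a merely birational map, and you do not argue that you are on the locus where $\psi$ is a local isomorphism. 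The saturation discussion in your last paragraph addresses a different issue (removing base-locus and isotropic components from the generators) and does not close this gap.

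The paper's proof supplies precisely the missing ingredient, and in doing so organizes the argument differently: rather than comparing defective fibers, it compares the two ramification loci $\mathcal{P}_V^{\rm ram}$ and $\mathcal{R}_V^{\rm ram}$ directly. At $\hat{x}$ with $\psi(\hat{x})\in V_{\rm reg}$, the differential ${\rm Jac}_x(\psi)(\hat{x})$ has full rank $\dim U$, so it induces an \emph{injection} $T_{\hat{x}}\gamma_2^{-1}(H)\hookrightarrow T_{\psi(\hat{x})}\pi_2^{-1}(H)$ of Zariski tangent spaces of the fibers. Hence a rank drop of ${\rm Jac}_x(\mathcal{P}_V)$ at $(\hat{x},H)$ forces a rank drop of ${\rm Jac}_\psi(\mathcal{R}_V)$ at $(\psi(\hat{x}),H)$, so $\psi\times\mathrm{Id}$ carries $\mathcal{P}_V^{\rm ram}$ into $\mathcal{R}_V^{\rm ram}$ with dense image; the $H$-projections then agree. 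If you replace your unjustified fiber-by-fiber equivalence with this tangent-space comparison, your argument goes through.
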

\begin{proof}
    Let $\mathcal R_V^{\rm ram} = \{
        (\psi, H) \in \mathcal R_V : 
        {\rm rank}\,\, {\rm Jac}_\psi(\mathcal R_V) \leq n-2
    \}$  
    and 
    $\mathcal P_V^{\rm ram} = \{(x, H) \in \mathcal P_V :  {\rm rank}\,\, {\rm Jac}_x(\mathcal P_V) \leq \dim U - 1 \}$.
    It suffices to prove that the image of $\psi \times {\rm Id}: \mathcal P^{\rm ram}_V \dashrightarrow \mathcal R_V$ is 
    dense in 
    $\mathcal R^{\rm ram}_V$.
    In this case, $\psi \times {\rm Id}$ gives a birational map $\mathcal P_V^{\rm ram}  \dashrightarrow \mathcal R_V^{\rm ram}$ sending $$(x, H) \mapsto (\psi(x), H)$$ and so the projections onto the second factor of these sets will be equal.
    
    Suppose $(\hat x,\hat H)$ lies in the dense open set
$
\bigl(\psi^{-1}(V_{\mathrm{reg}})\times
\mathbb P(\operatorname{Sym}^2(\mathbb C^n))\bigr)\cap {\cal P}^{\mathrm{ram}}_V
$.
    Since $\psi(\hat x)$ is a regular point, the Jacobian ${\rm Jac}_x(\psi)(\hat x)$ has full rank equal to $\dim U$, so the map $${\rm Jac}_x(\psi)(\hat x): T_{\hat x} \gamma_2^{-1}(\hat H) \to T_{\psi(\hat x)} \pi_2^{-1}(\hat H) $$ between tangent spaces of the parametric and implicit critical loci of $\hat H$ is injective.
    Because ${\rm rank}\,\, {\rm Jac}_x(\mathcal P_V)(\hat x, \hat H) \leq \dim U - 1$, the tangent space $T_{\hat x} \gamma_2^{-1}(\hat H)$ has dimension at least $1$. 
    Therefore the image of $T_{\hat x} \gamma_2^{-1}(\hat H) $ under the injective map ${\rm Jac}_x(\psi)(\hat x)$ has dimension at least one. 
    Hence, the tangent space $T_{\psi(\hat x)} \pi_2^{-1}(\hat H) $ has dimension at least $1$, and therefore 
    ${\rm Jac}_\psi(\mathcal P_V)(\psi(\hat x), \hat H)$ has rank at most $n - 2$.
    This proves that $\psi \times {\rm Id}$ maps $\mathcal P^{\rm ram}_V$ into $\mathcal R^{\rm ram}_V$.
    By construction, the image $(V_{\rm reg} \times \mathbb P({\rm Sym}^2(\mathbb C^n)) \cap  \mathcal R^{\rm ram}_V$ of the dense open set under $\psi \times {\rm Id}$ is dense in $\mathcal R^{\rm ram}_V$.     
\end{proof}
We introduced $\Sigma_V$ as the set of matrices $H$ whose critical locus is infinite or singular.
This is the perspective taken in \cite{KKT25} when defining the discriminant of maximum likelihood estimation. 
The authors of \cite{DHGSR} take a different perspective, defining the discriminant of the Euclidean distance optimization problem as the branch locus of the projection $\pi_2$ onto the second factor. 
We now argue that these two definitions agree for RR discriminants. 
\begin{proposition}
    The RR discriminant of $V$ is equal to the branch locus of $$\pi_2: \mathcal R_V \to \mathbb P({\rm Sym}^2(\mathbb C^n)).$$    
\end{proposition}
\begin{proof}
    The branch locus consists of the matrices $H$ for which the differential ${\rm Jac}(\pi_2)$ of the projection $\pi_2$ is not surjective.
    We work in the affine chart where $\psi_1 = h_{11} = 1$.
    The Jacobian of $\mathcal R_V$ at a point $(\hat \psi, \hat H)$ with $\hat \psi \in V_{\rm reg}$ may be written as 
    $${\rm Jac}(\mathcal R_V)(\hat \psi, \hat H) = \begin{pmatrix} {\rm aJac}_\psi(\mathcal R_V)(\hat \psi, \hat H) &  {\rm aJac}_H(\mathcal R_V)(\hat \psi, \hat H)\end{pmatrix}$$
    where ${\rm aJac}_\psi(\mathcal R_V)$ consists of the last $n-1$ columns of \eqref{eqn:JpsiRV} and ${\rm aJac}_H(\mathcal R_V)$ is the analogous $\ell \times (\binom{n + 1}{2}-1)$ matrix where derivatives are taken with respect to $h_{12}, h_{13}, \ldots, h_{nn}$.
    There exists a $(n + \binom{n+1}{2}-2) \times (\binom{n+1}{2}-1)$ matrix $A = \begin{pmatrix} A_{\hat \psi}\\A_{\hat H} \end{pmatrix}$ whose columns form a basis for the kernel of ${\rm Jac}(\mathcal R_V)$, i.e., they form a basis for the tangent space of $\mathcal R_V$ at $(\hat \psi, \hat H)$. 
    The column span of $A_{\hat H}$ is the image of the tangent space $T_{(\hat \psi, \hat H)}\mathcal R_V$ under the differential map ${\rm Jac}(\pi_2)$.
    For general $\hat H$, this map is surjective and $A_{\hat H}$ has full rank.
    
    If $\hat H$ is in the branch locus, then $A_{\hat H}$ is not full rank, and
    we may choose $A_{\hat H}$ so that its last column is zero. 
    This choice forces the last column of $A_{\hat \psi}$ to be in the kernel of ${\rm aJac}_\psi(\mathcal R_V)(\hat \psi, \hat H)$. 
    Hence ${\rm aJac}_\psi(\mathcal R_V)(\hat \psi, \hat H)$ has rank less than $n - 1$ and so $\hat H$ is contained in $\Sigma_V$.
    Conversely, if $\hat H \in \Sigma_V$, then the kernel of ${\rm aJac}_\psi(\mathcal R_V)(\hat \psi, \hat H)$ contains a nonzero vector~$v$.
    Then the vector $\begin{pmatrix} v & 0 \end{pmatrix}^{\top}$ is in the column span of $A$. 
    Hence $A_{\hat H}$ drops rank and so $\hat H$ is in the branch locus of $\pi_2$.
    Thus $A_{\hat H}$ drops rank precisely when $\operatorname{Jac}_\psi(\mathcal R_V)(\hat \psi, \hat H)$ drops rank, and the two discriminants match. 
\end{proof}

We now examine the RR discriminants of projective spaces and  rational normal curves, before turning to the components of RR discriminants. 

\begin{proposition}
    The RR discriminant of $\mathbb P^{n-1}$ is the discriminant of the characteristic polynomial of a symmetric matrix. The defining polynomial has degree $n(n-1)$.
\end{proposition}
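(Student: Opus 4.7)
The plan is to reduce the problem to an explicit analysis of the critical locus of the Rayleigh quotient on $V = \mathbb{P}^{n-1}$, identify $\Sigma_V$ with the vanishing locus of the classical discriminant of the characteristic polynomial of $H$, and compute the degree via a scaling argument.

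First I would use Proposition~\ref{prop:Icrit-RR}: since $\mathbb{P}^{n-1}$ is smooth of codimension $c = 0$, its defining ideal $I_V$ is trivial and the critical ideal collapses to the saturation $\langle (\psi^T\psi)H\psi - (\psi^T H\psi)\psi\rangle : (\psi^T\psi)^\infty$. Thus a critical point $\hat\psi$ must satisfy $H\hat\psi = \hat\lambda\hat\psi$ with $\hat\lambda = \hat\psi^T H\hat\psi / \hat\psi^T\hat\psi$; the critical locus is exactly the non-isotropic eigenvectors of $H$. If $H$ has $n$ distinct eigenvalues, the familiar identity $(\lambda_i - \lambda_j)\psi_i^T\psi_j = 0$ forces the eigenvectors to be pairwise orthogonal with respect to $\psi^T\eta$, and invertibility of $P = [\psi_1 | \cdots | \psi_n]$ combined with diagonality of $P^TP$ forces each $\psi_i^T\psi_i$ to be nonzero. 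Hence for such $H$ the critical locus is exactly $n$ reduced points, which simultaneously reproves $\mathrm{RR\,degree}(\mathbb{P}^{n-1}) = n$ and shows that $H \notin \Sigma_V$.

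The central step is the converse: every $H$ with a repeated eigenvalue lies in $\Sigma_V$. If the geometric multiplicity is $\geq 2$, the eigenspace projectivizes to a positive-dimensional component of the critical locus and $H \in \Sigma_V$ directly. The delicate case is a defective $H$ where algebraic multiplicity exceeds geometric multiplicity. Here I would exploit the observation that any Jordan generalized eigenvector $v$ with $(H - \lambda_0 I)v = \psi_0 \neq 0$ satisfies $\psi_0^T\psi_0 = \psi_0^T(H - \lambda_0 I)v = ((H - \lambda_0 I)\psi_0)^T v = 0$, so the single eigenvector $\psi_0$ is automatically isotropic. Considering a one-parameter family $H_\epsilon$ of generic matrices degenerating to $H$, two non-isotropic critical points collide at the isotropic $\psi_0$, so the fiber $\pi_2^{-1}(H) \subseteq \mathcal{R}_V$ is nonreduced at $\psi_0$. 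This limiting argument is the main technical obstacle, since one must carefully track the closure that defines $\mathcal R_V$ and confirm the resulting scheme-structure drop on the branch locus; it is essentially the Jordan-block analysis for complex symmetric matrices dressed up geometrically.

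Having identified $\Sigma_V$ with the locus $\{\mathrm{disc}_\lambda(p_H(\lambda)) = 0\}$ for $p_H(\lambda) = \det(\lambda I - H)$, the degree follows from a scaling computation: writing $\mathrm{disc}_\lambda(p_H) = \prod_{i<j}(\lambda_i(H) - \lambda_j(H))^2$, under $H \mapsto tH$ each eigenvalue scales as $\lambda_i \mapsto t\lambda_i$, so the discriminant scales as $t^{2\binom{n}{2}} = t^{n(n-1)}$. Thus $\mathrm{disc}_\lambda(p_H)$ is homogeneous of degree $n(n-1)$ in the entries of $H$, giving the claimed degree.
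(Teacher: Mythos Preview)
Your proof is correct and takes essentially the same route as the paper: identify the critical locus with non-isotropic eigenvectors, show the fiber is $n$ reduced points when the spectrum is simple, then split the repeated-eigenvalue case on geometric multiplicity and handle the defective case via a one-parameter degeneration of generic matrices whose eigenvectors collide. You supply a few details the paper leaves implicit---the orthogonality argument forcing eigenvectors of a symmetric matrix with simple spectrum to be non-isotropic, the Jordan-block observation that the lone eigenvector in the defective case is necessarily isotropic, and the homogeneity/scaling computation for the degree $n(n-1)$---but the architecture is the same.
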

\begin{proof}
    The RR correspondence of $\mathbb P^{n-1}$ is 
    \begin{equation*}
        \mathcal R_{\mathbb P^{n-1}} = \overline{\left \{(\psi, H) \in \mathbb P^{n-1} \times \mathbb P({\rm Sym}^2(\mathbb C^n)): \psi \textrm{ is an eigenvector of } H \textrm{ and } \psi^T\psi \neq 0 \right\}}.
    \end{equation*}
    It follows that the fiber of a complex symmetric matrix $H$ is 
    \begin{align*}
        \pi_2^{-1}(H) = \overline{ \{\psi \in \mathbb P^{n-1} : \psi \textrm{ is an eigenvector of } H \textrm{ and } \psi^T\psi \neq 0 \}}.
    \end{align*}
    We seek conditions on $H$ under which this set is singular. 
    If the characteristic polynomial of a matrix $\hat H$ has no double roots, then $\hat H$ is diagonalizable and has $n$ distinct eigenvectors. 
    Thus $\pi_2^{-1}(\hat H)$ consists of $n$ isolated points if the characteristic polynomial of $H$ has no double root.
    Therefore the discriminant of the characteristic polynomial of $\hat H$ contains $\Sigma_{\mathbb P^{n-1}}$ 
    
    Conversely, suppose $\hat H$ has a repeated eigenvalue $\lambda$.
    If the corresponding eigenspace has dimension greater than $1$, then $\pi_2^{-1}(\hat H)$ has a positive dimensional component, so $\hat H$ is in $\Sigma_{\mathbb P^{n-1}}$. 
    If $\lambda$ has geometric multiplicity 1, then the corresponding eigenspace is spanned by a single vector $\hat \psi$. 
    Consider a curve $H: [0,1] \to \mathbb P({\rm Sym^2(\mathbb C^n}))$ such that $\hat H = H(1)$ and $H(t)$ is diagonalizable for $t \in [0,1)$.
    The eigenvalues and eigenvectors of $H(t)$ vary continuously, so there exist functions $\lambda_1, \lambda_2: [0,1] \to \mathbb C$ and $\psi_1,  \psi_2: [0,1] \to \mathbb P^{n-1}$ 
    such that 
    \begin{itemize}
        \item $H(t)\psi_1(t) = \lambda_1(t) \psi_1(t)$ and  $H(t)\psi_2(t) = \lambda_2(t)\psi_2(t)$ for all $t \in [0,1]$,
        \item $\lambda_1(t) \neq \lambda_2(t)$ and $\psi_1(t) \neq \psi_2(t)$ for $t \in [0,1)$, and 
        \item  $\lambda_1(1) = \lambda_2(1) = \lambda$.
    \end{itemize}
    At $t = 1$, we have $\hat H\psi_1(1) = \lambda\psi_1(1)$ and $\hat H\psi_2(1) = \lambda\psi_2(1)$. 
    Since the eigenspace of $\lambda$ is one-dimensional, $\psi_1(1) = \psi_2(1) = \hat \psi$ and hence $\hat \psi$ is a nonreduced point of $\pi^{-1}(\hat H)$. 
    Therefore, if the characteristic polynomial of $\hat H$ has a double root, then $\hat H \in \Sigma_V$.
\end{proof}

\begin{remark}
    If $\hat H$ is not contained in $\Sigma_{\mathbb P^{n-1}}$ then no points in fiber $\pi_2^{-1}(H)$ lie on the isotropic quadric $Q$.
    This is because a symmetric matrix has an isotropic eigenvector if and only if its characteristic polynomial has a repeated eigenvalue  \cite{scott1993}.
    Consequently, for every $\hat H \notin \Sigma_V$, 
    \eqref{eqn:optproblem} has exactly RR degree many critical points.
\end{remark}

\begin{proposition}
    The RR degree of the rational normal curve $C \subseteq \mathbb P^{d}$ of degree $d$ is $2(2d - 1)$. 
    The parametric RR correspondence of the rational normal curve is a hypersurface in $\mathbb P^1 \times \mathbb P({\rm Sym}^2(\mathbb C^{d+1}))$ which is linear in $h_{11}, h_{12}, \ldots, h_{d+1,d+1}$.
    The RR discriminant is a hypersurface of degree $2(4d - 3)$.
\end{proposition}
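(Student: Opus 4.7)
The plan is to work with the standard parametrization $\psi:\mathbb{P}^1 \dashrightarrow C \subseteq \mathbb{P}^d$ given by $[a:b]\mapsto [a^d:a^{d-1}b:\cdots:b^d]$ and to read off all three claims from a single binary form $h(a,b)$.  Set $f(a,b,H)=\psi(a,b)^T H\psi(a,b)$ and $g(a,b)=\psi(a,b)^T\psi(a,b)$.  Both are homogeneous of degree $2d$ in $(a,b)$; crucially, $f$ is \emph{linear} in the entries of $H$ while $g$ does not involve $H$.

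Next, I would analyze the parametric critical ideal of Proposition~\ref{prop:RR degree-rational}.  After clearing the denominator $g^2$, the gradient $\nabla_{(a,b)} R_H$ yields the two forms $F=f_a g - f g_a$ and $G=f_b g - f g_b$, each of degree $4d-1$.  Applying Euler's relation to both $f$ and $g$ (each of weight $2d$) gives $aF+bG = 2d\, fg - 2d\, fg = 0$.  From $aF=-bG$ and $\gcd(a,b)=1$, I conclude that $a\mid G$ and $b\mid F$, so there is a single polynomial
\[
h(a,b,H) \;=\; F/b \;=\; -G/a
\]
of bidegree $(4d-2,1)$ whose vanishing cuts out the parametric critical locus in $\mathbb{P}^1$.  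For generic $H$ one checks that $h$ shares no root with $g$, so the saturation in Proposition~\ref{prop:RR degree-rational} is trivial.  Counting roots of $h$ in $\mathbb{P}^1$ yields the RR degree $4d-2 = 2(2d-1)$, proving claim (i).

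For claim (ii), the parametric RR correspondence $\mathcal{P}_C\subseteq \mathbb{P}^1\times\mathbb{P}({\rm Sym}^2(\mathbb{C}^{d+1}))$ is, by Theorem~\ref{thm:parametric-RR-corr}, irreducible of dimension $\binom{d+2}{2}-1$, hence of codimension $1$ in the ambient product.  The polynomial $h$ defined above vanishes on $\mathcal{P}_C$, is irreducible for generic $H$, and is linear in $H$ because $f$ is.  So $h$ must be the defining equation, and $\mathcal{P}_C$ is the hypersurface $V(h)$, linear in $h_{11},\dots,h_{d+1,d+1}$.

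For claim (iii), I would use Proposition~\ref{prop:param-dis}: since $\dim U = 1$, the discriminant is obtained by projecting the set where the Jacobian of $h$ with respect to $(a,b)$ drops below rank $1$, i.e.\ where $h$ has a repeated root in $\mathbb{P}^1$.  This is precisely the classical discriminant of the binary form $h$ regarded as a polynomial in $(a,b)$.  The discriminant of a generic binary form of degree $n$ is a polynomial of degree $2n-2$ in the coefficients of the form; with $n=4d-2$ this gives degree $2(4d-2)-2 = 2(4d-3)$.  Because the coefficients of $h$ are linear forms in $H$, the pullback is a polynomial of degree $2(4d-3)$ in the entries of $H$, establishing the claim.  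The main technical point to verify is that $h$ is sufficiently generic as a binary form --- that is, its coefficients, which are linear combinations of the $h_{ij}$, are linearly independent and that the classical discriminant does not vanish identically on this linear image --- which can be confirmed either by producing a single $H$ for which $h$ has $4d-2$ distinct roots, or by an irreducibility argument using the dimension count in Theorem~\ref{thm:parametric-RR-corr}.
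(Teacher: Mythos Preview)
Your proposal is correct and follows essentially the same route as the paper: both use the standard parametrization from $\mathbb{P}^1$, identify the parametric RR correspondence as a hypersurface via the dimension count in Theorem~\ref{thm:parametric-RR-corr}, and then obtain the discriminant degree as the classical discriminant of a binary form of degree $2(2d-1)$ with coefficients linear in $H$. The one substantive difference is in claim (i): the paper simply cites \cite[Proposition 3.6]{SSW25} for the RR degree and deduces the multidegree $(2(2d-1),1)$ from the vector-bundle fiber structure, whereas you give a self-contained derivation by applying Euler's identity to show $aF+bG=0$ and extracting the single form $h=F/b$ of bidegree $(4d-2,1)$ directly --- this is more explicit and avoids the external reference, at the cost of a short additional computation.
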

\begin{proof}
    The first claim follows from \cite[Proposition 3.6]{SSW25}. For the RR correspondence we use the usual parametrization from  $\mathbb{P}^1$. 
    Since the dimension of the parametric RR correspondence is $\binom{d+2}{2}-1$, it is a hypersurface in $\mathbb P^1 \times \mathbb P({\rm Sym}^2(\mathbb C^{d+1}))$. For each $ p\in \mathbb{P}^1$, 
    the fiber $\gamma_1^{-1}(p) \subseteq \mathcal P_{C}$ is a linear space. 
    Therefore the multidegree of this hypersurface is $(2(2d-1),1)$. 
    By Proposition~\ref{prop:param-dis}, the RR discriminant is the discriminant of this binary form of degree $2(2d-1)$ whose coefficients are linear in $H$. Therefore its degree is $2(2(2d-1))-2 = 2(4d-3)$.
\end{proof}

We now turn to the structure of the discriminant. 
The RR discriminant is generally not irreducible.
Let $\mathcal R^{\rm ram}_V = \{
        (\psi, H) \in \mathcal R_V : 
        {\rm rank}\,\, J_\psi(\mathcal R_V) \leq n-2\}$
denote the ramification locus of the projection $\pi_2$.         
We can write $\mathcal R^{\rm ram}_V$ as the union 
\begin{equation*}
    \mathcal R^{\rm ram}_V = 
    (\mathcal R^{\rm ram}_V \cap Q) \cup 
    \overline{(\mathcal R^{\rm ram}_V \setminus Q)}
\end{equation*}
where $Q$ denotes $\mathcal V(\psi^T\psi) \subseteq \mathbb P^{n-1} \times \mathbb P({\rm Sym}(\mathbb C^n))$.
The projections of these components $\pi_2(\mathcal R^{\rm ram}_V \cap Q)$ and $\pi_2(\overline{(\mathcal R^{\rm ram}_V \setminus Q)})$ are often, but not always, hypersurfaces.
We call the set $\pi_2(\mathcal R^{\rm ram}_V \cap Q)$ the {\em isotropic part} of the discriminant and we call $\pi_2(\overline{(\mathcal R^{\rm ram}_V \setminus Q)})$ the {\em nonisotropic part} of the discriminant. 
We illustrate this phenomenon with some small examples.

\begin{example}\label{ex:Pn-dis}
    The nonisotropic part of $\Sigma_{\mathbb P^{n-1}}$ is a proper subvariety of $\Sigma_{\mathbb P^{n-1}}$ with codimension greater than 1 and describes matrices with diagonal blocks. 
    The isotropic part of $\Sigma_{\mathbb P^{n-1}}$ is equal to $\Sigma_{\mathbb P^{n-1}}$.
    Its defining polynomial is a sum of squares; see  \cite{ilyushechkin}. 
    For example, $\Sigma_{\mathbb P^1}$ is the hypersurface cut out by $(h_{11} - h_{22})^2 + 4h_{12}^2$. 
    The nonisotropic part $\mathcal V(h_{12}, h_{11} - h_{22})$ of $\Sigma_{\mathbb P^1}$ has codimension 2 and consists of scalar multiples of the $2\times 2$ identity matrix.
\end{example}

\begin{example}\label{ex:quadric-dis}
    The rational normal quadric has RR degree 6. 
    The RR discriminant is a degree-10 hypersurface.
    Its defining polynomial has three irreducible factors over $\mathbb{Q}$. The two isotropic factors are
    \begin{align*}
&3(2h_{12}-2h_{13} - h_{22} + h_{33})^2 + (2h_{11}-2h_{12}-2h_{13}-  h_{22} + 4 h_{23} - h_{33})^2  \textrm{ and }\\
&3(2h_{12}+ 2h_{13} + h_{22} - h_{33})^2 + (2h_{11}+2h_{12}-2h_{13}-  h_{22} - 4 h_{23} - h_{33})^2.
\end{align*}
The nonisotropic factor (see \cite{zenodo_our}) has degree $6$ and divides $\mathbb P({\rm Sym}^2(\mathbb R^3))$ into two regions. 
We computed the regions in the complement of the nonisotropic factor using the \verb+Julia+ software \verb+HypersurfaceRegions.jl+ \cite{BSW25}. 
In one region, \eqref{eqn:optproblem} has 2 real critical points; in the other it has 4 real critical points. 
For example, \eqref{eqn:optproblem}
has 2 real critical points for the left matrix and 4 real critical points for the right matrix:
\begin{align*}
    H = \begin{footnotesize}
        \begin{pmatrix}
        1 & 3 & 5\\ 3 & 7 & 9\\ 5 & 9 & 11
    \end{pmatrix}, \end{footnotesize}
    \qquad
    H =  \begin{footnotesize} \begin{pmatrix}
        8 & 1 & 13 \\ 1 & 11 & 7 \\ 13 & 7 & 5
    \end{pmatrix} \end{footnotesize}.
\end{align*}
This discriminant also appears in \cite[Example 5.8]{SSW25}.
\end{example}

In the following examples, we were not able to compute the full discriminant polynomials. 
The degrees were computed symbolically by intersecting the discriminant hypersurface with a line. 
We can give a lower bound on the number of regions in the discriminant complement by computing all critical points for \eqref{eqn:opt-param} for random real matrices and counting the numbers of real solutions that arise. 

\begin{example}\label{ex:cubic-dis}
     The twisted cubic has RR degree 10. 
     The RR discriminant has degree 18
     and its defining polynomial factors into three components over $\mathbb Q$. 
    The first isotropic factor is a sum of squares of rank 2: 
    \begin{equation*}
        (h_{11} - 2h_{13} - h_{22} + 2h_{24} + h_{33} - h_{44} )^2 
        + 
        4(h_{12} - h_{14} - h_{23} + h_{34})^2.
    \end{equation*}
    The second isotropic factor (see \cite{zenodo_our}) is a quartic and a sum of squares of rank at most 4. 
    We checked this using the \verb+Julia+ packages \verb+COSMO.jl+ \cite{cosmo}, \verb+JuMP.jl+ \cite{jump}, and \verb+SumOfSquares.jl+~\cite{sos}.
    The nonisotropic factor has degree 12 and divides $\mathbb P({\rm Sym}^2(\mathbb R^4))$ into at least three regions, exhibited by the fact that \eqref{eqn:optproblem} can have 2, 4, or 6 real critical points. 
\end{example}

\begin{conjecture}~\label{conj:rnc-disc-deg}
    The nonisotropic part of the RR discriminant of the rational normal curve of degree $d \geq 2$ is a hypersurface of degree $6(d-1)$.
    The isotropic part has degree $2d$.
\end{conjecture}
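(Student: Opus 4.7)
The plan is to work parametrically in the affine chart $[1:t]$ of $\mathbb{P}^{1}$. Write $\psi(t)=(1,t,\ldots,t^{d})$, $D(t):=\psi(t)^{\top}\psi(t)$, and $N(t,H):=\psi(t)^{\top}H\psi(t)$, each of degree $2d$ in $t$, with $D$ independent of $H$ and $N$ linear in $H$. A direct computation from Proposition~\ref{prop:RR degree-rational} shows that $\mathcal{P}_{C}$ is cut out by the polynomial
\[
\tilde G(t,H)\;=\;N'(t,H)\,D(t)\;-\;N(t,H)\,D'(t),
\]
where the prime denotes differentiation in $t$; this $\tilde G$ has degree $4d-2$ in $t$ (the would-be leading coefficient cancels by Euler's identity) and is linear in $H$. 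By Proposition~\ref{prop:param-dis}, $\Sigma_{C}$ is the zero locus of $\operatorname{disc}_{t}(\tilde G)\in\mathbb{C}[H]$, a polynomial of degree $2(4d-2)-2=8d-6$.

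The crux is to show that for each of the $2d$ roots $t_{0}$ of $D$, the linear form $N(t_{0},H)\in\mathbb{C}[H]$ divides $\operatorname{disc}_{t}(\tilde G)$. I would verify this by direct computation: if $D(t_{0})=0$ and $N(t_{0},H)=0$, then every summand of $\tilde G(t_{0},H)=N'(t_{0},H)D(t_{0})-N(t_{0},H)D'(t_{0})$ and of $\tilde G'(t_{0},H)=N''(t_{0},H)D(t_{0})-N(t_{0},H)D''(t_{0})$ vanishes, so $t_{0}$ is automatically at least a double root of $\tilde G(\cdot,H)$, forcing $\operatorname{disc}_{t}(\tilde G)(H)=0$. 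This simultaneously shows $\mathcal{P}_{C}\cap Q\subseteq \mathcal{P}_{C}^{\mathrm{ram}}$, placing each hyperplane $\{N(t_{0},H)=0\}$ in the isotropic part of $\Sigma_{C}$. Since $\psi$ is injective, the rank-one symmetric matrices $\psi(t_{0})\psi(t_{0})^{\top}$ are pairwise non-proportional over the $2d$ roots of $D$, so the $N(t_{0},H)$ are pairwise non-proportional linear forms, and their product has degree exactly $2d$.

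To pin down the multiplicities I would Taylor-expand $\tilde G$ around $t_{0}$: the coefficient of $(t-t_{0})^{2}$ is $\tfrac{1}{2}\bigl(N''(t_{0},H)D'(t_{0})-N'(t_{0},H)D''(t_{0})\bigr)$, a linear form in $H$ that is generically independent of $N(t_{0},H)$. Thus on a generic $H$ of the hyperplane $\{N(t_{0},H)=0\}$ the root at $t_{0}$ is exactly double, not higher, and each factor $N(t_{0},H)$ enters $\operatorname{disc}_{t}(\tilde G)$ with multiplicity one. The isotropic part therefore has degree exactly $2d$, and the nonisotropic part, being the complementary divisor of the degree-$(8d-6)$ discriminant, has degree $6(d-1)$. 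The main obstacle is the multiplicity-one check together with verifying that no component of $\mathcal{P}_{C}^{\mathrm{ram}}\setminus Q$ accidentally projects onto one of the isotropic hyperplanes; both reduce to a general-position argument for the $2d$ roots of $D$, which are specific roots of unity whose images $\psi(t_{0})$ span sufficiently generic configurations in $\mathbb{C}^{d+1}$.
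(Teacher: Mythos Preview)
The paper does not prove this statement: it is stated as a conjecture and is only verified numerically for $d\le 10$. There is therefore no proof in the paper to compare against; your proposal is a genuine attempt to settle an open question that the authors left open.

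Your strategy is sound and goes well beyond what the paper offers. Working in the affine chart, you correctly identify the parametric correspondence $\mathcal{P}_C$ with the zero locus of $\tilde G=N'D-ND'$, a polynomial of degree $4d-2$ in $t$ and linear in $H$, so that $\Sigma_C$ is cut out by $\operatorname{disc}_t(\tilde G)$ of degree $8d-6$ in $H$; this is exactly the content of Proposition~\ref{prop:param-dis} combined with the paper's Proposition on rational normal curves. Your computation $\tilde G'=N''D-ND''$ is correct, and it immediately gives that every pair $(t_0,H)$ with $D(t_0)=N(t_0,H)=0$ lies in $\mathcal{P}_C^{\mathrm{ram}}$. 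Since $D(t)=(t^{2d+2}-1)/(t^2-1)$ has simple roots, $D'(t_0)\neq 0$, and hence $\mathcal{P}_C\cap Q=\{(t_0,H):D(t_0)=0,\ N(t_0,H)=0\}$ exactly. This pins down the isotropic part set-theoretically as the union of the $2d$ pairwise distinct hyperplanes $\{N(t_0,H)=0\}$, which already establishes the second sentence of the conjecture if ``degree'' is read as reduced degree.

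The genuine gap, which you yourself flag, is the multiplicity-one check; without it you obtain only $\deg(\text{nonisotropic part})\le 6(d-1)$. Your heuristic is the right one: the second Taylor coefficient $\tfrac12\tilde G''(t_0,H)=\tfrac12\bigl(N''(t_0,H)D'(t_0)-N'(t_0,H)D''(t_0)\bigr)$ is linear in $H$, and you need it not to be proportional to $N(t_0,H)$. A concrete way to finish this: $N(t_0,H)=\langle\psi(t_0)\psi(t_0)^{\top},H\rangle$ is dual to a rank-one symmetric matrix, whereas $\tilde G''(t_0,H)$ is dual to a matrix containing the term $2D'(t_0)\,\psi'(t_0)\psi'(t_0)^{\top}$ plus terms lying in $\psi(t_0)\cdot\mathbb{C}^{d+1}+\mathbb{C}^{d+1}\cdot\psi(t_0)^{\top}$; since $\psi'(t_0)=(0,1,2t_0,\ldots,dt_0^{d-1})$ is not proportional to $\psi(t_0)$ and $D'(t_0)\neq 0$, this matrix has rank at least~$2$ and cannot be a scalar multiple of $\psi(t_0)\psi(t_0)^{\top}$. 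Carrying this out uniformly over the $2d$ roots (which are the $(2d{+}2)$-th roots of unity other than $\pm 1$) would close the gap. Your second worry---that a component of $\overline{\mathcal{P}_C^{\mathrm{ram}}\setminus Q}$ might project onto one of the isotropic hyperplanes---is then handled automatically: once the $2d$ linear factors are removed with multiplicity one, any irreducible factor of the residual coinciding with some $N(t_0,\cdot)$ would force that linear form to appear with multiplicity $\ge 2$ in $\operatorname{disc}_t(\tilde G)$, contradicting what you just proved.
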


We have verified Conjecture~\ref{conj:rnc-disc-deg} numerically for $d \leq 10$; see \cite{zenodo_our} for computations. 

\begin{example}
    The RR degree of $\mathbb P^1 \times \mathbb P^1$ is 8.
    The RR discriminant of $\mathbb P^1 \times \mathbb P^1$ is a hypersurface of degree 32.
    The isotropic part has degree 8.
    The nonisotropic part has degree 24. 
    The discriminant divides $\mathbb P({\rm Sym}^2(\mathbb R^4))$ into at least three regions: \eqref{eqn:optproblem} can have 4, 6, or 8 real critical points for $\mathbb{P}^1 \times \mathbb{P}^1$. 
\end{example}

\begin{example}
    Consider the Hirzebruch surface parametrized by $\psi: \mathbb C^2 \to \mathbb P^4$ defined by $(a, b) \mapsto [1:a:b:ab:a^2b]$.
    The RR degree of this variety is 20.
    The RR correspondence is a $14$-dimensional subvariety of $\mathbb{P}^4 \times \mathbb P^{14}$. 
    The RR discriminant is a hypersurface of degree 74.
    Its isotropic part has degree 26 and its nonisotropic part has degree 48.
    The discriminant divides $\mathbb P({\rm Sym}^2(\mathbb R^5))$ into at least 5 regions: 
    the optimization problem \eqref{eqn:optproblem} can have 4, 6, 8, 10, or 12 real critical points.
\end{example}

It is not clear whether there can exist $(\psi, H) \in \mathcal R^{\rm ram}_V \cap Q$ with real $H$. 
The vector $\psi$ cannot be real, since $\psi^T\psi = 0$ is a sum of squares. 
In the examples we have computed, the variety $\pi_2(\mathcal R^{\rm ram}_V \cap Q)$ has no real points and its defining polynomial factors as a sum of squares over~$\mathbb Q$. 
We therefore make the following conjecture, which we have verified for the rational quadric (Example~\ref{ex:quadric-dis}), the twisted cubic (Example~\ref{ex:cubic-dis}), and projective space (Example~\ref{ex:Pn-dis}). 
\begin{conjecture}\label{conj:sos}
    Every irreducible isotropic factor of the defining polynomial of the RR discriminant is a sum of squares. 
\end{conjecture}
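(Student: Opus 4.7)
The plan is to prove the stronger statement that every irreducible $\mathbb{R}$-factor $f$ of the isotropic part $\pi_2(\mathcal{R}_V^{\rm ram} \cap Q)$ of the RR discriminant factors over $\mathbb{C}$ as $f = g \cdot \bar g$, where $g \in \mathbb{C}[H]$ is $\mathbb{C}$-irreducible and $\bar g$ denotes its coefficient-conjugate. Granted such a factorization, writing $g = u + iv$ with $u, v \in \mathbb{R}[H]$ immediately yields $f = (u+iv)(u-iv) = u^2 + v^2$, exhibiting $f$ as a sum of two squares. This is the Galois-theoretic dichotomy for real polynomials: an irreducible $f\in\mathbb{R}[H]$ is either absolutely irreducible or factors in this form, and our task is to rule out the absolutely irreducible case for isotropic factors.

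The central geometric claim needed is that no absolutely irreducible component $Z$ of the complex variety $\mathcal{R}_V^{\rm ram} \cap Q$ is fixed by the involution $\iota \colon (\psi, H) \mapsto (\bar\psi, H)$; instead, such components occur in Galois-conjugate pairs $Z, \iota(Z)$. The heuristic is that $Q$ contains no nonzero real points, so for every real $\hat H \in \pi_2(Z)$ the involution $\iota$ acts freely on the fiber of $\pi_2|_{\mathcal{R}_V^{\rm ram} \cap Q}$ over $\hat H$. Once $Z \neq \iota(Z)$ is established, the irreducible $\mathbb{R}$-factor $f$ corresponding to $\pi_2(Z)$ has exactly two absolutely irreducible $\mathbb{C}$-components---namely the defining polynomials of $\pi_2(Z)$ and $\pi_2(\iota(Z))$---which are Galois conjugate, yielding the factorization $f = g \cdot \bar g$.

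Making $Z \neq \iota(Z)$ rigorous is the main obstacle: a real variety can be absolutely irreducible and have empty real locus (e.g.\ $x^2 + y^2 + 1 = 0$), so the mere absence of real points on $Q$ does not immediately preclude $\iota(Z) = Z$. The route I would take uses the parametric description from Section~\ref{sec:param-crit-loc}: by Proposition~\ref{prop:param-lagrange} and Corollary~\ref{cor:param-crit=lagrange}, the parametric critical locus is cut out by the explicit $2$-minors of $\mathrm{Jac}(\psi)^T \cdot \bigl(\psi(x)\ \ H\psi(x)\bigr)$. One lifts $\iota$ to the parameter space $U$ and verifies, via an \'etale-local monodromy analysis of $\pi_2$ over a generic smooth real point of the isotropic discriminant, that a pair of colliding isotropic critical points must be Galois conjugate: the vanishing of $\psi(x)^T\psi(x)$ prevents a real isotropic root, while the collision equations are real, so monodromy around the discriminant must swap complex-conjugate branches. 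This swap forces the two local components of the critical locus at the collision to be exchanged by $\iota$ rather than each stabilized.

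An alternative---likely more tractable in the tensor train setting---is to sidestep the abstract irreducibility question and construct explicit sum-of-squares certificates from the structure of the Rayleigh-Ritz equations themselves. This would generalize Ilyushechkin's classical sum-of-squares representation of the discriminant of the characteristic polynomial, which already yields the conjecture for $V = \mathbb{P}^{n-1}$ (Example~\ref{ex:Pn-dis}), and could be extended to rational normal curves, tensor train varieties, and further cases by exhibiting certificates analogous to those already visible in Examples~\ref{ex:quadric-dis} and~\ref{ex:cubic-dis}.
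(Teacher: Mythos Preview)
This statement is labeled a \emph{conjecture} in the paper; no proof is offered, only verification in a few small cases (Examples~\ref{ex:Pn-dis}, \ref{ex:quadric-dis}, \ref{ex:cubic-dis}). There is therefore no argument in the paper to compare your proposal against.

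Your plan has a fatal flaw: the ``stronger statement'' you set out to prove---that every irreducible real isotropic factor $f$ splits over $\mathbb{C}$ as $g\bar g$ and is therefore a sum of \emph{two} squares---is false already for $V=\mathbb{P}^{n-1}$ with $n\ge 3$. By Example~\ref{ex:Pn-dis} the isotropic part of $\Sigma_{\mathbb{P}^{n-1}}$ is the full discriminant $D_n$ of the characteristic polynomial of a symmetric matrix, and for $n\ge 3$ this polynomial is absolutely irreducible. Indeed, using the equivalence recorded in the paper (a complex symmetric matrix has a repeated eigenvalue iff it has an isotropic eigenvector, cf.\ \cite{scott1993}), the hypersurface $V(D_n)$ is the image of the incidence $\{(A,[v]):Av\in\mathbb{C}v,\ v^Tv=0\}$, a vector bundle over the quadric $\{v^Tv=0\}\subset\mathbb{P}^{n-1}$; that quadric is irreducible once $n\ge 3$, so $V(D_n)$ is irreducible, and $D_n$ is easily seen not to be a proper power. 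Hence $D_n$ cannot factor as $g\bar g$, yet Ilyushechkin \cite{ilyushechkin} exhibits it as a sum of squares---necessarily with more than two terms. The same obstruction is hinted at in Example~\ref{ex:cubic-dis}, where one isotropic factor for the twisted cubic is reported with SOS rank up to four rather than two.

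So your ``central geometric claim'' that the $\mathbb{C}$-components of $\mathcal{R}_V^{\rm ram}\cap Q$ (or of their $\pi_2$-images) are swapped in conjugate pairs is not merely hard to establish: it is false in general, and no monodromy argument can repair it. (A smaller technical slip: the map $(\psi,H)\mapsto(\bar\psi,H)$ you use is not the Galois involution on the correspondence; only $(\psi,H)\mapsto(\bar\psi,\bar H)$ is.) Your alternative---constructing explicit higher-rank SOS certificates in the spirit of Ilyushechkin---points in the right direction, but as written it restates the conjecture rather than offering a proof.
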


    One motivation for studying the RR discriminant of a variety 
    is to compute the average RR degree, that is, the expected number of real critical points. 
    The role of the discriminant in this computation is explained in the Euclidean distance case in \cite{DHGSR}. 
    In particular, in this application we are only interested in the real locus of the discriminant. 
    Thus, if Conjecture~\ref{conj:sos} holds, then only the nonisotropic part of the discriminant is relevant.
    We therefore present degrees of the nonisotropic parts of RR discriminants of various varieties in Tables~\ref{tab:RRdegtensors} and \ref{tab:RR-disc-degs}, and make the following conjecture.
    \begin{conjecture}
        The nonisotropic part of the RR discriminant of $\mathbb P^1 \times \mathbb P^{n-1}$ is a hypersurface of degree $24 \binom{n+1}{3}$.
        The nonisotropic part of the RR discriminant of $\mathbb P^2 \times \mathbb P^{n-1}$ is a hypersurface of degree $24n^2 \binom{n}{2}$.
    \end{conjecture}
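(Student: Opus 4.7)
My plan is to derive both formulas simultaneously by computing intersection numbers on the parametric RR correspondence $\mathcal P_V$ of $V = \mathbb P^{m-1} \times \mathbb P^{n-1}$ (for $m \in \{2,3\}$) and then invoking Proposition~\ref{prop:param-dis} together with the identification of $\Sigma_V$ with the branch locus of $\gamma_2$ proved at the end of Section~\ref{sec:RR-dis}. The two cases are closely related, and for $\{m,n\}=\{2,3\}$ the two formulas must agree, which both serves as a consistency check and suggests a uniform Segre-product argument that specializes.

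\textbf{Main steps.} First, I would write $\mathcal P_V$ explicitly as the vanishing of the $2\times 2$ minors of the matrix $\mathrm{Jac}(\psi)^T[\psi(x)\mid H\psi(x)]$ given by Proposition~\ref{prop:param-lagrange}, with $\psi$ the Segre parametrization of $V$. Here $\mathrm{Jac}(\psi)$ is the $mn\times(m+n)$ Jacobian (in homogeneous coordinates) and the entries of the $(m+n)\times 2$ product are bihomogeneous in $x$ and linear in the entries of~$H$. Second, I would extract the class $[\mathcal P_V]\in A^\bullet(U\times \mathbb P^M)$ (where $U=\mathbb P^{m-1}\times \mathbb P^{n-1}$ and $M=\binom{mn+1}{2}-1$) by resolving the $2\times 2$-minor ideal with the Eagon--Northcott / Buchsbaum--Rim complex and doing a Schubert-calculus intersection computation in terms of the hyperplane classes of the three projective factors. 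Third, I would compute the class of the ramification divisor $R\subset \mathcal P_V$ of $\gamma_2$ via the adjunction formula, using the normal bundle of $\mathcal P_V$ inside $U\times \mathbb P^M$, and then push forward: the total branch degree equals $\gamma_{2\ast}[R]$ read off the top intersection number against $h^{M-1}$, where $h$ is the hyperplane class on~$\mathbb P^M$. Finally, I would isolate the nonisotropic contribution by computing the intersection of $\mathcal P_V$ with the preimage $\gamma_1^{-1}(Q)$ of the isotropic quadric $Q\subset V$ and subtracting its contribution from the total branch degree.

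\textbf{Expected main obstacle.} The principal difficulty is that the matrix $\mathrm{Jac}(\psi)^T[\psi(x)\mid H\psi(x)]$ is far from a generic matrix of bihomogeneous entries: the Segre factorization $\psi(x)=a\otimes b$ imposes strong relations among its entries. As a consequence, the ideal of $2\times 2$ minors may not resolve with the expected codimension, so the Eagon--Northcott complex could have homology that must be tracked via an excess-intersection formula. A closely related issue is that $\gamma_1^{-1}(Q)$ is contained set-theoretically in $\mathcal P_V$ (when $\psi(x)^T\psi(x)=0$ the saturation in Proposition~\ref{prop:param-lagrange} is forced), so there will be an embedded or excess component along $Q$ contributing to the raw Chern class calculation but \emph{not} to the nonisotropic part of $\Sigma_V$. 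Quantifying this excess precisely (probably using $[Q]=2(\alpha+\beta)$ with $\alpha,\beta$ the hyperplane classes of the two factors of $U$) and matching it with the isotropic degrees—namely $8$ for $\mathbb P^1\times \mathbb P^1$ and the analogous contributions in general—will constitute the bulk of the work.

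\textbf{Final remark.} Once the class $[\mathcal P_V]$ is correctly understood and the excess along $Q$ has been subtracted, the two formulas $24\binom{n+1}{3}$ and $24 n^2\binom{n}{2}$ should appear as the resulting polynomials in $n$, and I would verify the formulas for $2\le n\le 5$ (say) against the symbolic computations cited in Section~\ref{sec:RR-dis} to confirm that constants and leading terms have been correctly identified before writing down the closed-form proof.
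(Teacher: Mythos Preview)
The paper does \emph{not} prove this statement: it is explicitly stated as a conjecture, supported only by the numerical discriminant degrees reported in Table~\ref{tab:RR-disc-degs} (for $\mathbb{P}^1\times\mathbb{P}^{n-1}$ up to $n=7$ and $\mathbb{P}^2\times\mathbb{P}^{n-1}$ up to $n=4$). There is therefore no paper proof to compare your proposal against.

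As for your proposal itself, it is a reasonable strategic outline but not a proof. You correctly identify the architecture one would want—compute the class of $\mathcal P_V$, then the ramification divisor of $\gamma_2$, then push forward and subtract the isotropic contribution—but none of these steps is actually carried out. Your ``Expected main obstacle'' paragraph is honest: the minor ideal of $\mathrm{Jac}(\psi)^T\begin{pmatrix}\psi(x)&H\psi(x)\end{pmatrix}$ is highly non-generic and the excess along $\gamma_1^{-1}(Q)$ must be quantified precisely, and you do not do either. Moreover, your plan to split off the isotropic part presupposes a clean additive decomposition of the branch degree into isotropic and nonisotropic pieces, which requires knowing that the two parts of $\mathcal R_V^{\rm ram}$ have no shared components and that the isotropic part is itself a hypersurface of computable degree—neither is established here or in the paper for general $n$. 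Finally, your ``Final remark'' about checking $2\le n\le 5$ against the paper's computations would only reproduce the numerical evidence the conjecture is already based on; it would not constitute a proof. In short: the approach is plausible, but what you have written is a proof \emph{plan}, and the conjecture remains open.
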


\section{Bombieri-Weyl Correspondence}\label{sec:BW}
In this section, we will introduce yet another correspondence that essentially parametrizes pairs $(\psi, H) \in V \times \mathrm{Sym}^2(\mathbb{C}^n)$ where $\psi$ is a critical point of the Rayleigh-Ritz optimization problem \eqref{eqn:optproblem}. 
For this, we will consider $V$ in its second Veronese embedding $X = \nu_2(V)$ and transform the Rayleigh-Ritz optimization problem \eqref{eqn:optproblem} to a distance optimization problem using {\it Bombieri-Weyl} distance. 
The equivalence of these two optimization problems has been introduced and fully developed in \cite[Section 2]{SSW25}. We give a very brief introduction to the Bombieri-Weyl distance optimization and cite the main result we will use. 
This optimization problem is a variant of the Euclidean distance optimization problem.
We will show that, as in the Euclidean distance case, the \emph{Bombieri-Weyl correspondence} (BW correspondence) of a parametrized variety is itself parametrized. We include this with the hope that it will be useful for computing the \emph{average} number of \emph{real} critical points of \eqref{eqn:optproblem}; see \cite[Section 4]{DHGSR}.

First, note that the Rayleigh-Ritz optimization problem \eqref{eqn:optproblem} is equivalent to the following optimization problem
\begin{align}
    \label{eqn:optsphere}
    &\textrm{minimize} \,\, f_H(\psi) = \psi^T H \psi \quad \, 
    \textrm{subject to} \,\, \psi \in C(V_\mathbb{R}) \cap S^{n-1}
\end{align}
where $C(V_\mathbb{R}) \subset \mathbb{R}^n$ is the cone over the projective variety $V \subset \mathbb{P}^{n-1}$ and $S^{n-1}$ is the $(n-1)$-dimensional sphere in $\mathbb{R}^n$. We also point out that each critical point $\hat{\psi}$ of \eqref{eqn:optproblem} corresponds to two critical points $\pm \frac{1}{(\hat{\psi})^T \hat{\psi}} \hat{\psi}$ of \eqref{eqn:optsphere}.

Recall that any homogeneous form $f$ of degree $d$ in $n$ variables with real coefficients can be represented by an element of  $\Sym^d(\R^n)$. This representation as a symmetric tensor can be written as 
$$ f(x) = \sum_{|\alpha| = d} \binom{d}{\alpha}f_\alpha x^\alpha,$$
where $\binom{d}{\alpha}$ is the multinomial coefficient $\frac{d!}{\alpha_1 ! \cdots \alpha_n !}$. Hence $f$ is identified with $(f_\alpha)_{|\alpha| = d}$. 
The case we will work with 
is $d=2$. A homogeneous quadratic polynomial in $n$ variables and the corresponding symmetric matrix of this quadratic form are
\begin{equation} \label{eq:quadratic}
\sum_{i=1}^n h_{ii} x_i^2 + \sum_{i<j} 2 h_{ij}x_ix_j,  \quad \quad  \quad
H =  \begin{footnotesize} \begin{pmatrix}
h_{11} &  h_{12} & \cdots &  h_{1n} \\
h_{12} & h_{22} & \cdots & h_{2n} \\
\vdots &    &  \ddots & \vdots \\
h_{1n} & h_{2n} & \cdots & h_{nn}
\end{pmatrix} \end{footnotesize}.
\end{equation}
From now on we will identify a symmetric matrix $H$ with the 
quadratic form $f_H = x^T H x$.
\begin{definition}
The Bombieri-Weyl inner product between two 
homogeneous forms $f$ and $g$ in $\R[x_1, \ldots, x_n]_d$ is 
$$\langle f, g\rangle_{\mathrm{BW}} := \sum_{|\alpha|=d} \binom{d}{\alpha} f_\alpha g_\alpha.$$
The corresponding Bombieri-Weyl norm of $f$
is the square root of 
$$ q_{\mathrm{BW}}(f) := \sum_{|\alpha|=d} \binom{d}{\alpha} f_\alpha^2.$$ 
If $f$ is a quadratic polynomial as in \eqref{eq:quadratic} then
 $q_{\mathrm{BW}}(f) = \mathrm{trace}(H^2)$.
\end{definition}
\begin{proposition} \cite[Proposition 2.6]{SSW25} \label{prop:SSW25} 
Let $V \subset \mathbb{P}^{n-1}$ be an irreducible variety and let $X = \nu_2(V)$. A point $\lambda \nu_2(\psi) \in C(X)$ with 
$\psi \in S^{n-1}$ and $\lambda \in \mathbb{C}$ is a critical point of 
 \begin{align}
    \label{eqn:optBW}
    &\textrm{minimize} \,\, q_{\mathrm{BW}}(f_H- g)  \quad \, 
    \textrm{subject to} \,\, g \in C(X) 
\end{align}  
if and only if $\psi$ is a critical point of \eqref{eqn:optsphere} with 
$\lambda = f_H(\psi) = \psi^T H\psi$.
\end{proposition}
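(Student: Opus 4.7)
The plan is to identify both optimization problems with computations on symmetric $n \times n$ matrices and match their first-order optimality conditions. Via the identification in \eqref{eq:quadratic}, the Bombieri-Weyl inner product becomes the Frobenius inner product: a short multinomial check shows $\langle f_A, f_B\rangle_{\mathrm{BW}} = \mathrm{trace}(AB)$ for symmetric $A, B$. Under this correspondence, $\nu_2(\psi)$ is identified with the rank-one matrix $\psi\psi^T$ (whose quadratic form is $(\psi^T x)^2$), and the cone $C(X)$ is parametrized by $(\psi,\lambda) \mapsto \lambda\psi\psi^T$ with $\psi \in C(V)$ and $\lambda \in \mathbb{C}$.

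Next, I would compute the BW optimality condition at a point $\lambda\psi\psi^T$ with $\psi \in S^{n-1}$. The tangent space of $C(X)$ at this point is spanned by the matrices $v\psi^T + \psi v^T$ as $v$ ranges over $T_\psi C(V)$; note that the radial direction (varying $\lambda$) corresponds to $v = \psi$, which lies in $T_\psi C(V)$ because $C(V)$ is a cone. BW-orthogonality of the residual $H - \lambda\psi\psi^T$ to each such tangent vector becomes
\begin{equation*}
    \mathrm{trace}\bigl((H - \lambda\psi\psi^T)(v\psi^T + \psi v^T)\bigr) = 0 \quad \text{for all } v \in T_\psi C(V).
\end{equation*}
By trace cyclicity and $\psi^T\psi = 1$, this reduces to $2v^T(H\psi - \lambda\psi) = 0$, so the BW critical condition is simply $(H\psi - \lambda\psi) \perp T_\psi C(V)$.

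To finish, I would observe that this is exactly the Lagrange condition for the sphere-constrained problem \eqref{eqn:optsphere}. Specializing $v = \psi$ in the displayed equation forces $\psi^T H\psi - \lambda = 0$, recovering the asserted identity $\lambda = f_H(\psi)$. The remaining orthogonality $(H\psi - \lambda\psi) \perp T_\psi C(V)$ is precisely the Lagrange condition for $\psi$ to be a critical point of $\psi^T H\psi$ on $C(V_\mathbb{R}) \cap S^{n-1}$ with the scalar $\lambda$ playing the role of the sphere-multiplier. Reading this chain of equivalences forwards gives the ``only if'' direction and backwards gives ``if''.

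I do not foresee serious obstacles: the argument is essentially bookkeeping with trace identities. The one step that needs care is the treatment of $T_\psi C(V)$ at singular points of $V$; since critical points are defined only at smooth $\psi \in V_{\mathrm{reg}}$, we may restrict to that locus where $T_\psi C(V)$ and the differential $d\nu_2|_\psi$ behave as expected. The key structural point is that $\psi \in T_\psi C(V)$ (from $C(V)$ being a cone) is exactly what couples the $\lambda$-equation $\lambda = \psi^T H \psi$ to the genuinely geometric tangent conditions, and this is what makes the two critical-point problems agree.
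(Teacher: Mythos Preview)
The paper does not actually prove this proposition; it is stated with a citation to \cite[Proposition~2.6]{SSW25} and no proof is given. Your argument is correct and self-contained: identifying the BW inner product with the Frobenius trace pairing, computing the tangent space of $C(X)$ at $\lambda\psi\psi^T$ as $\{v\psi^T+\psi v^T : v \in T_\psi C(V)\}$, and reducing the BW-orthogonality of the residual to $(H\psi-\lambda\psi)\perp T_\psi C(V)$ is exactly the right chain of reductions. The observation that $\psi \in T_\psi C(V)$ simultaneously absorbs the radial direction into the tangent-space description and forces $\lambda=\psi^T H\psi$ is the crux, and you handle it cleanly.

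One small point worth making explicit: your tangent-space description requires $\lambda\neq 0$, since at $\lambda=0$ the point is the apex of $C(X)$, which is singular. This is implicit in the convention that critical points are taken at smooth points, but it would not hurt to say so alongside your remark about restricting to $V_{\mathrm{reg}}$. Otherwise the argument is complete; in fact it subsumes the paper's Proposition~\ref{prop:BW-crit} (the statement that $Z-H\in T_Z(X)^\perp$ characterizes BW-criticality) as the first step of your computation.
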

\begin{proposition} \label{prop:BW-crit}
A point $f_Z$ in $X$ is a critical point of \eqref{eqn:optBW} if and only if   
$Z-H \in T_Z(X)^{\perp}$ where $\perp$ refers to the $\mathrm{BW}$ inner product. 
\end{proposition}
\begin{proof}
Each point $f_Z$ on $X$ is 
of the form $\psi^TZ\psi$ for a symmetric matrix $Z$ of rank one of the form \eqref{eq:quadratic}. Then the gradient of $q_{\mathrm{BW}}(f_H - f_Z) = \mathrm{trace}((H-Z)^2)$ with respect to $Z$ is equal to $-2(H-Z)$. 
Now the result follows. 
\end{proof}
Let $I_X = \langle g_1, \ldots, g_s\rangle$ be the prime ideal in $\mathbb{R}[Z] = \mathbb{R}[z_{ij} \, : \, 1\leq i \leq j \leq n]$ defining $X$. Suppose 
the codimension of $I_X$ is $c$. We let $\mathrm{Jac}(I_X)$ to be the Jacobian of the ideal $I_X$. Then the critical ideal 
is 
\begin{equation} 
\label{eqn:critical-ideal}
\left( I_X + \bigg \langle (c+1) \mbox{-minors of } \left( \begin{array}{c}
\nabla_Z(\mathrm{trace}((Z-H)^2)) \\ \mathrm{Jac}(I_X) \end{array} \right)\bigg  \rangle \right) \, : \, I_{X_{\rm sing}}^\infty.     
\end{equation}

\begin{definition}
The BW correspondence $\mathcal B_V$ is 
\begin{equation}\label{eqn:BW-cor}
    \mathcal B_V = \overline{\{(Z, H) \in C(X)_{\rm reg} \times \mathbb  \Sym^2(\mathbb C^n): 
    Z \textrm{ is a critical point of (\ref{eqn:optBW}})
    \}}.
\end{equation}
where $C(X)$ is the cone over $X = \nu_2(V) \subset \mathbb{P}(\Sym^2(\mathbb{C}^n))$
for the irreducible variety~$V \subset \mathbb{P}^{n-1}$.
\end{definition}
\begin{theorem}
The BW correspondence $\mathcal B_V$ where $X = \nu_2(V)$ is irreducible of codimension $c$ is an irreducible variety of dimension $\binom{n+1}{2}$ in
$\Sym^2(\C^n) \times \Sym^2(\C^n)$. The first projection
$\pi_1 \, : \, \mathcal B_V \rightarrow C(X)$ is an affine vector bundle over $
C(X)_{\mathrm{reg}} = C(X) \setminus C(X)_{\rm sing}$ of rank $c$. Over a general symmetric matrix $H \in \Sym^2(\C^n)$, the second projection $\pi_2 \, : \, \mathcal B_V \rightarrow \Sym^2(\C^n)$ has finite fibers $\pi_2^{-1}(H)$ with cardinality equal to two times the RR degree of $V$. 
\end{theorem}
\begin{proof}
 For a fixed $Z \in C(X)_{\mathrm{reg}}$, the fiber $\pi_1^{-1}(Z)$ is equal to $\{Z\} \times \{Z + T_Z(X)^{\perp}\}$  by 
Proposition \ref{prop:BW-crit}. 
Here, the second factor is an affine
 space of dimension $c$. 
This shows that $\mathcal B_V$ is an affine vector bundle of 
dimension $\binom{n+1}{2}$. Since $\pi_2$ is a dominant map, over a general point $H$, the cardinality of
$\pi_2^{-1}(H)$ is finite and is equal to the degree of the map $\pi_2$. Now, Proposition~\ref{prop:SSW25} implies that this is equal to twice the RR degree of $V$. 
\end{proof} 
\begin{corollary} For general $H \in \Sym^2(\C^n)$, the critical ideal \eqref{eqn:critical-ideal} is zero-dimensional, i.e., the variety defined by the critical ideal is finite in $\Sym^2(\C^n)$.
\end{corollary}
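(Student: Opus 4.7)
The plan is to deduce the Corollary directly from the Theorem that precedes it, by identifying the variety cut out by the critical ideal \eqref{eqn:critical-ideal} with a generic fiber of the second projection $\pi_2: \mathcal B_V \to \Sym^2(\C^n)$. The Theorem already gives us exactly what we need about these fibers, so the work is in translating between the ideal-theoretic description and the fiber description.

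First I would fix a general $H \in \Sym^2(\C^n)$ and consider the variety $Y(H) \subseteq \Sym^2(\C^n)$ defined (set-theoretically) by the critical ideal \eqref{eqn:critical-ideal}. By Proposition~\ref{prop:BW-crit} together with the saturation by $I_{X_{\rm sing}}$, the points of $Y(H)$ are exactly the regular points $Z \in C(X)_{\rm reg}$ satisfying $Z - H \in T_Z(X)^{\perp}$; equivalently, $Z$ is a critical point of \eqref{eqn:optBW} for this $H$. Comparing this with the definition \eqref{eqn:BW-cor} of the BW correspondence, one sees that $Y(H)$ is precisely the (closure of the) fiber $\pi_2^{-1}(H)$, viewed as a subvariety of $\Sym^2(\C^n)$ under the second projection.

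Next I would invoke the Theorem, which asserts that for general $H$ the fiber $\pi_2^{-1}(H)$ is finite, with cardinality equal to twice the RR degree of $V$. Since $Y(H) = \pi_2^{-1}(H)$ as sets, $Y(H)$ is a finite set of points in $\Sym^2(\C^n)$. By definition, this says that the critical ideal \eqref{eqn:critical-ideal} is zero-dimensional (its vanishing locus has Krull dimension zero), which is exactly the statement of the Corollary.

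I do not expect any real obstacle here; the content lies entirely in the preceding Theorem, and the only subtlety is the routine check that the saturations appearing in \eqref{eqn:critical-ideal} do not alter the set-theoretic picture on the regular locus $C(X)_{\rm reg}$, so that the zero set of \eqref{eqn:critical-ideal} is indeed the fiber described by the BW correspondence. One might additionally wish to remark, for the record, that the cardinality of $Y(H)$ is $2 \cdot \mathrm{RR\,degree}(V)$, which follows from the same Theorem; but the Corollary as stated only needs finiteness.
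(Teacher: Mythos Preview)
Your proposal is correct and follows exactly the paper's approach: the paper's proof is the single sentence ``For a fixed $H$, the zero set of the critical ideal \eqref{eqn:critical-ideal} is $\pi_2^{-1}(H)$,'' and then the preceding Theorem is applied. Your write-up simply unpacks this identification in more detail than the paper does.
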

\begin{proof}
For a fixed $H$, the zero set of the critical ideal \eqref{eqn:critical-ideal} is $\pi_2^{-1}(H)$.
\end{proof}

\begin{corollary} \label{cor:rational-BW}
If $V$ is rational so is the BW correspondence $\mathcal B_V$. 
\end{corollary}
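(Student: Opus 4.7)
The plan is to combine the rationality of the cone $C(X)$ with the affine vector bundle structure of $\mathcal{B}_V$ established in the preceding theorem.

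First I would observe that $C(X)$ is rational. Since the Veronese map $\nu_2$ is an isomorphism onto its image, $X = \nu_2(V)$ is rational whenever $V$ is, and then $C(X)$ is birational to $X \times \mathbb{C}$, hence rational as well. Explicitly, a birational parametrization $\phi \colon U \dashrightarrow V$ with $U$ rational lifts to a birational map $U \times \mathbb{C} \dashrightarrow C(X)$ via $(u, \lambda) \mapsto \lambda\, \nu_2(\phi(u))$.

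Next I would use the preceding theorem: $\pi_1 \colon \mathcal{B}_V \to C(X)$ is an affine vector bundle of rank $c$ whose fiber over $Z \in C(X)_{\mathrm{reg}}$ is $Z + T_Z(X)^\perp$. Over a dense open subset of $C(X)_{\mathrm{reg}}$ the Jacobian $\mathrm{Jac}(I_X)$ has constant rank $c$, so after restricting to the principal open where a fixed $c \times c$ minor is nonvanishing, Cramer's rule produces a rationally varying basis $v_1(Z), \ldots, v_c(Z)$ of $T_Z(X)^\perp$. Composing with the parametrization of $C(X)$ yields a dominant rational map
\begin{equation*}
U \times \mathbb{C}^{1+c} \dashrightarrow \mathcal{B}_V, \qquad (u, \lambda, t_1, \ldots, t_c) \longmapsto \Bigl(\lambda\,\nu_2(\phi(u)),\; \lambda\,\nu_2(\phi(u)) + \textstyle\sum_{i=1}^c t_i\, v_i\Bigr),
\end{equation*}
whose source has matching dimension $\binom{n+1}{2}$ and whose image is dense in $\mathcal{B}_V$ by irreducibility. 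Therefore $\mathcal{B}_V$ is rational.

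The only technical point is producing the rationally varying frame $v_1, \ldots, v_c$ for the normal space, which is routine via Cramer's rule applied to $\mathrm{Jac}(I_X)$; no input is needed beyond the affine vector bundle structure already established.
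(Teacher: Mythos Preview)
Your argument is correct and follows essentially the same strategy as the paper: rationally parametrize $C(X)$, then trivialize the rank-$c$ affine normal bundle $\pi_1$ by a rationally varying frame for $T_Z(X)^\perp$ obtained via Cramer's rule. The one cosmetic difference is that the paper extracts this frame as the left kernel of the Jacobian of the parametrization $\psi\colon \mathbb{C}^m \dashrightarrow C(X)$, never invoking implicit equations, whereas you read it off from the rows of $\mathrm{Jac}(I_X)$; both describe the same normal space, so the two proofs are interchangeable.
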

\begin{proof}
This follows from \cite[Corollary 4.2]{DHGSR}, but we include it for completeness. 
If $V$ is rational, then $X = \nu_2(V)$, and hence $C(X)$ also
has a rational parametrization $\psi \, : \, \C^m \dashrightarrow \Sym^2(\C^n)$ where $m = \dim(C(X)) = \binom{n+1}{2} - c$. The Jacobian $\mathrm{Jac}(\psi)$ of this map
is an $\binom{n+1}{2} \times m$ matrix of rational functions in the coordinates $t_1, \ldots, t_m$ of $\C^m$. The columns of the Jacobian span the tangent space of $C(X)$ at the point $\psi(t)$. The left kernel
of $\mathrm{Jac}(\psi)$ is a linear space of dimension $c$. By using Cramer's rule one can get a basis 
$\{\beta_1(t), \ldots, \beta_c(t) \}$ for this linear space where $\beta_j(t)$ are also rational functions. With this, the map
$$ \C^m \times \C^c \dashrightarrow \mathcal B_V, \quad \quad
(t,s) \mapsto \left(\psi(t), \, \psi(t) + \sum_{j=1}^c s_j \beta_j(t)\right)$$
is a rational parametrization of $\mathcal B_V$.
\end{proof}

\section{Geometry of Tensor Train Varieties} \label{sec: TTvarieties}
In this section, we introduce \emph{tensor train varieties} (TT varieties) -- projective varieties of tensors in a tensor train format. These varieties  can be described by the combinatorial structure of
a graph following the tensor network viewpoint as, for example,  in \cite{  Landsberg2012, Seynnaeve2025,Szalay2015}.

Let $\Gamma_n$ be an undirected path graph on vertices $\{0,1,\dots,n\}$, with edges $\{i-1,i\}$ for $i=1,\dots,n$. We assign positive integers ${\mathbf k} = (k_0, k_1, \ldots, k_n)$ to the vertices, known as \emph{physical dimensions}, and ${\mathbf r} = (r_1, r_2, \ldots, r_n)$ to the edges, called \emph{bond dimensions}; by convention, we set $r_0=r_{n+1}=1$. We identify tensors in the ambient projective space
$\mathbb{P}\big(\bigotimes_{i=0}^n \mathbb{C}^{k_i}\big)$ with points having homogeneous coordinates $\psi_{j_0,\dots,j_n}$, where $j_i\in[k_i]:=\{1,\dots,k_i\}$.
For each index $i\in\{0,1,\ldots,n\}$ and each $j_i\in[k_i]$, let $A_i^{(j_i)}$ be a general $r_i\times r_{i+1}$ matrix of parameters.
\begin{definition}\label{def: TTvariety}
  The \emph{tensor train variety} $V_{\mathbf{k}, \mathbf{r}}$ is the projective variety in $\mathbb{P}\left( \bigotimes_{i=0}^n \mathbb{C}^{k_i} \right)$ defined as the Zariski closure of the image of the following rational map:
\begin{equation}\label{eq: infinite_param}
    \begin{aligned}
        \Psi_{\mathbf{k},\mathbf{r}}:
\prod_{i=0}^n\Big(\mathbb{P}\big(\mathbb{C}^{r_i\times r_{i+1}}\big)\Big)^{k_i}
& \dashrightarrow
\mathbb{P}\!\left(\bigotimes_{i=0}^n \mathbb{C}^{k_i}\right), \\
\big(\,A_i^{(j_i)}\,\big)_{0\le i\le n,\;1\le j_i\le k_i}
& \longmapsto
\big(\psi_{j_0,j_1,\dots,j_n}\big)_{(j_0,j_1,\dots,j_n)\in\prod_{i=0}^n[k_i]}\, ,
    \end{aligned}
\end{equation}
where $\psi_{j_0,j_1,\dots,j_n}=A_0^{(j_0)}A_1^{(j_1)}\cdots A_n^{(j_n)}$.  In symbols, we have  $V_{\mathbf{k},\mathbf{r}} = \overline{\operatorname{im}(\Psi_{\mathbf{k},\mathbf{r}})}$.
\end{definition}
It is often convenient to characterize $V_{\mathbf{k},\mathbf{r}}$ by rank constraints on certain matrix flattenings of a tensor. For $i \in \{0,1,\dots,n-1\}$, let $\psi^i$ denote the flattening of $\psi$ according to the partition
$\{0,1,\dots,i\}\sqcup\{i+1,\dots,n\}$, viewed as a $(k_0\cdots k_i)\times (k_{i+1}\cdots k_n)$ matrix.
 \begin{lemma}\label{lem: another definition of TT}
 The tensor train variety $V_{\mathbf{k}, \mathbf{r}}$ is equal to
 \begin{equation}\label{eq: another definition of TT}
   V_{\mathbf{k}, \mathbf{r}} = \left \{ \psi \in \mathbb{P}\left( \bigotimes_{i=0}^n \mathbb{C}^{k_i} \right) \colon \mathrm{rank}(\psi^i) \leq r_{i+1} \quad \text{for } i=0, \ldots, n-1 \right\}.
 \end{equation}
In particular, set-theoretically $V_{\mathbf{k}, \mathbf{r}}$ is cut by all $(r_{i+1} + 1)\textrm{-minors of flattenings $\psi^i$}$.  
 \end{lemma}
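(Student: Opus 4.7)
The plan is to prove both inclusions in equation~(\ref{eq: another definition of TT}). Let $W$ denote the right-hand side, which is Zariski closed as an intersection of determinantal varieties.

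For the forward inclusion $V_{\mathbf{k},\mathbf{r}}\subseteq W$, it suffices to show $\operatorname{im}(\Psi_{\mathbf{k},\mathbf{r}})\subseteq W$. For $\psi$ in the image, splitting the defining matrix product at position $i$ gives
\[
\psi_{j_0,\dots,j_n}\;=\;\sum_{\alpha=1}^{r_{i+1}}
\bigl(A_0^{(j_0)}\cdots A_i^{(j_i)}\bigr)_{\alpha}\,
\bigl(A_{i+1}^{(j_{i+1})}\cdots A_n^{(j_n)}\bigr)_{\alpha},
\]
which realizes $\psi^i$ as a product $L_i R_i$ with inner dimension $r_{i+1}$, so $\mathrm{rank}(\psi^i)\le r_{i+1}$.

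For the reverse inclusion $W\subseteq V_{\mathbf{k},\mathbf{r}}$, I would apply a TT--SVD construction by induction on $n$. The base case $n=0$ is vacuous. For the inductive step, let $\psi\in W$ and set $s=\mathrm{rank}(\psi^0)\le r_1$. Factor $\psi^0=U_0 M_0$ with $U_0\in\mathbb{C}^{k_0\times s}$ of full column rank, and reshape $M_0$ into a tensor $\varphi\in\mathbb{C}^{s k_1}\otimes \mathbb{C}^{k_2}\otimes \cdots\otimes \mathbb{C}^{k_n}$ by merging the first two modes. A direct calculation yields the key identity
\[
\psi^{i+1}\;=\;\bigl(U_0\otimes I_{k_1\cdots k_{i+1}}\bigr)\,\varphi^i, \qquad i=0,\dots,n-2,
\]
where $\varphi^i$ denotes the $i$-th flattening of $\varphi$ viewed as an $n$-factor tensor. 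Since $U_0\otimes I$ has full column rank, this forces $\mathrm{rank}(\varphi^i)=\mathrm{rank}(\psi^{i+1})\le r_{i+2}$. Thus $\varphi$ lies in the determinantal locus associated to physical dimensions $(sk_1,k_2,\dots,k_n)$ and bond dimensions $(r_2,\dots,r_n)$, and by the inductive hypothesis admits a TT factorization $\varphi_{(\alpha_1,j_1),j_2,\dots,j_n}=B_0^{(\alpha_1,j_1)}A_2^{(j_2)}\cdots A_n^{(j_n)}$ with $B_0^{(\alpha_1,j_1)}\in\mathbb{C}^{1\times r_2}$. Taking $A_0^{(j_0)}$ to be the $j_0$-th row of $U_0$ padded with zeros to length $r_1$, and $A_1^{(j_1)}\in\mathbb{C}^{r_1\times r_2}$ with its first $s$ rows given by $B_0^{(\cdot,j_1)}$ and remaining rows zero, produces the desired factorization $\psi_{j_0,\dots,j_n}=A_0^{(j_0)}A_1^{(j_1)}\cdots A_n^{(j_n)}$, placing $\psi\in\operatorname{im}(\Psi_{\mathbf{k},\mathbf{r}})\subseteq V_{\mathbf{k},\mathbf{r}}$.

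The main obstacle is verifying the key identity $\psi^{i+1}=(U_0\otimes I)\varphi^i$ and the resulting rank equality; this is a notation-heavy but routine calculation. A secondary subtlety is the rank-deficient case $s<r_1$, which is handled by the zero-padding described above so that the returned matrices have the prescribed TT sizes.
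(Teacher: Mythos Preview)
Your proof is correct and follows essentially the same approach as the paper: the forward inclusion via splitting the matrix product is identical, and the reverse inclusion is the standard TT--SVD/sequential rank factorization, which the paper presents iteratively while you frame it as a formal induction on $n$. Your version is slightly more explicit than the paper's in two places---you justify the rank inheritance $\mathrm{rank}(\varphi^i)\le r_{i+2}$ via the identity $\psi^{i+1}=(U_0\otimes I)\varphi^i$ with $U_0$ of full column rank, and you spell out the zero-padding for the rank-deficient case $s<r_1$---whereas the paper simply asserts these steps.
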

 \begin{proof}
     Let $\psi$ be in the image of $\Psi_{\mathbf{k},\mathbf{r}}$. Fix $i\in\{0,\dots,n-1\}$ and consider two matrices
\[
L_i:=A_0^{(j_0)}\cdots A_i^{(j_i)}\in\mathbb{C}^{1\times r_{i+1}},
\qquad
R_i:=A_{i+1}^{(j_{i+1})}\cdots A_n^{(j_n)}\in\mathbb{C}^{r_{i+1}\times 1}.
\]
Then the flattening $\psi^i$ has entries $\psi^i[j_0,\dots,j_i\, ;\, j_{i+1},\dots,j_n]
= L_i\,R_i$.
Thus the flattening $\psi^i$ factors through a vector space of dimension $r_{i+1}$, and therefore $\operatorname{rank}(\psi^i)\le r_{i+1}$.

Conversely, let $\operatorname{rank}(\psi^i)\le r_{i+1}$ for all $i=0,\dots,n-1$. We construct a TT decomposition inductively. 
Start with  $\psi^0\in\mathbb{C}^{k_0\times (k_1\cdots k_n)}$ and 
let $\rho_1 = \operatorname{rank}(\psi^0) \leq r_1$. Choose a rank factorization $\psi^0 = L_0 R_0$ with $L_0 \in \mathbb{C}^{k_0 \times \rho_1}$ of full column rank and $R_0 \in \mathbb{C}^{\rho_1 \times (k_1 \cdots k_n)}$ of full row rank. Define a $1 \times \rho_1$ matrix $A_0^{(j_0)}$ to be the $j_0$-th row of $L_0$, and reshape $R_0$ as a tensor 
$$
\psi^{(1)} \in \mathbb{C}^{\rho_1} \otimes \mathbb{C}^{k_1} \otimes \cdots \otimes \mathbb{C}^{k_n}.$$
Now consider the flattening of $\psi^{(1)}$ with respect to the partition $\{\rho_1, k_1\} \sqcup \{k_2, \dots, k_n\}$, which is a $(\rho_1 k_1) \times (k_2 \cdots k_n)$ matrix. Since $L_0$ has full  rank, so does the matrix $L_0 \otimes I_{k_1}$, and since
$$
\psi^1  = (L_0 \otimes I_{k_1}) \cdot \operatorname{flat}(\psi^{(1)}),
$$
we have $\rho_2 \coloneqq \operatorname{rank}(\operatorname{flat}(\psi^{(1)})) \leq r_2$.
 Again choose a rank factorization ${\operatorname{flat}(\psi^{(1)}) = L_1 R_1}$ with $L_1 \in \mathbb{C}^{(\rho_1 k_1) \times \rho_2}$ of full column rank and read the $k_1$ slices of $L_1$ as matrices $A_1^{(j_1)} \in \mathbb{C}^{\rho_1 \times \rho_2}$. Then, reshape $R_1$ to be the tensor $\psi^{(2)} \in \mathbb{C}^{\rho_2} \otimes \mathbb{C}^{k_2} \otimes \cdots \otimes \mathbb{C}^{k_n}$.
We proceed
inductively and at step $i=n$ (with $\rho_{n+1}=1$), the remainder $\psi^{(n+1)}$ is scalar, so we obtain
$$
\psi_{j_0, \dots, j_n} = A_0^{(j_0)} A_1^{(j_1)} \cdots A_n^{(j_n)} \quad 
\text{with } \; A_i^{(j_i)} \in \mathbb{C}^{\rho_i \times \rho_{i+1}} \; \text{ and } \; \forall \, i: \rho_i \leq r_i.$$ 
Extending each core with zero rows and columns to size $r_i \times r_{i+1}$ shows that $\psi \in \operatorname{im}(\Psi_{\mathbf{k},\mathbf{r}})$.
Finally, rank constraints are given by vanishing minors, so righthand side of \eqref{eq: another definition of  TT} is closed.
 \end{proof}
 \begin{corollary}
Let ${\bf k}=(k_0,\ldots,k_n)$ and ${\bf r}=(r_1,\ldots,r_n)$, with the convention
$r_0=r_{n+1}=1$. There exists a tensor $\psi\in V_{{\bf k},{\bf r}}$ whose flattening
ranks are exactly
$
\operatorname{rank}(\psi_{i-1})=r_i$ for  $i\in [n]
$
if and only if
\begin{equation}\label{eq: kr constraints}
r_i \leq r_{i-1}k_{i-1}
\quad \text{and} \quad
r_i \leq r_{i+1}k_i
\quad \text{for all } i=1,\ldots,n .    
\end{equation}
\end{corollary}
\begin{proof}
The necessity follows from the elementary rank inequalities
$$
\operatorname{rank}(\psi_{i-1})
   \leq k_{i-1}\operatorname{rank}(\psi_{i-2})
\quad\text{and}\quad
\operatorname{rank}(\psi_{i-1})
   \leq k_i\operatorname{rank}(\psi_i),
$$
with the conventions $r_0=r_{n+1}=1$. Thus, if
$\operatorname{rank}(\psi_{i-1})=r_i$ for all $i\in[n]$, then~\eqref{eq: kr constraints} holds.

Conversely, assume that inequalities in~\eqref{eq: kr constraints} hold. For a generic choice of parameters, the flattening
$\psi_{i-1}$ has rank
$
\min\{r_i,\, r_{i-1}k_{i-1},\, r_{i+1}k_i\}
$.
By the assumed inequalities this minimum is equal to $r_i$. Hence there
exists a tensor $\psi\in V_{{\bf k},{\bf r}}$ with
$\operatorname{rank}(\psi_{i-1})=r_i$ for all $i=1,\ldots,n$.
\end{proof}
\begin{remark}\label{rem: min bond dims}
If either of the inequalities in~\eqref{eq: kr constraints} for $r_i$ is an equality or fails to hold, then the corresponding rank condition in~\eqref{eq: another definition of TT} is automatically satisfied, as in the example below. In our computations, we
therefore replace $r_i$ by
$
\min\{r_i,\, r_{i-1}k_{i-1},\, r_{i+1}k_i\}
$
whenever necessary.
\end{remark}
\begin{example}\label{ex:2222}
Take $\mathbf{k}=(2,2,2,2)$ and $\mathbf{r}=(1,2,1)$.  The parametrization \eqref{eq: infinite_param} yields
\[
\psi_{ijkl}=A_0^{(i)}A_1^{(j)}A_2^{(k)}A_3^{(\ell)},
\qquad i,j,k,\ell\in\{1,2\},
\]
  where $A_0^{(i)}$ and $A_3^{(\ell)}$ are scalars, $A_1^{(j)}\in\mathbb{C}^{1\times 2}$, and $A_2^{(k)}\in\mathbb{C}^{2\times 1}$. Writing
\[
A_0^{(i)}=(a_0^{i}),\quad
A_1^{(j)}= \begin{pmatrix} a_{11}^{j} & a_{12}^{j}\end{pmatrix},\quad
A_2^{(k)}=\begin{pmatrix} a_{21}^{k} \\ a_{22}^{k}\end{pmatrix},\quad
A_3^{(\ell)}=(a_3^{\ell}),
\]
one checks that $V_{\mathbf{k},\mathbf{r}}$ is isomorphic to the Segre product $\mathbb{P}^3\times (\mathbb{P}^1)^2$ whose vanishing ideal is minimally generated by the $2$-minors of the flattenings $\psi^0$ and $\psi^2$. Minors of $\psi^1$ don't appear since $r_2 = 2 = 1\cdot2 = r_{1}k_{1}$.
The corresponding tensor network is shown in Figure~\ref{fig: expathgraph}.
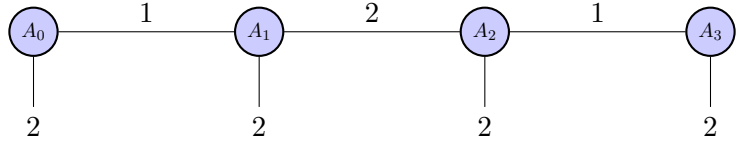
\begin{figure}
    \centering
    \begin{tikzpicture}[
    roundnode/.style={circle, draw=black, fill=blue!20, thick, minimum size=5mm, scale = 0.7},
]
\node[roundnode] (A1) at (0,0) {$A_0$};
\node[roundnode] (A2) at (3,0) {$A_1$};
\node[roundnode] (A3) at (6,0) {$A_2$};
\node[roundnode] (A4) at (9,0) {$A_3$};
\draw[-] (A1) -- (A2) node[midway, above] {$1$};
\draw[-] (A2) -- (A3) node[midway, above] {$2$};
\draw[-] (A3) -- (A4) node[midway, above] {$1$};
\draw[-] (A1) -- ++(0,-1) node[below] {$2$};
\draw[-] (A2) -- ++(0,-1) node[below] {$2$};
\draw[-] (A3) -- ++(0,-1) node[below] {$2$};
\draw[-] (A4) -- ++(0,-1) node[below] {$2$};
\end{tikzpicture}
    \caption{A tensor network diagram for binary tensors of order 4 of TT rank ${\bf r} = (1,2,1)$.}
    \label{fig: expathgraph}
\end{figure}
\end{example}
\begin{lemma}
\label{lem: bond_one_segre}
Suppose that $r_i=1$ for some $i\in[n]$. Let
$
{\bf k}'=(k_0,\ldots,k_{i-1}), {\bf r}'=(r_1,\ldots,r_{i-1}),
$
and
$
{\bf k}''=(k_i,\ldots,k_n), {\bf r}''=(r_{i+1},\ldots,r_n).
$
Then $V_{{\bf k},{\bf r}}$ is the Segre embedding of
$
V_{{\bf k}',{\bf r}'}\times V_{{\bf k}'',{\bf r}''}.
$
\end{lemma}
\begin{proof}
Let $\Psi_{{\bf k},{\bf r}}$, $\Psi_{{\bf k}',{\bf r}'}$, and $\Psi_{{\bf k}'',{\bf r}''}$ denote the
corresponding TT parametrizations. Since $r_i=1$, for every multi-index
$(j_0,\ldots,j_n)$ we have
$
\psi_{j_0,\ldots,j_n}
=
\bigl( A^{(j_0)}_0\cdots A^{(j_{i-1})}_{i-1} \bigr) \cdot 
\bigl( A^{(j_i)}_i\cdots A^{(j_n)}_n\bigr)
$, where both factors are $1\times 1$ matrices. Hence we may write
$
\psi_{j_0,\ldots,j_n}
=
\phi_{j_0,\ldots,j_{i-1}}\,
\xi_{j_i,\ldots,j_n},
$
where
$
\phi_{j_0,\ldots,j_{i-1}}
=
A^{(j_0)}_0\cdots A^{(j_{i-1})}_{i-1}
$
is a point in $V_{{\bf k}',{\bf r}'}$, and
$
\xi_{j_i,\ldots,j_n}
=
A^{(j_i)}_i\cdots A^{(j_n)}_n
$
is a point in $V_{{\bf k}'',{\bf r}''}$. Thus every tensor in the image of
$\Psi_{{\bf k},{\bf r}}$ lies in the Segre embedding of 
$
V_{{\bf k}',{\bf r}'}\times V_{{\bf k}'',{\bf r}''}.
$

Conversely, a point in the Segre product of $V_{{\bf k}',{\bf r}'}$ and 
$V_{{\bf k}'',{\bf r}''}$ is obtained by concatenating the corresponding TT cores along
the bond dimension $r_i=1$. Therefore the two parametrized images agree,
and taking Zariski closures proves the claim.
\end{proof}
From now on, we use the shorthand $(1)_n := (\underbrace{1,\dots,1}_{n})$ for sequences of repeated indices.
\begin{corollary} \label{cor:Segre-as-tensor-train}
For $\mathbf{k} = (k_0, k_1,\ldots, k_n)$ and $\mathbf{r} = (1)_n$ we have
$V_{\mathbf{k}, \mathbf{r}}=\mathbb{P}^{k_0-1} \times \cdots \times \mathbb{P}^{k_n-1}$, embedded by the Segre map. 
\end{corollary}
\begin{proof}
The matrices $A_i^{(j_i)}$  in the parametrization are $1 \times 1$ matrices: 
$A_i^{(j_i)} = (a_i^{j_i})$. Therefore,
$\psi_{j_0, j_1, \ldots, j_n} = a_0^{j_0} a_1^{j_1} \cdots a_n^{j_n}$. This is precisely the Segre embedding of 
$\mathbb{P}^{k_0-1} \times \cdots \times \mathbb{P}^{k_n-1}$.
\end{proof}
We next isolate a large, explicit family of tensor train varieties that are again Segre pro\-ducts. We say that indices $i$ with $r_i>1$ are \emph{separated} if no two such indices are adjacent; likewise, we call a \emph{separated block} a consecutive sequence $(i_1,\dots,i_t)$ with ${r_{i_1},\dots,r_{i_t}>1}$ such that no other $r_j>1$ is adjacent to the block.
\begin{theorem}\label{thm:TT-as-Segre}
 Let $\mathbf{k}=(k_0,\dots,k_n)$ and $\mathbf{r}=(r_1,\dots,r_n)$, with $r_0=r_{n+1}=1$. Suppose $\mathbf{r}$ has $\ell$ separated blocks of consecutive indices
$I_p=(a_p,\dots,b_p)$  such that $r_i>1$ iff $i\in \bigcup_p I_p$. 
Assume furthermore that for all $i \in \bigcup_p I_p$, either $r_i \geq r_{i-1}k_{i-1}$ or $r_{i-1} \geq r_i k_{i-1}$.
Then
 \[
V_{\mathbf{k},\mathbf{r}} \simeq
\mathbb{P}^{K_1-1}\times \cdots \times \PP^{K_\ell-1} \times \mathbb{P}^{k_{j_1}-1} \times \cdots \times \mathbb{P}^{k_{j_s} -1},
\]
where $K_p=\prod_{v=a_p-1}^{b_p} k_v$ for $p = 1,\dots,\ell$ and $j_{q}$ are such that $r_{j_q+1} = 1$ is not adjacent to two separated blocks for $q=1,\dots,s$. Moreover,
\[
\dim V_{\mathbf{k},\mathbf{r}} = \sum_{p=1}^\ell (K_p - 1) 
+ \sum_{q=1}^s (k_{j_q} - 1), \quad \deg V_{\mathbf{k},\mathbf{r}} = 
\frac{\left( \sum_{p=1}^\ell (K_p - 1) + \sum_{q=1}^s (k_{j_q} - 1) \right)!}
{\prod_{p=1}^l (K_p - 1)! \; \prod_{q=1}^s (k_{j_q} - 1)!}.
\]
\end{theorem}
\begin{proof}
For a block $I_p$, the corresponding TT cores have shape $(r_i \times k_i \times r_{i+1})$ for $i \in I_p$, and $r_{a_p-1}=r_{b_p+1}=1$. 
These ranks exceed~1 and either $r_i \geq r_{i-1}k_{i-1}$ or 
$r_{i-1} \geq r_i k_{i-1}$, so the block parameters 
span all coordinates in $\mathbb{P}^{K_p - 1}$, giving a 
rational reparametrization~onto~$\mathbb{P}^{K_p - 1}$.

By repeated application of Lemma~\ref{lem: bond_one_segre}, the global parametrization factors as the Segre product of the $\mathbb{P}^{K_p - 1}$ for $p \in \{1,\dots,\ell\}$ with the remaining $\mathbb{P}^1$ factors.  The dimension is the sum of the factor dimensions, and the degree of  $\mathbb{P}^{d_1}\times\cdots\times\mathbb{P}^{d_s}$ is
the multinomial coefficient $$\binom{d_1+\cdots+d_s}{d_1\,\cdots\,d_s},$$ yielding the claimed formulas.
\end{proof}
We are mostly interested in binary tensors ($k_i=2$) due to their prevalence in applications.
\begin{corollary} \label{cor: binaryTT-as-Segre}
Let $\mathbf{k}=(2)_{n+1}$ and $\mathbf{r}=(r_1,\dots,r_n)$.
Suppose there are
$\ell$ separated indices $i$ with $r_i>1$,
$t$ separated pairs $(i,i+1)$ with $r_i,r_{i+1}>1$,
and all remaining $r_j$ equal $1$.
Then
\[
V_{\mathbf{k},\mathbf{r}} \simeq
(\mathbb{P}^3)^\ell \times (\mathbb{P}^7)^t \times (\mathbb{P}^1)^{\,n-\ell-2t-s},
\]
where $s$ is the number of $r_j=1$ adjacent to two separated blocks.
In particular, the dimension of $V_{\mathbf{k}, \mathbf{r}}$ is $n+2\ell + 5t -s$ and its degree is the multinomial coefficient 
$$\frac{(n+2\ell+5t-s)!}{(3!)^\ell (7!)^t}.$$
\end{corollary}
\noindent
The dimensions of the varieties above also follow from the general formula in Corollary~\ref{cor:TT-dim}.

We now turn to equations. Lemma~\ref{lem: another definition of TT} shows that $V_{\mathbf{k},\mathbf{r}}$ is set-theoretically cut out by determinantal equations. The following conjecture, first suggested to us by Bernd Sturmfels in personal communication, asserts that these suffice scheme-theoretically as well.
\begin{conjecture}\label{conj: generating_ideal}
   For fixed $\mathbf{k}$ and $\mathbf{r}$, let $\psi$ be a $k_0 \times \cdots \times k_n$ tensor of indeterminates and let $\psi^i$ be the flattenings as above. Then the homogeneous prime ideal of $V_{\mathbf{k}, \mathbf{r}}$ in $\mathbb{C}[\psi]$ is
\[
\mathcal I(V_{\mathbf{k}, \mathbf{r}})
= \sum_{i=0}^{n-1} \Big\langle (r_{i+1} + 1)\textrm{-minors of }\psi^i \Big\rangle.
\]
Moreover, the union of $(r_{i+1} + 1)$-minors of flattenings $\psi^i$ for $i=0,\dots,n-1$ forms a Gröbner basis for the ideal $\mathcal I(V_{\mathbf{k}, \mathbf{r}})$.
\end{conjecture}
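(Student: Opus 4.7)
The plan is to prove both claims of the conjecture simultaneously using the standard Gr\"obner basis / Hilbert series comparison technique. Let $J := \sum_{i=0}^{n-1} I_{r_{i+1}+1}(\psi^i) \subseteq \mathbb{C}[\psi]$. By Lemma~\ref{lem: another definition of TT} we already know $V(J) = V_{\mathbf{k},\mathbf{r}}$ set-theoretically, so $J \subseteq \mathcal{I}(V_{\mathbf{k},\mathbf{r}})$, and both parts of the conjecture would follow at once from the existence of a term order $\prec$ on $\mathbb{C}[\psi]$ satisfying
\[
\mathrm{in}_\prec(J) \;=\; \mathrm{in}_\prec(\mathcal{I}(V_{\mathbf{k},\mathbf{r}})).
\]

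First I would look for a ``multi-diagonal'' term order that simultaneously selects main-diagonal monomials in every flattening $\psi^i$. A natural candidate is the graded lexicographic order induced by an ordering of multi-indices $(j_0,\dots,j_n)$ consistent with the path graph $\Gamma_n$, so that the leading term of each $s$-minor of every $\psi^i$ is the squarefree product of its diagonal entries. With such an order in hand, let $J'$ be the initial ideal of the generators proposed in the conjecture; then $J'$ is a squarefree monomial ideal whose Stanley--Reisner complex $\Delta$ has faces given by those sets of multi-indices that avoid any anti-diagonal of forbidden size in any single flattening.

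Next I would compute the Hilbert series of $\mathbb{C}[\psi]/J'$ from the combinatorics of $\Delta$, most plausibly via a shelling adapted to the path structure, and compare it with the Hilbert series of the homogeneous coordinate ring of $V_{\mathbf{k},\mathbf{r}}$, which can be extracted from the birational parametrization by products of Grassmannians developed in Section~\ref{sec:birational} together with the dimension formula of Corollary~\ref{cor:TT-dim}. Matching the two series, combined with the automatic containment $J' \subseteq \mathrm{in}_\prec(\mathcal{I}(V_{\mathbf{k},\mathbf{r}}))$, forces equality of initial ideals, yielding simultaneously $J = \mathcal{I}(V_{\mathbf{k},\mathbf{r}})$ and the Gr\"obner basis claim.

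The hard part will be controlling S-polynomials between minors living on different flattenings. For a single flattening the classical Knuth--Schensted straightening law guarantees that the minors form a Gr\"obner basis, but an $(r_{i+1}+1)$-minor of $\psi^i$ and an $(r_{j+1}+1)$-minor of $\psi^j$ with $i \neq j$ share overlapping tensor entries in nontrivial patterns, and a priori their S-pairs need not reduce to zero using the given generators. Equivalently, one must show that $\Delta$ is pure of the expected dimension and Cohen--Macaulay. As sanity checks, I would first reconfirm the cases covered by Theorem~\ref{thm:TT-as-Segre}, where $V_{\mathbf{k},\mathbf{r}}$ is a Segre product and the conjectured ideal reduces to a classical Segre ideal generated by $2$-minors, and then attempt an induction on $n$ by contracting a leaf of $\Gamma_n$, relating the ideal for $\mathbf{k}=(k_0,\dots,k_n)$ to one for $(k_0,\dots,k_{n-1})$ via a rank factorization of the last flattening $\psi^{n-1}$ as in the proof of Lemma~\ref{lem: another definition of TT}.
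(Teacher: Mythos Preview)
The statement you are attempting to prove is presented in the paper as an open \emph{conjecture}; the paper gives no proof, only partial evidence (Lemma~\ref{lem: 2minorsGB} for certain Segre cases, computational examples, and a further sub-conjecture). So there is no ``paper's proof'' to compare against, and your write-up should be read as a proposed strategy toward an open problem rather than a completed argument.

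As a strategy it is reasonable in outline, but there are two concrete gaps you should be aware of. First, the step ``extract the Hilbert series of the homogeneous coordinate ring of $V_{\mathbf{k},\mathbf{r}}$ from the birational parametrization by a product of Grassmannians together with Corollary~\ref{cor:TT-dim}'' does not work: a birational isomorphism does not preserve Hilbert series, and Corollary~\ref{cor:TT-dim} only gives the dimension, not the full graded structure. You would need an independent computation of the Hilbert series (e.g.\ via a degeneration or a resolution), and that is essentially as hard as the conjecture itself. Second, the paper's own examples warn that the choice of term order is delicate: the $2$-minors fail to be a reverse-lex Gr\"obner basis under arbitrary variable permutations, and the lexicographic analogue fails already for $\mathbf{k}=(2)_6$, $\mathbf{r}=(1,(2)_3,1)$. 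So the existence of a single ``multi-diagonal'' graded-lex order selecting diagonal leading terms in every flattening simultaneously cannot be assumed; establishing it is part of the problem, not a preliminary. Your honest acknowledgment that the cross-flattening S-pair reductions are ``the hard part'' is exactly right---that is where the conjecture currently lives.
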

We collect some positive evidence in the case of binary tensors, in particular, in the case of Segre varieties whose 
Gr\"{o}bner bases  have been studied widely. We start with the following lemma; a proof is available in \cite[Chapter~3]{VBthesis}.
\begin{lemma}\label{lem: 2minorsGB}
Let $\mathbf{k}=(2)_{n+1}$ and $\mathbf{r}=(r_1,\dots,r_n)$ with $\ell$ separated indices where $r_i>1$, $t$ separated pairs $(i, i+1)$ where $r_i,r_{i+1}>1$, and all remaining $r_j=1$. Let
\[
\{i_1,\dots,i_{n-\ell-2t}\}=\{1,\dots,n\}\setminus \big(\{i:\ r_i>1\}\cup\{i,i+1:\ r_i,r_{i+1}>1\}\big).
\]
Then the union
\[
G_{i_1} \, \cup\ \cdots\ \cup \, G_{i_{n-\ell-2t}},
\]
where $G_{j}$ denotes the set of $2$-minors of $\psi^{j-1}$, is a minimal reduced Gröbner basis of $\mathcal I(V_{\mathbf{k},\mathbf{r}})$ with respect to any lexicographic or reverse lexicographic term order that respects the row–column structure of the flattenings $\psi^j$ for $j=0,\dots,n-1$.
\end{lemma}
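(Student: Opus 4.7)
The plan is to reduce the claim to a classical Gröbner basis result on Segre embeddings via the identification supplied by Corollary~\ref{cor: binaryTT-as-Segre}. Under the hypotheses of the lemma, that corollary realizes the TT embedding of $V_{\mathbf{k},\mathbf{r}}$ as the Segre embedding of
\[
P \;=\; (\mathbb{P}^3)^\ell \times (\mathbb{P}^7)^t \times (\mathbb{P}^1)^{n-\ell-2t-s}
\]
into $\mathbb{P}^{2^{n+1}-1}$, in which each isolated $r_i=2$ contributes a $\mathbb{P}^3$ factor fusing the two adjacent physical coordinates, each isolated pair $r_i=r_{i+1}=2$ contributes a $\mathbb{P}^7$ factor fusing three consecutive physical coordinates, and every remaining site contributes a $\mathbb{P}^1$ factor. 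The indexing set $\{i_1,\dots,i_{n-\ell-2t}\}$ is exactly $\{j : r_j=1\}$ (the second set in its definition lies inside the first), and these are precisely the cuts of the path that do not split any factor of $P$.

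Next I would show that each $G_{i_k}$ is a block of Segre equations for $P$. Since the cut at $i_k$ does not split a factor, the flattening $\psi^{i_k-1}$ equals the bipartite Segre flattening whose rows are indexed by the factors of $P$ lying left of the cut and whose columns by the factors lying right. Hence $G_{i_k}$ is the set of $2$-minors of this bipartite Segre flattening, and $\bigcup_k G_{i_k}$ is the union of $2$-minors over the $m-1$ consecutive bipartite flattenings of the Segre embedding of $P$, where $m$ is the total number of factors of $P$. I would then invoke the classical theorem that for a Segre embedding of a product of projective spaces, the $2$-minors over the consecutive bipartite flattenings form a reduced Gröbner basis of the Segre ideal under any lex or revlex term order respecting the row--column structure of each flattening. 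The standard proofs proceed either by a direct S-pair reduction that exploits the straightening law for $2$-minors, or by recognizing the Segre ideal as a toric ideal on the Segre polytope and invoking the corresponding toric Gröbner basis machinery.

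The main obstacle is the Gröbner basis verification, and in particular handling S-pairs of $2$-minors drawn from different $G_{i_k}$'s, since a ``corner diagonal'' such as $\psi_{0\cdots 0}\,\psi_{1\cdots 1}$ can arise as a leading monomial from more than one cut. I would tackle this combinatorially, enumerating the S-pair types according to the relative positions $i_k<i_{k'}$ of the two cuts, grouping the involved variables into ``panels'' determined by the positions of $i_k$, $i_{k'}$, and exhibiting explicit reductions via $2$-minors at intermediate cuts; this is presumably where the bulk of the argument in~\cite[Chapter~3]{VBthesis} lies. Finally, minimality follows from the classical minimality of $2$-minors of a matrix of distinct indeterminates within each single $G_{i_k}$ combined with a factor-support argument that rules out cross-cut divisibilities once a coherent term order is fixed, and reducedness follows from the standard pruning argument once the Gröbner basis property is in hand.
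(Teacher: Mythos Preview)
The paper does not supply its own proof of this lemma; it simply states that ``a proof is available in \cite[Chapter~3]{VBthesis}.'' There is therefore no in-paper argument to compare your proposal against.

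Your plan is the natural one and is presumably aligned with what the cited thesis does: invoke Corollary~\ref{cor: binaryTT-as-Segre} to identify $V_{\mathbf{k},\mathbf{r}}$ with a Segre product, match the flattenings $\psi^{i_k-1}$ at the cuts $r_{i_k}=1$ with the $m-1$ consecutive bipartite Segre flattenings, and then verify the Gr\"obner basis property for that Segre ideal. Your observation that $\{i_1,\dots,i_{n-\ell-2t}\}=\{j:r_j=1\}$ is correct (the second set in the definition is indeed contained in the first), as is the claim that these cuts never split a Segre factor; the count $m-1=n-\ell-2t$ with $m$ the number of Segre factors also checks out.

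Where your proposal is honest rather than complete is exactly where you flag it: the phrase ``I would then invoke the classical theorem'' slightly overstates matters. There is no ready-made Gr\"obner basis result in the standard literature for the $2$-minors of only the \emph{consecutive} bipartite flattenings of a general multifactor Segre under precisely the lex and revlex term orders named in the lemma; this is different from, say, the diagonal-order results one finds for a single matrix of indeterminates. The cross-cut S-pair analysis you sketch is the actual content of the proof, and you are right that this is what the reference to \cite[Chapter~3]{VBthesis} is covering. Your outline for minimality and reducedness is fine once that Gr\"obner property is established.
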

\begin{remark}
In general the union above is not a universal lexicographic or universal reverse lexicographic Gröbner basis, see the example below.
\end{remark}
\begin{example}[Segre products of two projective spaces]
Let ${\bf k}=(k_0,k_1)$ and ${\bf r}=(1)$. Then $\mathcal I(V_{\mathbf{k},\mathbf{r}})$ is generated by the $2$-minors of a $k_0\times k_1$ matrix. By \cite[Theorem 3.9]{Restuccia2007OnCC}, these minors form a quadratic universal reverse-lex Gröbner basis, implying strong Koszulity of the coordinate ring. For $k_0,k_1>2$, they are not a universal Gröbner basis. For instance, for $(k_0,k_1)=(3,3)$ the Gröbner fan has 108 maximal cones, but only 96 correspond to reduced Gröbner bases consisting entirely of $2$-minors.
\end{example}
\begin{example}[Segre products of several projective spaces]
For $n>1$, the $2$-minors need not form a reverse-lex Gröbner basis under an arbitrary permutation of the variables. For ${\bf k}=(2,2,2)$ and ${\bf r}=(1)_2$, order the variables as
\[
\psi_{111} \succ \psi_{122} \succ \psi_{212} \succ \psi_{221} \succ \psi_{222} \succ \psi_{121} \succ \psi_{211} \succ \psi_{112}.
\]
The reduced reverse-lex Gröbner basis contains the cubic
$\psi_{222}\,\psi_{121}\,\psi_{211} - \psi_{221}^2\, \psi_{112}$, showing that $2$-minors alone do not form a Gröbner basis in this case.
\end{example}
We record a conjecture that represents a key step towards the proof of the Conjecture~\ref{conj: generating_ideal}.
\begin{conjecture}
Let $\mathbf{k}=(2)_{n+1}$ and $\mathbf{r}=(1,r_2,\dots,r_{n-1},1)$. Then the union
\[
G^{(r_1+1)}_1\ \cup\ \cdots\ \cup\ G^{(r_n+1)}_n
\]
of $(r_i+1)$-minors of the flattenings $\psi^{i-1}$ forms a Gröbner basis of $\mathcal I(V_{\mathbf{k},\mathbf{r}})$ with respect to the reverse-lexicographic order induced by a variable order compatible with the flattenings.
\end{conjecture}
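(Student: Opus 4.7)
The plan is to establish the Gr\"obner basis property by the standard Hilbert function comparison. Write $J:=\operatorname{in}_\prec(G)$ for the monomial ideal generated by the leading terms of $G=\bigcup_{i=1}^n G_i^{(r_i+1)}$. Because the inclusion $J\subseteq \operatorname{in}_\prec(\mathcal I(V_{\mathbf k,\mathbf r}))$ is automatic and gives the pointwise inequality $\dim_{\mathbb C}(\mathbb C[\psi]/J)_d \geq \dim_{\mathbb C}(\mathbb C[V_{\mathbf k,\mathbf r}])_d$, it suffices to prove the reverse inequality on Hilbert functions. This is the familiar route used for Segre varieties and classical determinantal ideals.

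The first step is to describe $J$ explicitly. With a variable order compatible with the row/column structure of each flattening, the classical theorem of Sturmfels (later revisited by Bruns--Conca and Herzog--Trung) shows that for any \emph{single} generic matrix the $t$-minors form a Gr\"obner basis with antidiagonal initial terms. Applied flattening by flattening, this gives
\[
J \;=\; \sum_{i=1}^n J_i,
\]
where $J_i$ is the squarefree ideal generated by the antidiagonal monomials of the $(r_i+1)$-minors of $\psi^{i-1}$. Thus $\mathbb C[\psi]/J$ is a Stanley--Reisner ring, and its standard monomials can in principle be enumerated combinatorially: they are the multisets of multi-indices in $\{1,2\}^{n+1}$ that, for every cut $i$, avoid the chosen antidiagonals of the corresponding $(r_i+1)\times(r_i+1)$ submatrices of $\psi^{i-1}$.

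The main obstacle is counting these standard monomials and matching the count with the Hilbert function of $\mathbb C[V_{\mathbf k,\mathbf r}]$. For a single flattening, the count is given by non-intersecting lattice path enumerations via the Lindstr\"om--Gessel--Viennot lemma, and on the algebraic side by the parametrization through a Grassmannian (or product of Grassmannians as in Section~\ref{sec:birational}). In the TT setting the new difficulty is that the constraints from consecutive flattenings $\psi^{i-1}$ and $\psi^i$ live on the \emph{same} variable tensor $(\psi_{j_0,\ldots,j_n})$, so one needs a simultaneous, coupled non-intersecting path configuration --- one family per cut --- rather than $n$ independent ones. Establishing such a bijection, or equivalently showing that the multigraded Hilbert series of $\mathbb C[\psi]/J$ coincides with that obtained from the parametrization by products of Grassmannians, is the combinatorial heart of the problem.

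My preferred fallback, should the direct combinatorial enumeration be unwieldy, is induction on $n$. The base case $n=1$ is the Sturmfels result for a single matrix. For the inductive step, one specializes or eliminates the outermost slice $\{\psi_{j_0,\ldots,j_{n-1},2}\}$: because $r_n=1$, the conditions from $\psi^{n-1}$ collapse to proportionality of two $2^n$-dimensional blocks, and one can hope to reduce to a TT variety of length $n-1$ with ranks $(1,r_2,\ldots,r_{n-2},1)$ while tracking the contribution to the Hilbert series. In either approach the delicate point, and the reason the statement is only a conjecture, is to verify that S-polynomials between minors drawn from different flattenings $\psi^{i-1}$ and $\psi^{j-1}$ with $i\neq j$ reduce to zero modulo $G$; this is precisely the obstruction that Lemma~\ref{lem: 2minorsGB} sidesteps by removing the 2-minors of flattenings whose rank conditions are redundant, and that the conjecture asserts does not occur in the pure binary case with $r_1=r_n=1$.
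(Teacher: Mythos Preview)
The paper does not prove this statement: it is recorded there as an open \emph{conjecture}, with no proof or proof sketch. So there is nothing to compare against on the paper's side, and your write-up is best read as a proposed strategy rather than a proof. You yourself acknowledge this at the end.

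As a strategy, the Hilbert-function route is the natural one, and your broad outline is sound. Two places, however, are genuine gaps rather than routine steps. First, the sentence ``Applied flattening by flattening, this gives $J=\sum_i J_i$ with $J_i$ the antidiagonal ideal'' hides the main technical issue: the flattenings $\psi^{0},\dots,\psi^{n-1}$ are reshapings of the \emph{same} set of variables, so a single global revlex order need not make the leading term of every minor of every flattening an antidiagonal. Sturmfels' theorem applies to one generic matrix at a time; here you must exhibit one variable order that is simultaneously ``diagonal-compatible'' for all $n$ flattenings and prove that the initial terms are what you claim. This is exactly what ``compatible with the flattenings'' is supposed to encode, and it is not automatic. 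Second, even granting the description of $J$, the standard-monomial count is the entire content of the conjecture: the coupling between constraints at adjacent cuts is precisely what distinguishes the TT case from a product of independent determinantal ideals, and neither your LGV sketch nor the induction on $n$ (``one can hope to reduce'') supplies the missing bijection or Hilbert-series identity. Until one of those is made precise, the proposal remains a plan of attack consistent with the paper's view that the statement is open.
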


\section{Birational Parametrization and TT manifolds} \label{sec:birational}
In this section, we present a one-to-one parametrization of a dense open set of the tensor train variety $V_{\mathbf{k}, \mathbf{r}}$ from a product of Grassmannians defined by
$$
\mathcal Z \coloneq 
\mathrm{Gr}(r_1,k_0) \times \mathrm{Gr}(r_2,r_1 k_1) \times \cdots \times \mathrm{Gr}(r_{n},r_{n-1} k_{n-1}) \times  \mathbb{P}(\C^{r_{n} \times k_n}).
$$
This open set is contained in  the \emph{tensor train manifold}, denoted by $V_{\mathbf{k},\mathbf{r}}^=$, which is the set of tensors in $V_{\mathbf{k},\mathbf{r}}$ whose $i$-th flattening has rank exactly $r_{i+1}$ for all $i = 0, \ldots, n-1$. 
The construction is inspired by the TT-cross algorithm~\cite{Oseledets2010}, itself an adaptation of the TT-SVD~\cite{Grasedyck2010, Oseledets2011} in which successive SVDs are replaced by so‑called skeleton decompositions of matrices~\cite{Tyrtyshnikov2000}.  
Skeleton (or $CUR$) decompositions factor an $m\times n$ matrix $A$ by selecting $r$ linearly independent columns and rows of $A$ as columns and rows of $C \in \C^{m \times r}$ and $R \in \C^{r \times n}$, respectively. In the literature, this decomposition is often generalized to the case where only rows or columns are chosen, resulting in an $XR$-decomposition or $CX$-decomposition, respectively~\cite{Drineas2006}.

In our algorithm, we use a special case of the $XR$-decomposition for a rank-$r$ matrix $A$, under the following assumption.
\begin{assumption}\label{ass:XR}
For a rank‑$r$ matrix \(A \in \mathbb{C}^{m \times n}\), the first \(r\) rows are linearly independent.
\end{assumption}
If Assumption~\ref{ass:XR} holds, the $XR$‑decomposition is defined as:
$$
(X,R) \coloneq \mathtt{XR}(A,r),
$$
where \(R \in \mathbb{C}^{r \times n}\) is formed from the first \(r\) rows of \(A\), and
$X = A R^\dagger (R R^\dagger)^{-1} \in \mathbb{C}^{m \times r}$. Invertibility of \(RR^\dagger\) is guaranteed by Assumption~\ref{ass:XR}. This decomposition satisfies $XR = AR^\dagger(RR^\dagger)^{-1}R = AP$, where $P$ is the orthogonal projection onto the rowspan of $A$. 
Since all rows of $A$ are in the rowspan of $A$, projecting onto the rowspan of $A$ leaves them unchanged, and hence $XR = AP = A$. 
Moreover, $X$ has the form
\begin{equation}\label{eq:Xgr}
X = \begin{bmatrix}
{\rm Id}_r \\
B
\end{bmatrix},
\end{equation}
with \(B \in \mathbb{C}^{(m-r) \times r}\).  
Such matrices parametrize a large open subset of \(\mathrm{Gr}(r,m)\) via their column spans, with the first block fixed to the identity to ensure a one‑to‑one parametrization.

Our algorithm gives a bijection from an open set $\mathcal U \subseteq V_{\mathbf k, \mathbf r}^=$ to an open set $\mathcal W \subseteq \mathcal Z$.
In order to write down our algorithm, we will first need to identify these open subsets.
The description of the open set $\mathcal W$ is straightforward.
Let $W_i$ denote the open set 
 \[
  W_i := \{\mathrm{span}(A) : A \in \mathbb{C}^{r_{i-1}k_{i-1} \times r_i},\ \text{first $r_i$ rows linearly independent}\} \subseteq \mathrm{Gr}(r_i,r_{i-1}k_{i-1})
  \]
for $i = 1,\ldots,n$ and let $\C_\ast^{r_{n} \times k_n} \subseteq \C^{r_{n} \times k_n}$ denote the open set of full rank matrices. 
Then the following set is open in $\mathcal Z$:
\begin{equation}
\mathcal W \coloneq W_1 \times \cdots \times W_{n} \times \C_\ast^{r_{n} \times k_n}.
\end{equation}
The set \(\mathcal{U}\) consists of tensors for which Assumption~\ref{ass:XR} holds in every step of the algorithm; its precise description appears in the proof of Theorem~\ref{thm:param}, where we show it is Zariski dense.

Similar to Section~\ref{sec: TTvarieties}, we denote the $i$-th flattening of a tensor  $\psi \in \bigotimes_{i=0}^n \mathbb{C}^{k_i}$ by 
$$
\psi^i = \mathtt{unfold}(\psi,[k_0 \cdots k_i,k_{i+1} \cdots k_n]) \in \C^{(k_0 \cdots k_i) \times (k_{i+1} \cdots k_n)}
$$
and the tensorization of a matrix $\psi^i  \in \C^{(k_0 \cdots k_i) \times (k_{i+1} \cdots k_n)}$ is denoted by
$$
\psi = \mathtt{tensorize}(\psi^i,[k_0,\ldots,k_n]) \in \bigotimes_{i=0}^n \mathbb{C}^{k_i}.
$$
With this, we are equipped to present the parametrization algorithm for a tensor with given TT-rank $\mathbf r = (r_1,\ldots,r_{n})$. The procedure is similar to the one in Lemma~\ref{lem: another definition of TT}; see Algorithm~\ref{alg:factorization}.

\smallskip

\begin{algorithm}[H]\label{alg:factorization}
\SetKwInOut{Input}{Input}
\SetKwInOut{Output}{Output}
\caption{Factorization $\mathtt{factorize}(\psi,\mathbf{r})$}
\Input{$\psi \in \mathcal{U} \subseteq V_{\mathbf{k},\mathbf{r}}^=$, TT‑rank $\mathbf{r}=(r_1,\dots,r_{n})$}
\Output{$(X_0,\dots,X_n) \in \mathcal{W} \subseteq \mathcal{Z}$}
$r_0=1$, $\phi = \psi$\;
\For{$i=0$ \KwTo $n-1$}{
    $\psi^i = \mathtt{unfold}(\phi, [r_{i}k_i,\, k_{i+1}\cdots k_n])$\;
    $(X_i,R_i) = \mathtt{XR}(\psi^i,r_{i+1})$\;
    $\phi = \mathtt{tensorize}(R_i,[r_{i+1},k_{i+1},\dots,k_n])$\;
}
$X_n = \phi/\phi_{11}$\;
\Return $(X_0,\dots,X_n)$
\end{algorithm}
\begin{remark}
Algorithm~\ref{alg:factorization} works only if the rank conditions in \eqref{eq: kr constraints} are satisfied. 
For any $i$ with $r_i > r_{i-1} k_{i-1}$, similarly to Remark~\ref{rem: min bond dims}, we replace $r_i$ by
$
\min\{r_i,\; r_{i-1}k_{i-1},\; r_{i+1}k_i\}
$ before running the algorithm. 
Note that the procedure can also be performed starting from the right-hand side, in which case one uses $CX$-decompositions instead of $XR$-decompositions.
\end{remark}
We now describe the inverse of Algorithm~\ref{alg:factorization}, namely the
\emph{decompression} procedure that reconstructs a TT tensor from its parametrized
form; see Algorithm~\ref{alg:compression}.

\begin{algorithm}[H]\label{alg:compression}
\SetKwInOut{Input}{Input}
\SetKwInOut{Output}{Output}
\caption{Decompression $\mathtt{decompress}(X_0,\dots,X_n)$}
\Input{$(X_0,\dots,X_n) \in \mathcal{W} \subseteq \mathcal{Z}$}
\Output{$\psi \in \mathcal{U} \subseteq V_{\mathbf{k},\mathbf{r}}^=$}
${\bf r} := (1,r_1,\dots,r_{n},1)$, $\psi := X_n$\;
\For{$i = n-1$ \KwTo $0$}{
    $R_i = \mathtt{unfold}(\psi,[r_{i+1},\, k_{i+1} \cdots k_n])$\;
    $\psi^i = X_i R_i$\;
    $\psi = \mathtt{tensorize}(\psi^i,[r_{i},\, k_i,\dots,k_n])$\;
}
\Return $\psi$
\end{algorithm}
\begin{theorem}
\label{thm:param}
Algorithm~\ref{alg:compression} defines a one-to-one parametrization from $\mathcal Z$ to $V_{\mathbf{k}, \mathbf{r}}^{=}$. 
The restriction $\mathcal Z_{\mathbb R} \to V_{\mathbf{k}, \mathbf{r}, \mathbb R}$ to the real points is birational. 
\end{theorem}
\begin{proof}
 The subset $\mathcal U \subseteq V_{\mathbf k,\mathbf r}^=$ consists of all tensors satisfying Assumption~\ref{ass:XR} for each $XR$-factorization in Algorithm~\ref{alg:factorization}, i.e.,  the first $r_{i+1}$ rows of $\psi^i$ (Line 3) are linearly independent for $i = 0, \ldots, n-1$.
 In other words, the matrix $R_i$ (Line 4) must have full rank, that is, it must have some nonvanishing maximal minor, for all $i = 0, \ldots, n-1$. 
 We argue that this condition is algebraic in the entries of the input tensor $\psi$. 
 
 Indeed, the matrix $R_0$ is defined as the first $r_1$ rows of $\psi^0$, which is just a reshaping of the input tensor $\psi$. 
 Hence all entries of $R_0$ are entries of the input tensor $\psi$. 
 Similarly, since $\psi^1$ and $R_1$ are both flattenings of the same tensor, all entries of $R_1$ are also entries of $R_0$, and thus of the input tensor $\psi$. 
 Proceeding in this manner, we observe that the entries of $R_0, \ldots, R_{n-1}$ are always entries in the original $\psi$.
 Thus the condition that $R_i$ has a nonvanishing maximal minor for $i = 0, \ldots, n-1$ is an algebraic condition in the entries of the input tensor $\psi$. 
 Hence, the open set $\mathcal U$ is Zariski dense in $V_{\mathbf k,\mathbf r}^=$.
 
We now prove that Algorithm~\ref{alg:factorization} is a bijection from the open set $\mathcal U \subseteq V_{\mathbf k,\mathbf r}^=$ to the open set $\mathcal W \subseteq \mathcal Z$ by showing that Algorithm~\ref{alg:compression} is its inverse.
We proceed by induction on $n$. 
For $n = 1$, the claim follows from the $XR$-decomposition. 
 For general $n$, let $(X_0,\ldots,X_n) \in \mathcal W$ and let $\psi = \mathtt{decompress}(X_0,\ldots,X_n)$.
 After the first iteration of the \textbf{for}-loop in Algorithm~\ref{alg:factorization}, we have recovered $X_0$ and $\psi$ is now $\mathtt{decompress}(X_1,\ldots,X_n)$; this is because this first iteration of the \textbf{for}-loop in Algorithm~\ref{alg:factorization}
 undoes the last iteration of the \textbf{for}-loop in Algorithm~\ref{alg:compression}. 
 By induction, the output of $\mathtt{factorize}(\mathtt{decompress}(X_1,\ldots,X_n))$ matches $(X_1, \ldots, X_n)$, and we have reached our conclusion. 
 The argument for the reverse direction is analogous.
\end{proof}

\begin{remark}
We believe that the statement that $\mathcal Z$ parameterizes $V_{{\bf k}, {\bf r}}^=$ is contained in~\cite{Falco2023}, somewhat enigmatically. 
However, to the best of our knowledge, the resulting one-to-one parametrization of tensor train manifolds in Algorithm~\ref{alg:compression} is derived here for the first time.
\end{remark}
The dimension of the manifold $V_{\mathbf k,\mathbf r}^=$ is known in the tensor decomposition literature; see, for example, \cite{Holtz2012b}. It can also be derived from \cite{BBM}, where more general tensor network varieties were studied.
We present it here as a corollary of Theorem~\ref{thm:param}.

\begin{corollary} \label{cor:TT-dim}
The tensor train variety $V_{\mathbf k,\mathbf r}$ has dimension  
$$\mathrm{dim}(V_{\mathbf k,\mathbf r}) = \left( \sum_{i=0}^{n-1} (r_{i}k_i - r_{i+1}) \cdot r_{i+1} \right) + r_nk_n-1.$$
\end{corollary}

\begin{example}
We parametrize the tensor train variety $V_{\mathbf{k},\mathbf{r}} \cong \mathbb P^5 \times \mathbb P^1$. 
We map from the open set $\mathcal W \subseteq \mathrm{Gr}(2,3) \times \mathrm{Gr}(1,4) \times \mathbb P^1$. 
The image of a point
\begin{align*}
    X_0 = 
    \begin{footnotesize}
    \begin{pmatrix}
        1 & 0 & a \\ 0 & 1 & b
    \end{pmatrix}^T,
    \end{footnotesize}
    &&   
    X_1 = 
     \begin{footnotesize}
    \begin{pmatrix}
        1 & c & d & e
    \end{pmatrix}^T,
    \end{footnotesize}
    &&
    X_2 = 
    \begin{footnotesize}
    \begin{pmatrix}
        1 & f
    \end{pmatrix}
        \end{footnotesize}
\end{align*}
in $\mathcal W$ under Algorithm~\ref{alg:compression} is $\psi$ with slices
\begin{align*}
    \psi_{(1,:,:)}  = 
    \begin{footnotesize}
    \begin{pmatrix}
        1 & f \\ c & cf
    \end{pmatrix}
    \end{footnotesize}
    &&
    \psi_{(2,:,:)}  = 
    \begin{footnotesize}
    \begin{pmatrix}
        d & df \\ e & ef
    \end{pmatrix}
    \end{footnotesize}
    &&
    \psi_{(3,:,:)}  = 
    \begin{footnotesize}
    \begin{pmatrix}
        a+bd & (a+bd)f \\ ac+be & (ac+bd)f
    \end{pmatrix}.
    \end{footnotesize}
\end{align*}
We illustrate Algorithm~\ref{alg:factorization} with an explicit numerical example.
Consider the $3 \times 2 \times 2$ tensor $\psi$ of TT-rank $(2,1)$ whose slices are 
\begin{align*}
    \psi_{(1,:,:)} = 
    \begin{footnotesize}
    \begin{pmatrix} 2 & 3 \\ 4 & 6 
    \end{pmatrix}\end{footnotesize}, 
    \quad
    \psi_{(2,:,:)} = 
    \begin{footnotesize}
    \begin{pmatrix} 6 & 9 \\ 8 & 12 \end{pmatrix}\end{footnotesize}, 
    \quad
    \psi_{(3,:,:)} =
    \begin{footnotesize}
    \begin{pmatrix} -2 & -3 \\ -4 & -6 \end{pmatrix}\end{footnotesize}.
\end{align*}
Its first unfolding and decomposition in Algorithm~\ref{alg:factorization} yield
\begin{align*}
    \psi^0 = \begin{footnotesize}\begin{pmatrix}
        2 & 3 & 4 & 6\\
        6 & 9 & 8 & 12\\
        -2 & -3 & -4 & -6
    \end{pmatrix}\end{footnotesize},
    \quad
     X_0 = \begin{footnotesize}\begin{pmatrix}
     1 & 0 \\
     0 & 1\\
     -1 & 0
     \end{pmatrix}\end{footnotesize},
    \quad
    R_0 = \begin{footnotesize}\begin{pmatrix}
    2 & 3 & 4 & 6\\
    6 & 9 & 8 & 12
     \end{pmatrix}\end{footnotesize}.
\end{align*}
We then tensorize $R_0$ to obtain the $2 \times 2\times 2$ tensor $\phi$ with slices
\begin{align*}
    \phi_{(:,:,1)} = 
    \begin{footnotesize}\begin{pmatrix}
       2 & 4 \\ 6 & 8
    \end{pmatrix}\end{footnotesize}, \quad
    \phi_{(:,:,2)} = 
    \begin{footnotesize}\begin{pmatrix}
        3 & 6\\ 9 & 12
    \end{pmatrix}\end{footnotesize}.
\end{align*}
This tensor unfolds and factors as
\begin{align*}
    \psi^1 = \begin{footnotesize}\begin{pmatrix}
    2 & 4 & 6 & 8\\
    3 & 6 & 9 & 12
    \end{pmatrix}^T\end{footnotesize},\quad
    X_1 = \begin{footnotesize}\begin{pmatrix}
        1 & 2 & 3 & 4
    \end{pmatrix}^T\end{footnotesize},\quad
    R_1 = \begin{footnotesize}\begin{pmatrix}
    2 & 3
    \end{pmatrix}\end{footnotesize}.
\end{align*}
Thus the output of the algorithm is 
\begin{align*}
X_0 = 
\begin{footnotesize}\begin{pmatrix}
   1 & 0 & 0 \\ 0 & 1 & -1
\end{pmatrix}^T\end{footnotesize},  \quad
X_1 = 
\begin{footnotesize}\begin{pmatrix}
    1& 2 &3 &4
\end{pmatrix}^T\end{footnotesize}, \quad
X_2 = 
\begin{footnotesize}\begin{pmatrix}
    1 &3/2
\end{pmatrix}\end{footnotesize}.
\end{align*}
\end{example}

\section{Numerical Experiments}\label{sec:numerics}
In our last section, we return to TT varieties and perform numerical computations for some small instances. The primary numerical task is the computation of all complex critical points of \eqref{eqn:optproblem} over a TT variety. For this, we use homotopy continuation and monodromy methods. We report the results of our computations for small determinantal varieties as well as for the first few instances of TT varieties that are Segre products by Theorem \ref{thm:TT-as-Segre}; see Tables~\ref{tab: RRdeg determnantal} and \ref{tab:RRdegtensors}. 
In many of these cases, the RR degrees themselves can be derived from the formulae obtained in \cite{SSW25}; see, in particular, Corollaries 5.2 and 5.3. 
However, we compute not only the degrees, but the actual critical points. 
One of our observations is that the number of local extrema is typically much smaller than the corresponding RR degree. 

We also report the degree of the nonisotropic part of the RR discriminant (see Section~\ref{sec:RR-dis}) for some determinantal and TT varieties; see Tables~\ref{tab:RRdegtensors} and \ref{tab:RR-disc-degs}. The take-away message there is that these degrees are high, implying that the computation of the actual discriminant polynomials are currently out of reach. 

Finally, we try to gauge the quality of the low energy state solutions produced by the DMRG and ALS algorithms by comparing to the complete list of local and global minima produced by our numerical methods. 
We conclude that ALS frequently gets stuck in local minima with suboptimal energy, although this happens less often for physical Hamiltonians; see Table~\ref{tab: ALS M}. The DMRG method does not converge to critical points of~\eqref{eqn:optproblem} at all.

\subsection{Hamiltonians}

In our experiments, whenever we look at a specific Hamiltonian $H$, we consider two types: 

\begin{enumerate}
    \item \textbf{Random symmetric real matrices.} 
    
    We generate a symmetric real matrix $H \in \mathbb{R}^{(k_0 \cdots k_n) \times (k_0 \cdots k_n)}$ by drawing entries from the standard normal distribution and symmetrizing. 
    For the ALS and DMRG experiments, $H$ is then expressed in tensor operator form, see \cite{Oseledets2011}.
    \item \textbf{Random Hamiltonians in second quantization.}  
    
    In second-quantized form, the Schrödinger operator has the structure (see \cite{Szalay2015}):
    \begin{equation}
        H = \sum_{i,j} t_{ij} \mathbf a_i^\dagger \mathbf a_j + \sum_{i,j,k,l} v_{ijkl} \mathbf a_i^\dagger \mathbf a_j^\dagger \mathbf a_k \mathbf a_l,
    \end{equation}
    where $\mathbf a_i^\dagger = \mathbf s \otimes \cdots \otimes \mathbf s \otimes \mathbf a^\dagger \otimes \mathbf I \otimes \cdots \otimes \mathbf I$ and $\mathbf a_i = \mathbf s \otimes \cdots \otimes \mathbf s \otimes \mathbf a \otimes \mathbf I \otimes \cdots \otimes \mathbf I$ are the {\em creation} and {\em annihilation operators}, respectively. Here, we have
    \begin{equation}
        \mathbf s =  \begin{footnotesize}\begin{pmatrix}
            1 & 0 \\
            0 & -1
        \end{pmatrix} \end{footnotesize}, \qquad
        \mathbf a =  \begin{footnotesize}\begin{pmatrix}
            0 & 1 \\
            0 & 0
        \end{pmatrix} \end{footnotesize}, \qquad 
        \mathbf I =  \begin{footnotesize}\begin{pmatrix}
            1 & 0 \\
            0 & 1
        \end{pmatrix} \end{footnotesize}.
    \end{equation}
    The coefficients $t_{ij}, v_{ijkl} \in \mathbb{R}$ for $i,j,k,l \in \{0,1,\dots,n\}$ are chosen randomly from the standard normal distribution. 
    In this case, $\mathbf{k} = (2)_{n+1}$, and $H$ is  symmetric.
\end{enumerate}

\subsection{Homotopy Continuation Computations}
A central task in our setting is to solve polynomial systems that encode the optimization problem \eqref{eqn:opt-param} on tensor trains. 
There are many ways to write down such a system, for example by using Proposition~\ref{prop:Icrit-RR}, Proposition~\ref{prop:RR degree-rational}, or \eqref{eqn:critical-ideal}.

Our main computational tool is \emph{numerical homotopy continuation}, see  \cite{Sommese:Wampler:2005}. This is a numerical method for finding all isolated solutions to a system of polynomial equations. Concretely, let $\bfx=(x_1,\dots,x_n)$ be variables and $\bfp=(p_1,\dots,p_k)$ parameters, and consider a family of polynomial systems
\[
\mathcal F(\bfx;\bfp)=\big(f_1(\bfx;\bfp),\ldots,f_n(\bfx;\bfp)\big)\in\mathbb{C}[\bfx]^n,\quad \text{ with } \; \bfp\in\mathbb{C}^k.
\]
Choose two parameter values $\bfq_1,\bfq_2\in\mathbb{C}^k$ such that
the target system is $F(\bfx)=\mathcal F(\bfx;\bfq_1)$, and
the start system $G(\bfx)=\mathcal F(\bfx;\bfq_2)$ has easily computable solutions.
The \emph{parameter homotopy}
\[
H(\bfx,t):=\mathcal  F\big(\bfx;(1-t)\bfq_1+t\bfq_2\big),\quad t\in[0,1],
\]
deforms $G(\bfx)$ at $t=1$ to $F(\bfx)$ at $t=0$. The parameter continuation theorem \cite{BorovikBreiding, MS1989} guarantees that the number of regular isolated solutions in $\mathbb{C}^n$ is constant along $t\in(0,1]$. Starting from solutions of $G(\bfx)$, we track solution paths to $t=0$ using predictor–corrector schemes that numerically integrate the Davidenko ODE
\[
\big(\partial_{\bfx} H(\bfx,t)\big)\,\dot{\bfx} + \partial_t H(\bfx,t)=0.
\]
If the start system $G(\mathbf{x})$ has at least as many regular isolated solutions as the target $F(\mathbf{x})$ and all solution paths arrive at $t=0$, then we obtain all solutions of $F(\mathbf{x})$. 
When $G(\mathbf{x})$ has exactly as many solutions as $F(\mathbf{x})$, the homotopy is called \emph{optimal}.

Another numerical method for solving polynomial systems is \emph{monodromy}, introduced in~\cite{monodromy1}. 
Starting from a general parameter value $\mathbf{p}_0$ and a set of known solutions to ${\mathcal F(\mathbf{x};\mathbf{p}_0)=0}$, 
one traces each solution along closed loops in the parameter space based at $\mathbf{p}_0$ using parameter homotopy. 
These loops induce permutations of the solution set, and running random loops often uncovers previously unknown solutions. 
When the total number of solutions to a general system in the family is known, monodromy has a stopping criterion. Otherwise, suitable heuristics may be applied. Both methods are implemented in the \texttt{Julia} \cite{Julia} package  \texttt{HomotopyContinuation.jl} \cite{hc}.

We combine the two homotopy continuation methods introduced above to solve the optimization problem~\eqref{eqn:opt-param}, 
in particular to compute RR degrees and to numerically determine the degrees of RR discriminants. 
All algorithms are implemented in our \texttt{Julia} package \texttt{TensorTrainOptimization.jl} using \texttt{HomotopyContinuation.jl} and \texttt{Oscar.jl} \cite{OSCAR}, 
which is provided together with the complete dataset of our experiments at:
\begin{center}
    \url{https://zenodo.org/records/17777072}.
\end{center}
\begin{remark}
The experiments were run on a machine with two 12-core Intel Xeon E5-2680~v3 processors at 2.5 GHz and 512 GB RAM.     
\end{remark}
\begin{table}[h]
\centering
 \footnotesize
\begin{tabular}{|c|c|*{7}{c}|}
\toprule
$r$ & $m \backslash n$ & $2$ & 3 & 4 & 5 & 6 & 7 & 8  \\
\midrule
1 & 2 & 8 & 18 & 32 & 50 & 72 & 98 & 128 \\
1 & 3 & & 61 & 148 & 295 & 518 & 833 & 1256\\
1 & 4 & & & 480 & 1220 & 2624 & 5012 & 8768 \\
1 & 5 & & & &  3881 & 10166 & 23051 & 46856  \\
\midrule
2 & 2 & 4 & 6 & 8 & 10 & 12 & 14 & 16  \\
2 & 3 & & 154 & 448 & 970 & 1784 & 2954 & 4544   \\
2 & 4 & & & 5840 & 24924 & 74775 & 182306 &  386716\\
\midrule
3 & 3 &  & 9 & 12 & 15 & 18 & 21 & 24 \\
3 & 4 & &  & 2368 & 7340 & 17552 & 35755 & 65280  \\
3 & 5 & & & & 460351  & & & \\
\bottomrule
\end{tabular}
\caption{RR degrees of determinantal varieties of $m \times n$ matrices with rank $\leq r$.}
\label{tab: RRdeg determnantal}
\end{table}
\begin{example}[RR degrees]\label{ex: RRdegrees}
We work with the Rayleigh–Ritz optimization problem~\eqref{eqn:optsphere} formulated on the sphere. 
Specifically, we apply Algorithm~\ref{alg:factorization} to birationally parametrize a tensor $\psi$ 
in tensor train format for given $\mathbf{k}$ and $\mathbf{r}$. 
The function \texttt{get\_T} returns this tensor as a vector of polynomials in 
$\dim V_{\mathbf{k}, \mathbf{r}}$ parameters. 
The critical equations of \eqref{eqn:optsphere} form a polynomial system, which we solve via the monodromy method using the entries of $H$ as parameters. 
The monodromy computation produces twice the RR degree many solutions; see Section~\ref{sec:BW}. 

We perform these computations for low-rank matrices up to size~$5$ and for binary tensor train varieties up to order~$6$, 
with results reported in Tables~\ref{tab: RRdeg determnantal} and~\ref{tab:RRdegtensors}.
For instance, the following command computes the first entry of Table~\ref{tab: RRdeg determnantal}:
\begin{verbatim}
get_RRdegree_monodromy([2,2], [1])
\end{verbatim}
The largest number 460351 in Table \ref{tab: RRdeg determnantal} was computed in approximately 205 hours, 
using 29.465~GB of memory. 
\begin{proposition}
    The smallest tensor train variety which is not a Segre product is $V_{{\bf k}, {\bf r}}$ where ${\bf k} = (2)_6$ and ${\bf r} = (1, (2)_3, 1)$.
    Its RR degree is $\geq 691,127$.
\end{proposition}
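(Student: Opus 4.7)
My plan addresses the two claims in turn.

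For the identification of the smallest non-Segre binary tensor train variety, I would combine Corollary~\ref{cor: binaryTT-as-Segre} with the observation that for $k_i = 2$ the rank constraint $\mathrm{rank}(\psi^{i-1}) \le r_i$ is non-trivial only when $r_i < \min(2^i, 2^{n+1-i})$. Corollary~\ref{cor: binaryTT-as-Segre} already forces a Segre product whenever every maximal consecutive block of indices with $r_i > 1$ has length at most two, so a candidate non-Segre variety must exhibit a block of length at least three all of whose rank constraints are effective. Effectiveness at the endpoints $i = 1$ and $i = n$ forces $r_1 = r_n = 1$, since $\psi^0$ and $\psi^{n-1}$ have maximum rank $2$ already. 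Hence any effective length-three block sits at interior positions $(j, j+1, j+2)$ with $j \ge 2$ and $j+2 \le n-1$, which requires $n \ge 5$. In the minimal case $n = 5$ the block lies at $(2,3,4)$, and the bond constraints $r_2 \le r_1 k_1 = 2$ and $r_4 \le r_5 k_4 = 2$ together with each block entry exceeding $1$ pin down $\mathbf{r} = (1,2,2,2,1)$ uniquely.

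To confirm that the resulting variety is genuinely not a Segre product of projective spaces, I would invoke Corollary~\ref{cor:TT-dim} to compute its projective dimension as $13$, sitting in the ambient $\mathbb{P}^{63}$. A Segre embedding $\mathbb{P}^{d_1} \times \cdots \times \mathbb{P}^{d_m} \hookrightarrow \mathbb{P}^{63}$ must satisfy $\prod_i (d_i + 1) = 64$ and $\sum_i d_i = 13$. Running through the multiplicative partitions of $64 = 2^6$ produces dimension sums in $\{6, 7, 8, 9, 10, 11, 14, 17, 18, 32, 63\}$, and the value $13$ is absent. No Segre product of projective spaces in $\mathbb{P}^{63}$ has the correct dimension, ruling out $V_{(2)_6, (1,2,2,2,1)}$ being such a Segre variety.

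For the lower bound on the Rayleigh-Ritz degree, my plan is to use Algorithm~\ref{alg:factorization} to birationally parametrize $V^{=}_{(2)_6, (1,2,2,2,1)}$ by the product $\mathrm{Gr}(1,2) \times \mathrm{Gr}(2,2) \times \mathrm{Gr}(2,4) \times \mathrm{Gr}(2,4) \times \mathrm{Gr}(1,4) \times \mathbb{C}^{1 \times 2}$ of total affine dimension $14$, and then invoke Proposition~\ref{prop:RR degree-rational} to write down the critical ideal $J_{\mathrm{crit}}(V, H)$ in these parameters. Treating the entries of the symmetric matrix $H$ as parameters produces a polynomial family to which the monodromy method in \texttt{HomotopyContinuation.jl} applies exactly as in Example~\ref{ex: RRdegrees}. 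Seeded by a single critical point for a randomly chosen $H_0 \in \mathrm{Sym}^2(\mathbb{R}^{64})$, random loops in parameter space generate permutations of the solution set, and repeating until no new points appear for several consecutive loops yields the stated lower bound. Each solution can be certified as an isolated critical point via the interval-arithmetic tools in the package, so the count of $691{,}127$ stands as a verified lower bound on the RR degree.

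The main obstacle is computational rather than conceptual. The critical system lives in $14$ variables and involves dense high-degree polynomials, while the solution set already contains at least $691{,}127$ points; each monodromy loop requires tracking every known solution along a random deformation, so the total path count grows quickly. Worse, because the RR degree is unknown a priori, there is no rigorous stopping criterion for certifying completeness, and one must rely on the heuristic of repeated loops returning no new solutions. Consequently the statement is phrased as $\ge$; pinning down the exact value would require either monodromy exhaustion with stronger heuristics or a sharper a priori upper bound obtained by adapting Proposition~\ref{prop:degree-of-L(V,H)} to tensor train varieties.
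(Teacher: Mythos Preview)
Your approach to the RR-degree lower bound is essentially identical to the paper's: the paper simply states that the bound was obtained by running the monodromy method of Example~\ref{ex: RRdegrees} and that the computation did not terminate after 60 days, which is exactly your plan together with the same observation about the absence of a rigorous stopping criterion.

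For the minimality claim the paper offers no argument at all, so your treatment goes beyond it. Your dimension-counting device---checking that no multiplicative partition of $64$ yields factor dimensions summing to $13$---is a clean way to certify that $V_{(2)_6,(1,2,2,2,1)}$ is not a Segre product of projective spaces in $\mathbb{P}^{63}$, and it would be a welcome addition.

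Two points in your minimality argument need repair, however. First, the bond constraints do \emph{not} pin down $\mathbf{r}=(1,2,2,2,1)$ uniquely: the inequalities $r_2\le 2$ and $r_4\le 2$ leave $r_3\in\{2,3,4\}$ unconstrained. You must separately handle $r_3=4$ (which \emph{is} a Segre product, namely $\mathbb{P}^{15}\times(\mathbb{P}^1)^2$, since the middle constraint becomes vacuous) and $r_3=3$ (also non-Segre by the same dimension count, but of larger dimension, so $(1,2,2,2,1)$ remains the smallest). Second, your reduction to $r_1=r_n=1$ via ``effectiveness'' tacitly assumes that degenerate bond dimensions are excluded or normalized away. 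Without that convention, $\mathbf{k}=(2)_4$ with $\mathbf{r}=(2,2,2)$ already gives the rank-$\le 2$ locus of $4\times 4$ matrices, a determinantal variety of projective dimension $11$ in $\mathbb{P}^{15}$ that is not a Segre product of projective spaces and has smaller order than six. This is presumably the intended reading in the paper (Table~\ref{tab:RRdegtensors} lists only rank vectors with $r_1=r_n=1$), but you should make the convention explicit rather than leave it implicit in the word ``effective.''
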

We computed this lower bound by running the monodromy method as described above. 
The computation did not terminate after 60 days. 
\end{example}
\begin{table}[h]
\centering
 \footnotesize
\begin{tabular}{|c|c|c|c|c|c|r|}
\toprule
$\mathbf{k}$ & $\mathbf{r}$ & $\mathrm{codim}$ & RR deg & time & Disc.~deg & time\\
\midrule
$(2)_3$ & $(1)_2$    &   5      & 48   & $14s$ & 360& $1.8m$ \\
$(2)_4$ & $(1)_3$    &   12    & 384   & $90s$ & 5438 & $3h$\\
$(2)_4$ & $(1,2,1)$   &   10     & 352   & $59s$ & 6000 & $6.4h$\\
$(2)_5$ & $(1)_4$   &   27 & 3840  & $26m$ &  & \\
$(2)_5$ & $(1,2,1,1)$    &  25   & 4608  & $33m$ && \\
$(2)_5$ & $(1,2,2,1)$ & 22 & 2752  & $1h$ && \\
$(2)_6$ & $(1)_5$  &  58  & 46080 & $24h$ && \\
$(2)_6$ & $(1,2,(1)_3)$  &  56   & 69120  & $49h$ &&\\
$(2)_6$ & $(1,1,2,1,1)$   & 56   & 69120  & $37h$ &&\\
$(2)_6$ & $(1,2,2,1,1)$  &  53   & 67583  & $153h$ &&\\
$(2)_6$ & $(1,2,1,2,1)$  &  55   & 134656  & $78h$ &&\\
\bottomrule
\end{tabular}
\caption{RR degrees and RR discriminant degrees for small tensor train varieties.
}
\label{tab:RRdegtensors}
\end{table}
\begin{example}[Optimization on Tensor Trains] \label{ex: optimizationTT}
The computation proceeds as in Example~\ref{ex: RRdegrees}: 
we first compute twice the RR degree many critical points for a random symmetric Hamiltonian via monodromy, 
and then use parameter homotopy to track them to the critical points of a given Hamiltonian~$H$. 
For random real symmetric $H$, this parameter homotopy is still optimal. 

The function \texttt{TT\_optimization\_monodromy} returns the RR degree, 
the number of real critical points, and vectors of all complex and real solutions. 
Applying \texttt{get\_global\_extrema} to the real solutions yields the global minimum and the corresponding ground state energy. 
The function \texttt{return\_local\_min\_max} identifies all real critical points that are local minima or maxima 
by checking whether the Hessian of the Rayleigh–Ritz quotient is positive or negative definite.
We find that the number of local extrema is typically much smaller than the RR degree.
For example, the following command
\begin{verbatim}
TT_optimization_monodromy([2,2,2,2], [1,2,1]; H = M1, RRdeg = 352)   
\end{verbatim}
computes $352$ critical points for the tensor train variety 
$\mathbb{P}^3 \times (\mathbb{P}^1)^2$ and the given Hamiltonian~$M_1$.
The global minimum value is $-3.5692$, with only $8$ local extrema among the $352$ critical points.
These extremal values, computed using \texttt{return\_local\_min\_max}, are listed in Table~\ref{tab:extremal-values}.
We repeated this for four different real symmetric matrices $M_2$-$M_5$ and observe the same pattern of very few local extrema. When we compute the local extrema of five random Hamiltonians $H_1$-$H_5$ in second quantization, even fewer extrema can be observed. In fact, for $H_1, H_2$ and $H_4$ there is only one global minimum and maximum. For the last matrix $H_5$, we did not obtain any extrema and expect that the solution lies in the complement of the open set $\mathcal W$ defined in Algorithm~\ref{alg:factorization}, so our homotopy algorithm cannot reach it.
All ten matrices in this example are available at the \texttt{Zenodo} website \cite{zenodo_our}.
\end{example}
\begin{table}[h]
\centering
 \footnotesize
\begin{tabular}{|c|l|}
\toprule
$M$ & \textbf{Extremal values} \\ \midrule

$M_1$ & min: $-3.5692$, $-3.2364$, $-2.4927$, $-1.9931$; \\
   & max: $2.4383$, $3.4325$, $3.4346$, $3.4475$ \\ \hline
$M_2$ & min: $-3.0762$, $-3.023$; \\
   & max: $1.1842$, $2.352$, $2.4979$, $2.5777$ \\ \hline
$M_3$ & min: $-3.8648$, $-2.8834$; \\
   & max: $1.7402$, $1.9833$, $2.0849$, $3.2976$ \\ \hline
$M_4$ & min: $-4.2191$, $-2.4659$, $-2.2213$; \\
   & max: $1.9474$, $2.6817$, $3.1394$ \\ \hline
$M_5$ & min: $-4.1527$, $-3.2442$; \\
   & max: $0.7862$, $1.9889$, $2.1986$, $3.7494$, $3.8863$ \\
\bottomrule
\end{tabular}\qquad
\begin{tabular}{|c|l|}
\toprule
$H$ & \textbf{Extremal values} \\ \midrule
$H_1$ & min: $-11.0834$; \\
   & max: $1.2403$ \\ \hline
$H_2$ & min: $-5.7333$; \\
   & max: $8.0021$ \\ \hline
$H_3$ & min: $-2.8502$, $-2.3294$; \\
   & max: $5.4079$, $6.4597$ \\ \hline
$H_4$ & min: $-2.1645$; \\
   & max: $6.0279$ \\ \hline
$H_5$ & min: - \\
   & max: - \\
\bottomrule
\end{tabular}
\caption{Minimum and maximum values on $\PP^3 \times (\PP^1)^2$ for Hamiltonians $M_1$-$M_5$ and $H_1$-$H_5$.}
\label{tab:extremal-values}
\end{table}

Application-driven Hamiltonians are typically very sparse, with many zero entries. Specializing parameters within the family of systems can only decrease the number of complex solutions, so the number of complex critical points for physical Hamiltonians is typically smaller than the RR degree.
\begin{example}[Physical Hamiltonians]
Let ${\bf k} = (2)_3$ and ${\bf r} = (1)_2$. Consider the Hamiltonian
\[
H = 
\begin{footnotesize}
\begin{pmatrix}
0 &     0 &     0 &     0 &     0 &     0 &     0 &     0\\
    0 &    -0.5 & -0.1 &   0 &     0 &     0 &     0 &     0\\
    0 &    -0.1 &  -0.5 &  -0.1 &   0 &     0 &     0 &     0\\
    0 &     0 &    -0.1 &  -0.5 &   0 &     0 &     0 &     0\\
    0 &     0 &     0 &     0 &    -1 &  -0.1 &   0 &     0\\
    0 &     0 &     0 &     0 &    -0.1 &  -1 &  -0.1 &   0\\
    0 &     0 &     0 &     0 &     0 &    -0.1 &  -1 &  0\\
    0 &     0 &     0 &     0 &     0 &     0 &     0 &    -1.5 
\end{pmatrix}.
\end{footnotesize}
\]
The RR degree of the corresponding $V_{\mathbf{k}, \mathbf{r}}$ is 48, and for this particular Hamiltonian \eqref{eqn:optproblem} has 48 critical points. 
However, only 44 of them are in the open set  $\mathcal{W}$ defined in Algorithm \ref{alg:factorization}. 
In particular, the global minimum is one of the four points not in $\mathcal W$.
More computations with physical Hamiltonians are available at the \texttt{Zenodo} website \cite{zenodo_our}.
\end{example}
\begin{example}[RR discriminant degrees]
 We compute the degrees of the nonisotropic component of the RR discriminant from Section~\ref{sec:RR-dis}.
To this end, we model the parametric ramification locus using the Lagrange multiplier equations from Example~\ref{ex: RRdegrees}, augmented by the condition that their Jacobian is singular.
The resulting polynomial system has a positive-dimensional solution set, so we intersect it with a general affine line in the $H$-space parametrized by~$t$.
The projection of the solutions onto the $t$-coordinate yields a linear section of the main component of the RR discriminant, and the number of such points equals its degree.
This allows us to compute the degree of the discriminant without computing the discriminant itself, thereby avoiding a costly symbolic computation.
We report these degrees for higher-order tensors in Table~\ref{tab:RRdegtensors} and for determinantal varieties in Table~\ref{tab:RR-disc-degs}.
\end{example}
\begin{table}[h]
    \centering
     \footnotesize
    \begin{tabular}{|c|c|*{6}{c}|}
\toprule
$r$ & $m \backslash n$ & $2$ & 3 & 4 & 5 & 6 & 7   \\
\midrule
1 & 2 & 24 & 96 & 240 & 480 & 840 & 1344 \\
1 & 3 & & 648 & 2304 & 6000 & 12960 & 24696 \\
\midrule
2 & 3 & & 2430 & 10944 &  &  &     \\
\bottomrule
\end{tabular}
    \caption{Degree of the nonisotropic part of the RR discriminant $m \times n$ matrices of rank $\leq r$.}
    \label{tab:RR-disc-degs}
\end{table}

\subsection{DMRG and ALS}

In practice, the ground state energy of the Hamiltonian $H$ is computed by solving \eqref{eqn:optproblem} using the DMRG method \cite{Szalay2015}. For its derivation, we note that Algorithm~\ref{alg:compression} can be naturally generalized to take any tensor train $(A_0,\ldots,A_n) \in \prod_{i=0}^n \C^{r_i k_i \times r_{i+1}}$ as an input, where $A_i$ is the collection of the $(r_i\times r_{i+1})$ parameter matrices $A_i^{(j_i)}$. We denote this generalized map by
\begin{equation}
    \tau : \prod_{i=0}^n \C^{r_i k_i \times r_{i+1}} \rightarrow \C^{k_0 \times \cdots \times k_n}.
\end{equation}
The DMRG is a two-site version of the ALS \cite{Holtz2012a}, which we  introduce first. A random TT tensor is chosen as an initial guess. In each outer iteration loop (often called a {\em sweep}), every component $A_i$ is updated independently and successively, starting at $A_0$ and ending at $A_n$. Typically, the sweep is then performed backwards (from $A_n$ to $A_0$). These forward and backward sweeps are repeated until convergence.

In the $i$-th step of each sweep, only the component $A_i$ is updated. The algorithm ensures that the left components $A_0,\ldots,A_{i-1}$ are left-unitary, that is, $A_j^\dagger A_j = I_{r_{j+1}}$ for $j = 0,\ldots,i-1$, and the right components $A_{i+1},\ldots,A_n$ are right-unitary: $B_jB_j^\dagger = I_{r_j}$, where $B_j = \mathtt{unfold}(A_j,[r_j,k_jr_{j+1}])$ for $j = i+1,\ldots,n$. The solution of the subproblem 
\begin{equation}\label{eqn:ALSsubproblem}
\tilde A_i = 
\operatorname*{argmin}_{\substack{Z \in \C^{r_i k_i \times r_{i+1}}}}
\frac{
  \langle \tau(A_0,\ldots,Z,\ldots,A_n),\,
          H\,\tau(A_0,\ldots,Z,\ldots,A_n) \rangle
}{
  \langle \tau(A_0,\ldots,Z,\ldots,A_n),\,
          \tau(A_0,\ldots,Z,\ldots,A_n) \rangle
}.
\end{equation}
can be found by computing the eigenvectors of a significantly reduced Hamiltonian using conventional methods. In a forward sweep, $\tilde A_i$ is subsequently left-orthogonalized (for example, via a QR decomposition, setting $A_i \leftarrow Q$) to prepare for the update of $A_{i+1}$.

\begin{example}[Correctness of ALS]
 For the ten matrices from Example \ref{ex: optimizationTT}, we computed all critical points and classified the local minima and maxima (Table \ref{tab:extremal-values}). We then ran the ALS algorithm 1000 times per matrix, and recorded the approximations of the ground energy attained at convergence. For ${\bf k}=(2)_4$ and ${\bf r}=(1,2,1)$, these outcomes are summarized in Table \ref{tab: ALS M}, where in the second column we also list the smallest and largest eigenvalues on the full space. Here, $V_i$ means an approximation to the lowest eigenvalue returned by the ALS method and $C_i$ is the number of runs out of 1000 that produced $V_i$.

\begin{table}[h]\!\!\!
    \centering
    \footnotesize
\begin{tabular}{|c|c|c|c|c|c|c|c|c|c|c|}
\toprule
$M$ & Extremal eigenvalues  & $V_1$ & $C_1$ & $V_2$ & $C_2$ & $V_3$ & $C_3$ & $V_4$ & $C_4$ \\ \midrule
1 & -4.7543, 2.9994 & -3.5692 & 336 & -3.2364 & 332 & -2.4927 & 204  & -1.9931 & 128 \\
2 & -4.6467, 2.5529 & -3.0231 & 568  & -3.0762 & 432 &  &  &  &  \\
3 & -5.4243, 3.3603 & -3.8648 & 616 & -2.8834 & 384 &  &  &  &  \\
4 & -5.6478, 3.0117 & -4.2191 & 710 & -2.2213 & 146 & -2.4659 & 144  &  &  \\
5 & -4.7984, 3.9884 & -4.1527 & 628 & -3.2443 & 372 &  &  &  &  \\
\bottomrule
\end{tabular}
    \caption{ALS statistics for $\mathbf{k} = (2,2,2,2)$ and $\mathbf{r} = (1,2,1)$ for symmetric matrices $M_1$-$M_5$.}
    \label{tab: ALS M}
\end{table}

Comparing Tables \ref{tab:extremal-values} and \ref{tab: ALS M} shows that ALS can converge to any local minimum with nonzero probability. Notably, the global minimum is not necessarily the most frequently observed outcome (see the second row of Table \ref{tab: ALS M}) and its energy can be very far from the ground state energy in the full space.
Moreover, as the problem size increases, the number of local minima increases, and hence, empirically, ALS is increasingly likely to become trapped in a spurious local minimum. 
However, for physical Hamiltonians in second quantization, the spectrum typically contains fewer local minima, and ALS correspondingly performs better. For the matrices $H_1, H_2$ and $H_4$, there is only one global minimum, resulting in a 100\% success rate of ALS. Experiments for larger cases are available online at \cite{zenodo_our}.
\end{example}

Since the initial guess determines the size of each component, the ALS algorithm is automatically constrained to the TT manifold $V_{\mathbf k, \mathbf r}^=$ and the TT rank remains fixed. In contrast, the DMRG method is rank adaptive. The initial guess is typically chosen to have TT rank~$\bf 1$, i.e., $r_0 = \cdots = r_n = 1$. Here, all but two neighboring components (say $A_i$ and $A_{i+1}$) are fixed and orthogonalized as above. The subproblem~\eqref{eqn:ALSsubproblem} is then solved for these two components simultaneously and {\em without imposing the any constraint on $r_{i+1}$}. This yields a merged double component $\tilde A_{i,i+1} \in \C^{r_ik_i \times k_{i+1}r_{i+2}}$ that is subsequently decomposed to produce new $A_i$ (left-orthogonal) and $A_{i+1}$. This is typically done using a truncated SVD. The updated rank $r_{i+1}$ is chosen to meet prescribed error tolerances.

In theory, for a general initial guess and without rank restrictions, the DMRG method will eventually find the global optimum of the unconstrained Rayleigh quotient. However, for large systems, a maximum rank must be chosen to maintain computational feasibility. Once this maximum rank is reached, the DMRG suffers from the same issues as ALS: it might get stuck in local minima whose quality (in terms of the resulting energy) is unknown. 

In our experiments, we have observed another issue with the DMRG. Since we want to compare the output of the DMRG with critical points on the manifold $V_{\mathbf k, \mathbf r}^=$, we must truncate the ranks to the maximum rank $\mathbf r$ in each step. While the solution to the subproblem is optimal in terms of energy, the rank truncation of the solution is not. 
The truncated SVD yields the optimal approximation in the Frobenius norm but not in energy. Hence, the DMRG does not converge to critical points of the Rayleigh quotient on the manifold, but to other points with suboptimal energy. 
To address this comparison, we ran DMRG 1000 times for five random symmetric
matrices $D_1,\ldots,D_5$ and compared the resulting energies with the local
minima computed with \texttt{HomotopyContinuation.jl}.  The DMRG analog of Table~\ref{tab: ALS M} can be found under the link \cite{zenodo_our}. The results are summarized in
Table~\ref{tab:dmrg_summary}.
\begin{table}[ht]
\centering
\small
\begin{tabular}{|c|c|c|}
\toprule
Matrix & Global minimum & Best DMRG energy
       \\
\midrule
$D_1$ & $-2.832$  & $-2.8318$
 \\
$D_2$ & $-2.059$  & $-2.0574$
\\
$D_3$ & $-3.7147$ & $-3.5123$ \\
$D_4$ & $-1.9487$ & $-1.8445$ \\
$D_5$ & $-2.3341$ & $-2.2884$ \\
\bottomrule
\end{tabular}
\caption{DMRG energies for five random symmetric matrices
$D_1,\ldots,D_5$, compared with the global minimum 
computed with \texttt{HomotopyContinuation.jl}.}
\label{tab:dmrg_summary}
\end{table}

It is possible to replace the truncated SVD with a more advanced rank truncation that finds the best low-rank approximation in terms of the energy in each step (see \cite{Krumnow2021}), but this exceeds the scope of this article.

\begin{center}
 \textbf{Acknowledgments}
\end{center}
We are grateful to Bernd Sturmfels for introducing the authors. 
We thank Flavio Salizzoni, Luca Sodomaco, and Julian Weigert for many helpful conversations and for sharing their notes on RR degrees with us. 
We thank Otto Schmidt for interesting discussions and for providing several physical Hamiltonians for our experiments. Finally, we thank anonymous MEGA referees for their helpful comments, which helped to improve the presentation of the~paper.

VB acknowledges support
from the European Research Council \begin{footnotesize} (UNIVERSE PLUS, 101118787). \end{footnotesize}
$\!\!$ Views and opinions expressed
are however those of the authors only and do not necessarily reflect those of the European Union or the 
European
Research Council Executive Agency. Neither the European Union nor the granting authority
can be held responsible for them.

\begin{small}
\bibliographystyle{plain}
\bibliography{june18-arxiv}
\end{small}
\bigskip \medskip \bigskip

\noindent
\footnotesize {\bf Authors' addresses:}
\smallskip

\noindent Max Planck Institute for Mathematics in the Sciences Leipzig, Germany \hfill {\tt  borovik@mis.mpg.de}

\noindent University of California Berkeley, USA \hfill {\tt  hannahfriedman@berkeley.edu}

\noindent San Francisco State University, USA \hfill {\tt  serkan@sfsu.edu}

\noindent Georg-August-Universität Göttingen, Germany \hfill {\tt  
m.pfeffer@math.uni-goettingen.de}

\end{document}